\newtheorem{prop}{Proposition}
\newtheorem{lemma}{Lemma}
\newtheorem{corollary}{Corollary}
\newtheorem{theorem}{Theorem}
\newtheorem{remark}{Remark}
\def\real{{\mathord{{\rm I\kern-2.8pt R}}}}        % Fake blackboard bold R.
\def\inte{{\mathord{{\rm I\kern-2.8pt N}}}}
\def\sZZ{{\rm Z\kern-2.8ptem{}Z}}
\def\z{{\mathchoice
		{\sZZ}
		{\sZZ}
		{\rm Z\kern-0.30em{}Z}
		{\rm Z\kern-0.25em{}Z} }}
\def\sQQ{{\kern 0.27em \vrule height1.45ex width0.03em depth0em
		\kern-0.30em \rm Q}}
\def\qu{{\mathchoice
		{\sQQ}
		{\sQQ}
		{\kern 0.225em \vrule height1.05ex width0.025em depth0em \kern-0.25em \rm Q}
		{\kern 0.180em \vrule height0.78ex width0.020em depth0em \kern-0.20em \rm Q}
}}
\def\sCC{{\kern 0.27em \vrule height1.45ex width0.03em depth0em
		\kern-0.30em \rm C}}
\def\complex{{\mathchoice
		{\sCC}
		{\sCC}
		{\kern 0.225em \vrule height1.05ex width0.025em depth0em \kern-0.25em \rm C}
		{\kern 0.180em \vrule height0.78ex width0.020em depth0em \kern-0.20em \rm C}
}}
\newcommand{\N}{\mathbb{N}}
\newcommand{\ba}{\begin{array}}
	\newcommand{\ea}{\end{array}}
\newcommand{\be}{\begin{equation}}
	\newcommand{\ee}{\end{equation}}
\newcommand{\bea}{\begin{eqnarray}}
	\newcommand{\eea}{\end{eqnarray}}
\newcommand{\beaa}{\begin{eqnarray*}}
	\newcommand{\eeaa}{\end{eqnarray*}}
\newcommand{\eps}{\varepsilon}
\def\z{\zeta}
\font\tenmath=msbm10 \font\sevenmath=msbm7 \font\fivemath=msbm5
\def\math{\fam\mathfam}
\def \={{\buildrel {\rm (law)} \over =}}
\def \N{{\math N}}
\def\qed{ \hfill \vrule width.25cm height.25cm depth0cm\smallskip}
\newcommand{\basa}{\begin{assumption}}
	\newcommand{\easa}{\end{assumption}}
\newcommand{\bas}{\begin{assum}}
	\newcommand{\eas}{\end{assum}}
\newcommand{\ignore}[1]{}
\begin{document}
	
	\renewcommand{\thefootnote}{\fnsymbol{footnote}}
	
	\renewcommand{\thefootnote}{\fnsymbol{footnote}}

	\title{Multidimensional Stein method and quantitative asymptotic independence}
	
	\author{Ciprian A. Tudor \vspace*{0.2in} \\
		CNRS, Universit\'e de Lille \\
		Laboratoire Paul Painlev\'e UMR 8524\\
		F-59655 Villeneuve d'Ascq, France.\\
		\quad ciprian.tudor@univ-lille.fr\\
		\vspace*{0.1in} }
	
	\maketitle

	\begin{abstract}
		If $ \mathbb{Y}$ is a random vector in $\mathbb{R} ^{d}$, we  denote by $ P_{\mathbb{Y}}$ its probability distribution. Consider a random variable $X$ and a $d$-dimensional random vector $\mathbb{Y}$. Inspired by \cite{Pi}, we develop a multidimensional  Stein-Malliavin calculus  which allows to measure the Wasserstein distance between the law $ P_{ (X, \mathbb{Y})}$ and the probability distribution $ P_{Z}\otimes P_{ \mathbb{Y}}$, where $Z$ is a Gaussian random variable. That is, we give estimates, in terms of the Malliavin operators, for the distance between the law of the random  vector $(X, \mathbb{Y})$ and the law of the vector $(Z, \mathbb{Y})$, where $Z$ is Gaussian and independent of $ \mathbb{Y}$.
		Then we focus on the particular case of random vectors in Wiener chaos and we give an asymptotic version of this result. In this situation, this variant of the Stein-Malliavin calculus has strong and unexpected consequences. 	Let $ (X_{k}, k\geq 1)$ be a sequence of random variables in the $p$th Wiener chaos ($p\geq 2$), which converges in law, as $k\to \infty$, to the Gaussian distribution $ N(0, \sigma ^{2})$.  Also consider  $(\mathbb{Y}_{k}, k\geq 1)$ a $d$-dimensional random sequence converging in $L ^{2}(\Omega)$, as $k\to \infty$, to an arbitrary random vector $\mathbb{U}$ in $\mathbb{R}^{d}$ and assume that the two sequences are asymptotically uncorrelated. We prove that, under very light assumptions on $\mathbb{Y}_{k}$, we have the joint convergence of $ ((X_{k}, \mathbb{Y}_{k}), k\geq 1)$ to $ (Z, \mathbb{U})$ where $  Z\sim N(0, \sigma ^{2})$ is independent of $\mathbb{U}$. These assumptions are automatically satisfied when the components of the vector $\mathbb{Y}_{k}$ belong to a finite sum of Wiener chaoses or when $ \mathbb{Y}_{k}=Y$ for every $k\geq 1$, where $\mathbb{Y} $ belongs to the Sobolev-Malliavin space $\mathbb{D} ^{1,2}$.
	\end{abstract}
	
	\vskip0.3cm
	
	{\bf 2010 AMS Classification Numbers:}  60F05,60G15,60H05,60H07.

	\vskip0.3cm
	
	{\bf Key words:} Stein's method, Malliavin calculus, multiple stochastic integrals, asymptotic independence.
	
	\section{Introduction}
	
	The Stein's method constitutes a collection of mathematical techniques that allow to give quantitative  bounds for  the distance   between the probability distributions of  random variables. It has been initially introduced in the paper \cite{Stein} and then developed by many authors. We refer, among many others to the monographs and surveys \cite{CS}, \cite{Re},  \cite{Ross}, \cite{Stein1} for a detailed description of this method. Of particular interest is the situation when one random variable is Gaussian, but the cases of other target distributions have been analyzed in the literature. 
	
	A more recent theory is the so-called Stein-Malliavin calculus which combines the Stein's method with the techniques of the Malliavin calculus. The first work in this direction is \cite{NP1} (see \cite{NP-book} for a more detailed exposition) and since, numerous authors extended, refined or applied this theory. In this theory, the bounds obtained for the distance between the law of an arbitrary random variable and the target distribution are given in terms of the Malliavin operators. 
	
	The starting point of the  Stein's method for normal approximation is the following observation: $ Z\sim N(0, \sigma ^{2})$ with $\sigma >0$ if and only if 
	\begin{equation*}
		\sigma ^{2} \mathbf{E} f'(Z)- \mathbf{E} Z f(Z) =0
	\end{equation*}  
	for every absolutely continuous  function $ f: \mathbb{R} \to \mathbb{R}$ such that $ \mathbf{E} \vert f'(Z)\vert <\infty$. Then, one can think that if a random variable $X$ has the property that $	\sigma ^{2} \mathbf{E} f'(X)- \mathbf{E} X f(X) $  is close to zero  for a large class of functions $f$, then the probability distribution of $X$ should be close to $ N(0, \sigma ^{2})$.  From this observation, the whole Stein's theory has been constructed, leading to various bounds for the distance between the probability law of the random variable $X$ and the normal distribution $ N(0, \sigma ^{2})$. 
	
	In this work, we deal with a variant of this method recently developed in the  reference \cite{Pi} that allows to measure the distance between the components of a random vector $(X_{1}, X_{2})$, where $ X_{1} \sim N(0, \sigma ^{2})$ and $X_{2}$ has an arbitrary distribution. The nice observation made in \cite{Pi} is that  $X_{1} \sim N(0, \sigma ^{2})$  and  $X_{1}$ is independent of $X_{2}$ if and only if 
	\begin{equation*}
		\sigma ^{2} \mathbf{E}\partial _{x_{1}}f(X_{1}, X_{2})-\mathbf{E} X_{1}f(X_{1}, X_{2})=0
	\end{equation*} 
	for a large class of differentiable functions $ f: \mathbb{R} ^{2} \to \mathbb{R}$. We denoted by $\partial _{x_{1}}f$ the partial derivative of $f$ with respect to its first variable.  As in the standard Stein's method, one follows the intuition that if some random vector $ (X_{1}, X_{2})$ satisfies that $	\sigma ^{2} \mathbf{E}\partial _{x_{1}}f(X_{1}, X_{2})-\mathbf{E}X_{1}f(X_{1}, X_{2})$ is close to zero, then $X_{1}$ should be close in law to $ Z\sim N(0, \sigma ^{2})$ and $ P_{(X_{1}, X_{2})}$ should be close to $ P_{Z}\otimes P_{X_{2}}$. By combining this idea with Malliavin calculus, in \cite{Pi} one gives bounds for the Wasserstein distance between  $P_{(X_{1}, X_{2})}$  and  $ P_{X_{1}}\otimes P_{X_{2}}$ in terms of the Malliavin operators. 
	
	Our purpose is, in a first step, to generalize the above idea by considering random vectors of arbitrary dimension. This extension of the Stein's method combined with Malliavin calculus allows to obtain the following estimate: if $ X \in \mathbb{D} ^{1,2}$ and $ \mathbb{Y}= (Y_{1},..., Y_{d})$ is such that $ Y_{j}\in \mathbb{D} ^{1,2}$ for all $j=1,...,d$, then (we denote by $d_{W}$ the Wasserstein  distance  and $Z\sim N(0, \sigma ^{2})$)
	\begin{equation}\label{28i-1}
		d_{W}\left( P _{ (X, \mathbb{Y})}, P_{Z} \otimes P_{\mathbb{Y}}\right) \leq C\left[ \mathbf{E} \left| \sigma ^{2}-\langle D(-L) ^{-1} X, DX\rangle_{H} \right| + \mathbf{E} \sum_{j=1}^{d} \left| \langle D(-L) ^{-1} X, DY_{j} \rangle_{H} \right| \right], 
	\end{equation}
	with $C>0$. We denoted by $D, L$ the Malliavin derivative and the Ornstein-Uhlenbeck operator with respect to an isonormal process $(W(h), h\in H)$, where $(H, \langle \cdot, \cdot\rangle_{H} ) $ is a real and separable Hilbert space. 
	
	Then, we focus on the particular case of sequences of random variables belonging to a Wiener chaos and we give asymptotic-type results.  We will here show that the convergence of a sequence of multiple stochastic integrals to the Gaussian law has other strong and unexpected consequences. Let $H$ be an Hilbert space and let  $I_{p}$ denote the multiple integral of order $p\geq 1$ with respect to an isonormal process $(W(h), h\in H)$. Assume that $p\geq 2$ is an integer number  and for every $k\geq 1$, $X_{k}=I_{p} (f_{k})$ where $ f_{k}\in H ^{\otimes p}$ are symmetric functions.  Suppose that 
	\begin{equation*}
		X_{k} \to ^{(d) } _{k\to \infty} Z \sim N(0, \sigma ^{2}),
	\end{equation*}
	where $ \sigma>0$ and $"\to ^{(d)} "$ stands for the convergence in distribution. Then the following facts hold true: 
	\begin{itemize}
		\item If $ \mathbb{Y}=(Y_{1},...,Y_{d})$ is a $d$-dimensional random vector with components in the Malliavin-Sobolev space $\mathbb{D} ^ {1,2}$ and $X_{k}, \mathbb{Y}$ are asymptotically uncorrelated \\ (i.e. $\mathbf{E} X_{k}  Y_{j}\to _{k\to \infty}0$ for every $j=1,...,d$), then 
		\begin{equation*}
			(X_{k}, \mathbb{Y})\to ^{(d)} _{k\to \infty} (Z',  \mathbb{Y}),
		\end{equation*}
		with $Z'\sim N(0, \sigma ^{2})$ independent of $ \mathbb{Y}$. 
		\item  Let  $ \left( \mathbb{Y}_{k}= (Y_{1,k},...,Y_{d,k}), k\geq 1\right)$ be  a sequence of random vectors such that each component belongs to the  sum of the first $q$th  Wiener chaoses with $q\leq p$ and $\mathbb{Y}_{k} \to ^{(d)} \mathbb{U}$ ($\mathbb{U}$ is an arbitrary random vector). Then, if $X_{k}, \mathbb{Y}_{k}$ are asymptotically uncorrelated (i.e. for every $j=1,...,d$, $\mathbf{E}X_{k}Y_{j,k}\to _{k\to \infty}0$), then
		\begin{equation*}
			(X_{k}, \mathbb{Y}_{k})\to ^{(d)} _{k\to \infty} (Z', \mathbb{U}),
		\end{equation*}
		where $Z'\sim N(0, \sigma ^{2})$ and $Z',\mathbb{U}$ are independent. 
		
		\item  Let  $ \left( \mathbb{Y}_{k}= (Y_{1,k},...,Y_{d,k}), k\geq 1\right)$ be  a sequence of random vectors such that each component belongs to $\mathbb{D} ^{1,2}$ and satisfies an additional (pretty natural) condition (assumption (\ref{cc4}) in Theorem \ref{tt2}).  Suppose that  $\mathbb{Y}_{k} \to  \mathbb{U}$ in $ L ^{2}(\Omega)$, with $\mathbb{U}$ is an arbitrary $d$-dimensional random vector, and $X_{k}, \mathbb{Y}_{k}$ are asymptotically uncorrelated. Then 
		\begin{equation*}
			(X_{k}, \mathbb{Y}_{k})\to ^{(d)} _{k\to \infty} (Z', \mathbb{U}),
		\end{equation*}
		where $Z'\sim N(0, \sigma ^{2})$ and $Z', \mathbb{U}$ are independent. 
		
		\item If $(Y_{k}, k\geq 1)$ is random sequence in the $q$th Wiener chaos with $q>p$ which converges only in law to $U$, then the joint convergence of $((X_{k}, Y_{k}), k\geq 1)) $ to $(Z, U)$ with $Z, U$ independent does not hold. See the counter-example in Section \ref{counter}.
	\end{itemize}
	These findings may have direct consequences to statistics and limit theorems since many estimators can be expressed as multiple stochastic integrals (see e.g. \cite{T}). The main idea of the proof consists in combining the Fourth Moment Theorem with the multidimensional Stein-Malliavin bound (\ref{28i-1}), and it also involves some interesting technical lemmas (Lemmas \ref{llkey1} and \ref{llkey1}), which may have their own interest.  Let us emphasize that the assumption $p\geq 2$ is crucial. When $p=1$, we cannot expect to have results as those listed above. Indeed, take $X =I_{1}(h)$ with $h\in H, \Vert h\Vert =1$, so $X\sim N(0,1)$. Then $Y= I_{1}(h) ^{2}-1= I_{2} (h^{\otimes 2})$ is an element of the second Wiener chaos, but $X$ and $Y$ are not independent (see e.g. the independence criterion in \cite{UZ}).
	
	We organized the paper as follows. In Section 2, we develop  in a multidimensional context the variant of the Stein-Malliavin calculus introduced in \cite{Pi}. Section 3 contains the statement of our main result concerning the asymptotic independence on Wiener chaos and a short discussion around it and its consequences. Section 4 contains the proof of the main result, which is detailed into several steps. In Section 5 we included several applications of our theory, while Section 6 is the the appendix where we present the basic tools needed throughout our work.

	\section{Multidimensional Stein method}\label{sec2}
	In this paragraph, we generalize the variant of the Stein's method introduced in Section 5 of \cite{Pi} to any dimension $d\geq 1$. Then, we combine it with the techniques of the Malliavin calculus in order to obtain the estimate (\ref{28i-1}).

	\subsection{The method}
	
	The basis of the Stein's method consists in the definition of the Stein's operator and of the Stein's equation. For the normal approximation, the standard operator is
	\begin{equation*}
		\mathcal{L}f(x) = \sigma ^{2} f'(x)-xf(x), \hskip0.3cm x\in \mathbb{R},
	\end{equation*}
	which acts on suitable differentiable functions $f: \mathbb{R} \to \mathbb{R}$. This operator satisfies $\mathbf{E} \mathcal{L} f(Z)=0$  for every $f: \mathbb{R}\to \mathbb{R}$ differentiable with $\mathbf{E} \vert f'(Z) \vert < \infty$ if  and only if $Z\sim N(0, \sigma ^{2})$. The corresponding Stein's equation is
	\begin{equation*}
		\mathcal{L} f(x)= \mathbf{E} h(x)-\mathbf{E}h(Z), \hskip0.3cm x\in \mathbb{R},
	\end{equation*}
	where $h:\mathbb{R} \to \mathbb{R}$ is a given function such that $\mathbf{E}\vert h(Z)\vert <\infty$. The idea of the Stein's method is to find a solution $f_{h}$ to the Stein's equation with nice properties and to use it in order to obtain estimates for $\mathbf{E} h(X)- \mathbf{E} h(Z)$ for an arbitrary random variable $X$.
	
	We follow the same line in a multidimensional context. Now, the purpose is not the normal approximation but to quantify the distance between the probability distribution of a random vector $ (X, \mathbb{Y}) $ and the random vector $(Z, \mathbb{Y}) $ where $Z$ is a centered Gaussian random variable with variance $\sigma ^{2}$ and it is independent of $\mathbb{Y}$.

	Let us consider the operator $\mathcal{N}$ given by
	\begin{equation}
		\label{N}
		\mathcal{N} f (x, \mathbf{y})= \sigma ^{2} \partial_{x} f (x, \mathbf{y})-x f( x,\mathbf{y}), \hskip0.3cm x \in \mathbb{R}, \mathbf{y} \in \mathbb{R} ^{d},
	\end{equation}
	where $\partial _{x_{1}}f$ denotes the partial derivative of $f$ with respect to its first variable. The operator $\mathcal{N}$ acts on the set of differentiable functions $f: \mathbb{R} ^{d+1} \to \mathbb{R}$.
	
	Recall that if  $\mathbb{Y}$ is a random vector, we denote by $P_{\mathbb{Y}}$ its probability distribution. The following two lemmas show that the operator (\ref{N}) characterizes the law of $X$ and the independence of $X$ and $\mathbb{Y} $. The material from this section is inspired from Section 5 in \cite{Pi}.
	
	\begin{lemma}
		Assume $X\sim N(0, \sigma ^{2})$ and $X$ is independent of the random vector $ \mathbb{Y}$. Then  $ \mathbf{E}\mathcal{N}f( X, \mathbb{Y})= 0$ for all $f:\mathbb{R} ^{d+1} \to \mathbb{R} $  differentiable with $\mathbf{E} \vert \partial _{x }f(X, \mathbb{Y}) \vert <\infty$.
	\end{lemma}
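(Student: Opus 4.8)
The plan is to condition on $\mathbb{Y}$ and reduce the identity to the one–dimensional Stein characterization recalled in the Introduction. First I would fix a version of the regular conditional distribution of $X$ given $\mathbb{Y}$; since $X$ is independent of $\mathbb{Y}$, this conditional law is $N(0,\sigma^2)$ for $P_{\mathbb{Y}}$–almost every $\mathbf{y}\in\mathbb{R}^d$. For such a fixed $\mathbf{y}$, set $g_{\mathbf{y}}(x)=f(x,\mathbf{y})$, which is absolutely continuous in $x\in\mathbb{R}$ with $g_{\mathbf{y}}'(x)=\partial_x f(x,\mathbf{y})$.

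Next I would take care of the integrability bookkeeping. By Fubini's theorem together with independence, for $Z\sim N(0,\sigma^2)$ one has $\mathbf{E}|\partial_x f(X,\mathbb{Y})|=\int_{\mathbb{R}^d}\big(\mathbf{E}|g_{\mathbf{y}}'(Z)|\big)\,P_{\mathbb{Y}}(d\mathbf{y})<\infty$, so that $\mathbf{E}|g_{\mathbf{y}}'(Z)|<\infty$ for $P_{\mathbb{Y}}$–a.e. $\mathbf{y}$. For each such $\mathbf{y}$ the ``only if'' part of the one–dimensional characterization stated in the Introduction applies to $g_{\mathbf{y}}$, yielding both $\mathbf{E}|Z\,g_{\mathbf{y}}(Z)|<\infty$ and
\[
\sigma^2\,\mathbf{E}\,g_{\mathbf{y}}'(Z)-\mathbf{E}\,Z\,g_{\mathbf{y}}(Z)=0 .
\]

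Finally, using independence again to identify the conditional expectation given $\mathbb{Y}=\mathbf{y}$ with the expectation against the $N(0,\sigma^2)$ law of $X$, the left–hand side above is precisely $\mathbf{E}\big[\mathcal{N}f(X,\mathbb{Y})\mid\mathbb{Y}=\mathbf{y}\big]$. Hence this conditional expectation vanishes $P_{\mathbb{Y}}$–a.s., and the tower property gives $\mathbf{E}\,\mathcal{N}f(X,\mathbb{Y})=\mathbf{E}\big[\mathbf{E}[\mathcal{N}f(X,\mathbb{Y})\mid\mathbb{Y}]\big]=0$, the outer expectation being well defined because $\mathbf{y}\mapsto\mathbf{E}|Z\,g_{\mathbf{y}}(Z)|$ is $P_{\mathbb{Y}}$–integrable by the same Fubini argument. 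The only real obstacle is this measure–theoretic care — existence of the regular conditional law, the Fubini exchange, and the induced integrability of $X f(X,\mathbb{Y})$ — since the analytic content is just the classical scalar Stein identity applied slice by slice.
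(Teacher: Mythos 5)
Your proposal is correct and is essentially the paper's argument: freeze $\mathbf{y}$, apply the scalar Stein identity to $x\mapsto f(x,\mathbf{y})$, and then average over the law of $\mathbb{Y}$ using independence. The paper phrases the averaging as a Fubini computation against the product measure $P_X\otimes P_{\mathbb{Y}}=P_{(X,\mathbb{Y})}$ (citing Lemma 2.1 of Ross for the integrability), while you phrase it via the regular conditional distribution and the tower property; these are the same argument in different notation, and your handling of the integrability of $Xf(X,\mathbb{Y})$ is if anything slightly more explicit.
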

	
	\begin{proof}By the standard Stein method, for all $\mathbf{y}   \in \mathbb{R}^{d} $,
		\begin{equation*}
			\sigma ^{2} \mathbf{E} \partial _{x} f ( X, \mathbf{y})=\mathbf{E} Xf(X, \mathbf{y})
		\end{equation*}
		or
		\begin{equation*}
			\sigma ^{2} \int_{\mathbb{R}} \partial _{x} f (x, \mathbf{y}) dP _{X}( x)= \int_{\mathbb{R}} x f (x, \mathbf{y}) d P_{ X}(x).
		\end{equation*}
		Let us integrate with respect to the probability measure $P_{\mathbb{Y}}$. We have (the use of Fubini's theorem is based on Lemma 2.1 in \cite{Ross})
		\begin{eqnarray*}
			&&\sigma ^{2} \int_{\mathbb{R} ^{d}} \left( \int_{\mathbb{R}} \partial _{x} f (x, \mathbf{y}) dP _{X}( x)	\right) dP_{ \mathbb{Y}}(\mathbf{y}) \\
			&=&\sigma ^{2} \int_{\mathbb{R} ^{d+1}}  \partial _{x} f (x, \mathbf{y}) dP _{ X} (x) \otimes  dP_{ \mathbb{Y}}(\mathbf{y}) \\
			&=&\sigma ^{2} \int_{\mathbb{R} ^{d+1}}  \partial _{x} f (x, \mathbf{y}) dP _{ (X, \mathbb{Y})} (x, \mathbf{y}) =\sigma ^{2} \mathbf{E} \partial _{ x} f ( X, \mathbb{Y}),
		\end{eqnarray*}
		where we used the independence of $ X$ and $ \mathbb{Y}$ for the first equality on the above  line. Similarly,
		\begin{eqnarray*}
			&& \int_{\mathbb{R} ^{d}}\left( \int_{\mathbb{R}} x f (x, \mathbf{y} ) d P_{ X}(x)\right)dP_{  \mathbb{Y}}(\mathbf{y}) \\
			&=& \int_{\mathbb{R} ^{d+1}} x f( x, \mathbf{y}) d P_{X}(x) \otimes P _{ \mathbb{Y}} (\mathbf{y}) = \int_{\mathbb{R} ^{d+1}} x f( x, \mathbf{y}) dP _{ (X, \mathbb{Y})} (x,\mathbf{y}) \\
			&=& \mathbf{E} X f (X, \mathbb{Y}).
		\end{eqnarray*}
	\end{proof}\qed

	We also have a lemma in the converse direction. By $\Vert \cdot\Vert _{\infty}$ we denote the infinity norm on $\mathbb{R} ^{d+1}$.

	\begin{lemma}
		Consider a random vector $ (X, \mathbb{Y}) $ with $ \mathbf{E} \vert X\vert <\infty$. Assume that \begin{equation}\label{14n-1}
			\mathbf{E} \mathcal{N} f(X, \mathbb{Y})= 0
		\end{equation}
		for all differentiable functions $f: \mathbb{R} ^{d+1}\to \mathbb{R}$ with $ \Vert \partial _{x}f \Vert _{ \infty}<\infty.$ Then $ X \sim N (0, \sigma ^{2})$ and $ X$ is independent  of $\mathbb{Y}$.
	\end{lemma}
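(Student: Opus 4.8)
The plan is to use a Fourier-analytic argument: condition (\ref{14n-1}) should be upgraded, by a suitable choice of test functions, to an identity for the joint characteristic function of $(X,\mathbb{Y})$ which forces it to factor as the product of the characteristic function of an $N(0,\sigma^2)$ law and that of $\mathbb{Y}$. First I would fix a frequency vector $(t,\mathbf{s})\in\mathbb{R}\times\mathbb{R}^d$ and apply (\ref{14n-1}) to the (real and imaginary parts of the) bounded smooth function $f(x,\mathbf{y})=e^{\mathrm{i}(tx+\langle \mathbf{s},\mathbf{y}\rangle)}$; strictly speaking one must first check the integrability/boundedness hypothesis ($\Vert\partial_x f\Vert_\infty=|t|<\infty$, and $f$ bounded), which is immediate, and that $\mathbf{E}|X|<\infty$ legitimizes the term $\mathbf{E}Xf(X,\mathbb{Y})$. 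Writing $\phi(t,\mathbf{s})=\mathbf{E}\,e^{\mathrm{i}(tX+\langle\mathbf{s},\mathbb{Y}\rangle)}$, the operator identity $\sigma^2\partial_x f = \mathrm{i}t\sigma^2 f$ gives
\begin{equation*}
	\mathrm{i}t\sigma^{2}\phi(t,\mathbf{s}) = \mathbf{E}\,X e^{\mathrm{i}(tX+\langle\mathbf{s},\mathbb{Y}\rangle)} = -\mathrm{i}\,\partial_{t}\phi(t,\mathbf{s}),
\end{equation*}
the last step using that $\mathbf{E}|X|<\infty$ permits differentiation under the expectation in the variable $t$. Hence $\partial_t\phi(t,\mathbf{s}) = -t\sigma^2\,\phi(t,\mathbf{s})$ for every $\mathbf{s}$.

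Next, for each fixed $\mathbf{s}$ this is a linear ODE in $t$ whose solution is $\phi(t,\mathbf{s})=\phi(0,\mathbf{s})\,e^{-\sigma^{2}t^{2}/2}$. Since $\phi(0,\mathbf{s})=\mathbf{E}\,e^{\mathrm{i}\langle\mathbf{s},\mathbb{Y}\rangle}=\phi_{\mathbb{Y}}(\mathbf{s})$ is exactly the characteristic function of $\mathbb{Y}$, and $e^{-\sigma^2 t^2/2}$ is the characteristic function of $N(0,\sigma^2)$, we obtain
\begin{equation*}
	\mathbf{E}\,e^{\mathrm{i}(tX+\langle\mathbf{s},\mathbb{Y}\rangle)} = e^{-\sigma^{2}t^{2}/2}\,\mathbf{E}\,e^{\mathrm{i}\langle\mathbf{s},\mathbb{Y}\rangle}
	\qquad\text{for all }(t,\mathbf{s})\in\mathbb{R}^{d+1}.
\end{equation*}
By the uniqueness theorem for characteristic functions, this means $P_{(X,\mathbb{Y})}=P_{Z}\otimes P_{\mathbb{Y}}$ with $Z\sim N(0,\sigma^{2})$; in particular, taking $\mathbf{s}=0$ gives $X\sim N(0,\sigma^{2})$, and the full factorization gives the independence of $X$ and $\mathbb{Y}$.

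The main obstacle is purely technical rather than conceptual: justifying that one may apply (\ref{14n-1}) to the complex exponentials, which are not compactly supported, and that differentiation under the expectation sign is legitimate. One way around using unbounded-domain exponentials directly is to apply the hypothesis only to test functions of the form $f(x,\mathbf{y})=g(x)e^{\mathrm{i}\langle\mathbf{s},\mathbf{y}\rangle}$ with $g$ smooth, bounded, with bounded derivative, approximating $x\mapsto e^{\mathrm{i}tx}$ and its derivative boundedly and pointwise (e.g. multiplying by a slowly-varying cutoff), then passing to the limit by dominated convergence — here the hypothesis $\mathbf{E}|X|<\infty$ is exactly what dominates the $Xf(X,\mathbb{Y})$ term uniformly. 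Alternatively, one can mimic the classical one-dimensional Stein argument: for fixed $\mathbf{y}$-frequency, solve the ODE $\sigma^2 f'(x)-xf(x)=h(x)-\mathbf{E}h(Z)$ and insert $f_h(x)e^{\mathrm{i}\langle\mathbf{s},\mathbf{y}\rangle}$ into (\ref{14n-1}) to conclude $\mathbf{E}[h(X)e^{\mathrm{i}\langle\mathbf{s},\mathbb{Y}\rangle}] = \mathbf{E}h(Z)\,\mathbf{E}e^{\mathrm{i}\langle\mathbf{s},\mathbb{Y}\rangle}$ for a measure-determining class of $h$, which again yields the factorization. I expect the paper to take whichever of these routes keeps the bookkeeping of the integrability conditions lightest, but in all cases the heart of the matter is the ODE $\partial_t\phi=-t\sigma^2\phi$ coming from the operator $\mathcal{N}$.
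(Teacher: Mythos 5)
Your argument is exactly the paper's proof: apply the hypothesis to the real and imaginary parts of the complex exponential, obtain the ODE $\partial_{\lambda_1}\varphi(\lambda_1,\boldsymbol{\lambda})=-\lambda_1\sigma^2\varphi(\lambda_1,\boldsymbol{\lambda})$, solve it using $\varphi(0,\boldsymbol{\lambda})=\varphi_{\mathbb{Y}}(\boldsymbol{\lambda})$, and conclude by uniqueness of characteristic functions. The extra care you take about integrability and differentiation under the expectation only makes the paper's (terser) argument more complete.
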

	\begin{proof}
		Let $\varphi $ be the characteristic function of the vector $ (X, \mathbb{Y})$, i.e.
		\begin{equation*}
			\varphi( \lambda_{1}, \boldsymbol{\lambda})= \mathbf{E} \left( e ^{i( \lambda _{1}X+ \boldsymbol{\lambda}\mathbb{Y}) }\right),
		\end{equation*}
		for $\lambda _{1} \in \mathbb{R}$ and $\boldsymbol{\lambda } \in \mathbb{R} ^{d}$. By applying (\ref{14n-1}) for the real and imaginary parts of $\varphi$, we get
		\begin{eqnarray*}
			&&	\partial _{\lambda _{1}} \varphi (\lambda_{1}, \boldsymbol{\lambda})= i \mathbf{E} \left( X e ^{i( \lambda _{1}X+\boldsymbol{ \lambda }\mathbb{Y}) }\right)\\
			&=&i \sigma ^{2} \mathbf{E} \left( \partial _{x} e ^{i( \lambda _{1}X+\boldsymbol{ \lambda} \mathbb{Y}) }\right) =-\lambda_{1} \sigma ^{2} \varphi( \lambda _{1},\boldsymbol{\lambda}).
		\end{eqnarray*}
		By noticing that for every $\boldsymbol{ \lambda} \in \mathbb{R}^{d}$, $\varphi (0, \boldsymbol{\lambda })= \varphi _{ \mathbb{Y}} (\boldsymbol{\lambda})$ (the characteristic function of the vector $\mathbb{Y}$), we obtain
		\begin{equation*}
			\varphi( \lambda _{1}, \boldsymbol{ \lambda} )=\varphi _{ \mathbb{Y}} (\boldsymbol{\lambda} )e ^{ -\frac{ \sigma ^{2} \lambda _{1} ^{2}}{2}},
		\end{equation*}
		and this implies $ X \sim N(0, \sigma ^{2}) $ and $ X$ independent of $\mathbb{Y}$. 
	\end{proof}\qed

	Let us now introduce  the multidimensional Stein's equation
	\begin{equation}
		\label{se}
		\mathcal{N}f (x, \mathbf{y}) = h(x, \mathbf{y}) - \mathbf{E} h ( Z, \mathbf{y}), \hskip0.3cm x \in \mathbb{R}, \mathbf{y}\in \mathbb{R} ^{d}
	\end{equation}
	where $Z\sim N(0, \sigma ^{2})$. In (\ref{se}), $h: \mathbb{R}^{d+1} \to \mathbb{R}$ is given and   we assume that $h$ is continuously differentiable with bounded partial derivatives. Let us show that (\ref{se}) admits a solution with suitable properties.
	
	\begin{prop}\label{pp11}
		Let $h: \mathbb{R} ^{d+1}\to \mathbb{R}$ be continuously differentiable with bounded partial derivatives. Then (\ref{se}) admits a unique bounded solution which is given by
		\begin{equation}
			\label{sse}f_{h}(x, \mathbf{y})= -\frac{1}{\sigma ^{2}}\int_{0} ^{1} \frac{1}{2\sqrt{t(1-t)} }\mathbf{E} \left[ Z h \left( \sqrt{t}x+\sqrt{1-t}Z, \mathbf{y} \right) \right] dt.
		\end{equation}
		Moreover, we have the following bounds:
		\begin{enumerate}
			\item \begin{equation}
				\label{bb1}\Vert f_{h}\Vert _{\infty} \leq \Vert \partial_{x_{1}} h \Vert _{\infty}.
			\end{equation}
			\item \begin{equation}
				\label{bb2}
				\Vert \partial _{x} f_{h}\Vert _{\infty} \leq \frac{1}{\sigma} \sqrt{\frac{2}{\pi}}\Vert \partial _{x} h\Vert _{\infty}.
			\end{equation}
			\item For $j=1,...,d$, if $\mathbf{y}=(y_{1},...,y_{d})$,
			\begin{equation}
				\label{bb3}
				\Vert \partial _{y_{j}} f_{h} \Vert _{\infty} \leq \frac{1}{\sigma} \sqrt{\frac{\pi}{2}} \Vert \partial _{x_{j}} h\Vert _{\infty},
			\end{equation}
		\end{enumerate}
	\end{prop}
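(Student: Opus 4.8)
The plan is to regard (\ref{se}), for each fixed $\mathbf{y}\in\mathbb{R}^{d}$, as the classical one-dimensional Stein equation for $N(0,\sigma^{2})$ with $\mathbf{y}$ entering only as an inert parameter: verify by direct substitution that (\ref{sse}) is a solution, get uniqueness among bounded solutions from the associated homogeneous ODE, and finally read the three bounds off the explicit integral representation while keeping track of the $y_{j}$-dependence.

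First I would check that the right-hand side of (\ref{sse}) is well defined, and observe that this already produces (\ref{bb1}). Since $\mathbf{E}Z=0$, one may subtract the constant $h(\sqrt{t}x,\mathbf{y})$ inside the expectation, so the mean value theorem gives
\[
\bigl|\mathbf{E}\bigl[Z\,h(\sqrt{t}x+\sqrt{1-t}Z,\mathbf{y})\bigr]\bigr|=\bigl|\mathbf{E}\bigl[Z\bigl(h(\sqrt{t}x+\sqrt{1-t}Z,\mathbf{y})-h(\sqrt{t}x,\mathbf{y})\bigr)\bigr]\bigr|\le\sigma^{2}\sqrt{1-t}\,\Vert\partial_{x_{1}}h\Vert_{\infty}.
\]
Consequently $\int_{0}^{1}\frac{1}{2\sqrt{t(1-t)}}\bigl|\mathbf{E}[Zh(\cdots)]\bigr|\,dt\le\frac{\sigma^{2}}{2}\Vert\partial_{x_{1}}h\Vert_{\infty}\int_{0}^{1}t^{-1/2}\,dt=\sigma^{2}\Vert\partial_{x_{1}}h\Vert_{\infty}<\infty$, which both shows $f_{h}$ is finite and proves $\Vert f_{h}\Vert_{\infty}\le\Vert\partial_{x_{1}}h\Vert_{\infty}$.

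Next I would verify that $f_{h}$ solves (\ref{se}). Differentiating (\ref{sse}) under the integral sign --- licit because $\frac{1}{2\sqrt{1-t}}\mathbf{E}|Z|\,\Vert\partial_{x_{1}}h\Vert_{\infty}$ is an integrable dominating function on $(0,1)$ --- gives
\[
\partial_{x}f_{h}(x,\mathbf{y})=-\frac{1}{\sigma^{2}}\int_{0}^{1}\frac{1}{2\sqrt{1-t}}\,\mathbf{E}\bigl[Z\,(\partial_{x_{1}}h)(\sqrt{t}x+\sqrt{1-t}Z,\mathbf{y})\bigr]\,dt.
\]
To compute $\sigma^{2}\partial_{x}f_{h}-xf_{h}$ I would introduce the Gaussian interpolation $\psi(t):=\mathbf{E}[h(\sqrt{t}x+\sqrt{1-t}Z,\mathbf{y})]$, for which $\psi(1)=h(x,\mathbf{y})$ and $\psi(0)=\mathbf{E}h(Z,\mathbf{y})$, differentiate to obtain
\[
\psi'(t)=\frac{x}{2\sqrt{t}}\mathbf{E}\bigl[(\partial_{x_{1}}h)(\sqrt{t}x+\sqrt{1-t}Z,\mathbf{y})\bigr]-\frac{1}{2\sqrt{1-t}}\mathbf{E}\bigl[Z(\partial_{x_{1}}h)(\sqrt{t}x+\sqrt{1-t}Z,\mathbf{y})\bigr],
\]
and then apply the Gaussian integration by parts identity $\mathbf{E}[Zg(Z)]=\sigma^{2}\mathbf{E}g'(Z)$ to replace $\mathbf{E}[(\partial_{x_{1}}h)(\sqrt{t}x+\sqrt{1-t}Z,\mathbf{y})]$ by $\frac{1}{\sigma^{2}\sqrt{1-t}}\mathbf{E}[Zh(\sqrt{t}x+\sqrt{1-t}Z,\mathbf{y})]$. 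Integrating the resulting expression for $\psi'$ over $[0,1]$ and matching it term by term with the formulas above for $-xf_{h}$ and $\sigma^{2}\partial_{x}f_{h}$ yields exactly $\sigma^{2}\partial_{x}f_{h}(x,\mathbf{y})-xf_{h}(x,\mathbf{y})=h(x,\mathbf{y})-\mathbf{E}h(Z,\mathbf{y})$. Uniqueness among bounded solutions then follows because the difference $g$ of two solutions satisfies, for each fixed $\mathbf{y}$, the homogeneous linear ODE $\sigma^{2}\partial_{x}g=xg$, hence $g(x,\mathbf{y})=g(0,\mathbf{y})e^{x^{2}/(2\sigma^{2})}$, and boundedness in $x$ forces $g(0,\mathbf{y})=0$, i.e. $g\equiv0$.

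It remains to establish (\ref{bb2}) and (\ref{bb3}) from the integral representations. For (\ref{bb2}), bound $|\mathbf{E}[Z(\partial_{x_{1}}h)(\cdots)]|\le\mathbf{E}|Z|\,\Vert\partial_{x}h\Vert_{\infty}=\sigma\sqrt{2/\pi}\,\Vert\partial_{x}h\Vert_{\infty}$ in the displayed formula for $\partial_{x}f_{h}$ and use $\int_{0}^{1}\frac{dt}{2\sqrt{1-t}}=1$. For (\ref{bb3}), differentiating (\ref{sse}) in $y_{j}$ (again by dominated convergence) gives $\partial_{y_{j}}f_{h}(x,\mathbf{y})=-\frac{1}{\sigma^{2}}\int_{0}^{1}\frac{1}{2\sqrt{t(1-t)}}\mathbf{E}[Z(\partial_{y_{j}}h)(\sqrt{t}x+\sqrt{1-t}Z,\mathbf{y})]\,dt$, and the crude estimate together with $\int_{0}^{1}\frac{dt}{2\sqrt{t(1-t)}}=\frac{\pi}{2}$ (substitution $t=\sin^{2}\theta$) produces the constant $\frac{1}{\sigma}\cdot\frac{\pi}{2}\sqrt{2/\pi}=\frac{1}{\sigma}\sqrt{\pi/2}$, which is the stated bound. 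I expect the main obstacle to be not the algebra but the rigorous justification inside the verification step: differentiating (\ref{sse}) under the integral and under the expectation, and applying the fundamental theorem of calculus to $\psi$ on the \emph{closed} interval $[0,1]$, in the presence of the integrable kernel singularities $\frac{1}{2\sqrt{t(1-t)}}$ at $t\in\{0,1\}$ --- these are tamed by the Lipschitz-in-$x_{1}$ bound on $h$ near $t=0$ and by $\mathbf{E}Z^{2}<\infty$ together with $\int_{0}^{1}(1-t)^{-1/2}\,dt<\infty$ near $t=1$.
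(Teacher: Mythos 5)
Your proposal is correct and follows essentially the same route as the paper: differentiation under the integral sign, the Gaussian integration-by-parts (Stein) identity combined with the interpolation $t\mapsto \mathbf{E}\,h\left(\sqrt{t}x+\sqrt{1-t}Z,\mathbf{y}\right)$ to verify that (\ref{sse}) solves (\ref{se}), uniqueness via the homogeneous equation $\sigma^{2}\partial_{x}g=xg$, and the three bounds read off the explicit integral representations with $\mathbf{E}\vert Z\vert=\sigma\sqrt{2/\pi}$. The only cosmetic difference is that you prove (\ref{bb1}) by centering with $\mathbf{E}Z=0$ and the mean value theorem directly in (\ref{sse}), whereas the paper first rewrites $f_{h}$ through the Stein identity and bounds that second representation; both yield the same constant.
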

	\begin{proof} By using the dominated convergence theorem, we get, by taking the derivative with respect to $x$ in (\ref{sse}),
		\begin{equation}\label{14n-3}
			\partial_{x}f_{h} (x, \mathbf{y}) =-\frac{1}{\sigma ^{2}} \int_{0} ^{1}  \frac{1}{2\sqrt{1-t} }\mathbf{E}\left[ Z \partial _{x}  h \left( \sqrt{t}x+\sqrt{1-t}Z, \mathbf{y} \right) \right].
		\end{equation}
		Now, we apply the standard Stein identity to the function $g(z)= h \left( \sqrt{t} x+ \sqrt{ 1-t}z, \mathbf{y}\right) $ and we obtain
		\begin{eqnarray}
			&&	\mathbf{E}\left[ Z \partial _{x}  h \left( \sqrt{t}+\sqrt{1-t}Z, \mathbf{y} \right) \right]\nonumber \\
			&=&\mathbf{E} g'(Z) = \sigma ^{2} \sqrt{1-t} \mathbf{E}\left[ \partial_{x} h \left( \sqrt{t} x + \sqrt{1-t} Z, \mathbf{y}\right)\right].\label{14n-2}
		\end{eqnarray}
		By plugging (\ref{14n-2}) into (\ref{sse}), the function $ f_{h}$ can be written as
		\begin{equation}\label{14n-4}
			f_{h} (x, \mathbf{y})= -\int_{0}^{1} \frac{1}{2\sqrt{t}} \mathbf{E} \left[ \partial_{x} h \left( \sqrt{t} x + \sqrt{1-t} Z, \mathbf{y}\right)\right]dt.
		\end{equation}
		By (\ref{14n-3}) and (\ref{14n-4}), we can write
		\begin{eqnarray*}
			&&	\partial_{x}f_{h} (x, \mathbf{y} ) - x f_{h} (x, \mathbf{y})\\
			&=&\int_{0} ^{1} \mathbf{E} \left[ \left( -\frac{Z} {2\sqrt{1-t}}+ \frac{x}{2\sqrt{t}}\right) \partial _{x} h \left( \sqrt{t}x+ \sqrt{1-t} Z, \mathbf{y}\right) \right] \\
			&=& \mathbf{E} \int_{0}^{1} \frac{d}{dt} h \left( \sqrt{t}x+ \sqrt{1-t} Z, \mathbf{y}\right) dt = h ( x, \mathbf{y} )-\mathbf{E} h (Z, \mathbf{y}).
		\end{eqnarray*}
		Consequently, $f_{h}$ given by (\ref{sse}) is a solution to (\ref{se}).  To prove (\ref{bb1}), we use (\ref{14n-4}) to get
		\begin{equation*}
			\Vert f_{h}\Vert _{\infty} \leq \int_{0}^{1} \frac{1}{2\sqrt{t}}\Vert \partial _{x_{1}} h\Vert _{\infty}\leq \Vert \partial _{x} h\Vert _{\infty}
		\end{equation*}
		The bound (\ref{bb2}) follows from (\ref{14n-3}) since
		\begin{equation*}
			\Vert \partial _{x} f_{h} \Vert _{\infty} \leq \frac{ \mathbf{E} \vert Z\vert}{\sigma ^{2}}\Vert \partial_{x} h\Vert _{\infty} \leq \sigma ^{-1} \sqrt{\frac{2}{\pi}}\Vert \partial_{x} h\Vert _{\infty}.
		\end{equation*}
		To prove (\ref{bb3}), we differentiate with respect to $y_{j}, j=1,...,d$ in (\ref{sse}),
		\begin{equation*}
			\partial _{y_{j}}f_{h}(x,\mathbf{y})= -\frac{1}{\sigma ^{2}}\int_{0} ^{1} \frac{1}{2\sqrt{t(1-t)} }\mathbf{E} \left[ Z \partial _{y_{j}} h \left( \sqrt{t}x+\sqrt{1-t}Z, \mathbf{y} \right) \right] dt
		\end{equation*}
		and
		
		\begin{equation*}
			\Vert \partial_{y_{j}} f_{h}\Vert _{\infty} \leq \frac{ \mathbf{E} \vert Z\vert}{\sigma ^{2}}\Vert \partial_{y_{j}} h\Vert _{\infty} \int_{0} ^{1} \frac{1}{2\sqrt{t(1-t)}}dt =\frac{1}{\sigma} \sqrt{\frac{\pi}{2}} \Vert \partial_{y_{j}} h\Vert _{\infty}.
		\end{equation*}
		To finish the proof, we notice that for any other solution $g_{h}$ to (\ref{se}), one has $$\partial _{x} \left( e ^{-\frac{x ^{2}}{2\sigma ^{2}}} \left( f_{h}(x,\mathbf{y}) - g_{h}(x,\mathbf{y})\right) \right) =0$$ so $g_{h}(x, \mathbf{y})= f_{h}(x,\mathbf{y})+e ^{\frac{x ^{2}}{2\sigma ^{2}}}  c(\mathbf{y})$ so $g_{h}$ is bounded if and only if $c(\mathbf{y})=0.$ 
	\end{proof}\qed
	
	By Proposition \ref{pp11}, if $f_{h}$ is the solution (\ref{sse}) to the Stein's equation (\ref{se}), we have
	\begin{equation*}
		\sigma ^{2} \partial _{x} f_{h} (x, \mathbf{y})-x f_{h}(x, \mathbf{y}) =h(x, \mathbf{y})-\mathbf{E} h(Z, \mathbf{y})
	\end{equation*}
	for any $h$ differentiable with bounded  partial derivatives.  Let $X, \mathbb{Y}$ be random vectors  with $\mathbf{E}\vert X\vert <\infty$. Let us integrate with respect to $\theta:= P_{(X, \mathbb{Y})}$ in the above identity. We have
	\begin{equation*}
		\int_{\mathbb{R} ^{d+1}} h(x,\mathbf{y} )d\theta (x,\mathbf{y})= \mathbf{E} h(X,\mathbb{Y})
	\end{equation*}
	and
	\begin{eqnarray*}
		&&	\int_{\mathbb{R} ^{d+1}} \mathbf{E}h(Z, \mathbf{y})d\theta (x, \mathbf{y})\\
		&=& \int_{\mathbb{R} ^{d+1}} \left( \int_{\mathbb{R}} h(z, \mathbf{y}) dP_{Z} (z)\right) d\theta (x,\mathbf{y})\\
		&=&\int_{\mathbb{R} ^{d}} \left(  \int_{\mathbb{R}} h(z, \mathbf{y}) dP_{Z} (z)\right)dP_{\mathbb{Y} } (\mathbf{y}) \\
		&=&\int_{\mathbb{R} ^{d+1}} h(x, \mathbf{y})dP_{Z}\otimes P_{\mathbb{Y}}(\mathbf{y})=\int_{\mathbb{R} ^{d+1}} h(x,\mathbf{y})d\eta (x,\mathbf{y}),
	\end{eqnarray*}
	with
	$$\eta= P_{Z}\otimes P_{\mathbb{Y}}.$$
	Therefore
	\begin{eqnarray}
		&&	\sigma ^{2}\mathbf{E} \partial _{x} f_{h} (X, \mathbb{Y} )-\mathbf{E}X f_{h}(X,\mathbb{Y})= \mathbf{E} h(X, \mathbb{Y})-\mathbf{E} h(Z',\mathbb{Y})\nonumber
		\\&=& \int_{\mathbb{R} ^{d+1}} h(x, \mathbf{y})d\theta (x,\mathbf{y})- \int_{\mathbb{R} ^{d+1}} h(x, \mathbf{y})d\eta (x,\mathbf{y})\label{14n-6}
	\end{eqnarray}
	where $Z'$ has the same law as $Z\sim N(0, \sigma ^{2})$  and $Z'$ is  independent of $\mathbb{Y}$.

	\subsection{Stein method and Malliavin calculus}

	Let 
	$$\mathcal{A}= \{ h: \mathbb{R}^{n}\to \mathbb{R},  h \mbox{ is Lipschitz continuous with } \Vert h\Vert _{Lip}\leq 1 \}$$
	and let $F,G$ be two $n$-dimensional random vectors  such that $ h(F), h(G)\in L ^{1}(\Omega)$ for every $h\in \mathcal{A}$. Then the Wasserstein distance between the probability distributions of $F$ and $G$ is defined by 
	\begin{equation}
		\label{dw}
		d_{W}(P_{F}, P_{G})=\sup_{h\in \mathcal{A}} \left| \mathbf{E}h(F)- \mathbf{E} h(G)\right|.
	\end{equation}
	We denoted by $\Vert h\Vert_{Lip} $  the Lipschitz norm of $h$ given by 
	\begin{equation*}
		\Vert h\Vert _{Lip}= \sup_{x,y\in \mathbb{R} ^{n}, x\not=y} \frac{ \vert h(x)-h(y)\vert}{\Vert x-y\Vert_{\mathbb{R} ^{n}}},
	\end{equation*} 
	with $\Vert \cdot \Vert_{\mathbb{R} ^{n}}$ the Euclidean norm in $\mathbb{R} ^{n}$.  The operators $D, L, \delta$ below are defined with respect to an isonormal process $(W(h), h\in H)$, see the Appendix. By $\langle \cdot, \cdot\rangle$ we denote the scalar product in the Hilbert space $H$.

	We use the ideas of the Stein method for normal approximation (see \cite{NP-book}) to prove the following result. 
	
	\begin{theorem}\label{tt1}
		Let $X$ be a  centered random variable in $\mathbb{D} ^{1,2}$ and let $\mathbb{Y}=(Y_{1},...,Y_{d})$ be such that $ Y_{j}\in \mathbb{D} ^{1,2}$ for all $j=1,...,d$. Let $\theta= P_{ (X, \mathbb{Y})}$ and $\eta = P_{Z}\otimes P _{\mathbb{Y}}$, where $ Z\sim N(0,\sigma ^{2})$. Then
		\begin{equation}\label{ineq1}
			d_{W} (\theta, \eta) \leq C \left( \mathbf{E} \left| \sigma ^{2} -\langle D(-L) ^{-1} X, DX\rangle \right| + \sum_{j=1} ^{d} \mathbf{E} \left| \langle D(-L) ^{-1} X, DY_{j}\rangle \right| \right). 
		\end{equation}
	\end{theorem}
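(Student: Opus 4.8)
The plan is to start from the integrated Stein identity (\ref{14n-6}) and to control its right-hand side by Malliavin integration by parts. Since the solution $f_h$ of the Stein equation is constructed in Proposition \ref{pp11} only for $h$ that is $C^1$ with bounded partial derivatives, whereas $d_W(\theta,\eta)$ ranges over the larger class $\mathcal{A}$ of $1$-Lipschitz functions, I would first reduce to smooth test functions: given $h\in\mathcal{A}$, mollify it into $h_\varepsilon=h*\rho_\varepsilon$ with $\rho_\varepsilon$ a smooth compactly supported kernel, so that $h_\varepsilon$ is $C^\infty$, $\Vert\partial_{x_i}h_\varepsilon\Vert_\infty\le\Vert h\Vert_{Lip}\le1$ for every coordinate $i$, and $h_\varepsilon\to h$ locally uniformly with a uniform Lipschitz bound. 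Dominated convergence (legitimate since $X\in\mathbb{D}^{1,2}\subset L^2(\Omega)$, so $\mathbf{E}\vert X\vert<\infty$) then gives $\mathbf{E}h_\varepsilon(X,\mathbb{Y})\to\mathbf{E}h(X,\mathbb{Y})$ and $\mathbf{E}h_\varepsilon(Z',\mathbb{Y})\to\mathbf{E}h(Z',\mathbb{Y})$, where $Z'\sim N(0,\sigma^2)$ is independent of $\mathbb{Y}$. Hence it suffices to bound $\vert\mathbf{E}h(X,\mathbb{Y})-\mathbf{E}h(Z',\mathbb{Y})\vert$ for $h$ smooth with $\Vert\partial_{x_i}h\Vert_\infty\le1$.

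For such an $h$, let $f_h$ be the solution (\ref{sse}). By (\ref{14n-6}),
\[
\mathbf{E}h(X,\mathbb{Y})-\mathbf{E}h(Z',\mathbb{Y})=\sigma^2\,\mathbf{E}\,\partial_x f_h(X,\mathbb{Y})-\mathbf{E}\big[Xf_h(X,\mathbb{Y})\big].
\]
The point is to rewrite the last term with Malliavin calculus. Since $X$ is centered and in $\mathbb{D}^{1,2}$, one has $X=\delta\big(D(-L)^{-1}X\big)$; moreover the bounds (\ref{bb1})--(\ref{bb3}) show that $f_h$ is bounded, $C^1$, and has bounded gradient, so $f_h(X,\mathbb{Y})\in\mathbb{D}^{1,2}$ with
\[
Df_h(X,\mathbb{Y})=\partial_x f_h(X,\mathbb{Y})\,DX+\sum_{j=1}^{d}\partial_{y_j}f_h(X,\mathbb{Y})\,DY_j
\]
by the chain rule. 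The duality between $\delta$ and $D$ then yields
\[
\mathbf{E}\big[Xf_h(X,\mathbb{Y})\big]=\mathbf{E}\big\langle D(-L)^{-1}X,\,Df_h(X,\mathbb{Y})\big\rangle=\mathbf{E}\big[\partial_x f_h(X,\mathbb{Y})\,\langle D(-L)^{-1}X,DX\rangle\big]+\sum_{j=1}^{d}\mathbf{E}\big[\partial_{y_j}f_h(X,\mathbb{Y})\,\langle D(-L)^{-1}X,DY_j\rangle\big].
\]
Substituting back and combining $\sigma^2\,\mathbf{E}\,\partial_x f_h(X,\mathbb{Y})$ with the first summand gives
\[
\mathbf{E}h(X,\mathbb{Y})-\mathbf{E}h(Z',\mathbb{Y})=\mathbf{E}\big[\partial_x f_h(X,\mathbb{Y})\big(\sigma^2-\langle D(-L)^{-1}X,DX\rangle\big)\big]-\sum_{j=1}^{d}\mathbf{E}\big[\partial_{y_j}f_h(X,\mathbb{Y})\,\langle D(-L)^{-1}X,DY_j\rangle\big].
\]

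Finally I would take absolute values, pull out $\Vert\partial_x f_h\Vert_\infty$ and $\Vert\partial_{y_j}f_h\Vert_\infty$, and use (\ref{bb2}) and (\ref{bb3}): since $\Vert\partial_x h\Vert_\infty\le1$ and $\Vert\partial_{x_j}h\Vert_\infty\le1$, these are at most $\sigma^{-1}\sqrt{2/\pi}$ and $\sigma^{-1}\sqrt{\pi/2}$ respectively, so that
\[
\big\vert\mathbf{E}h(X,\mathbb{Y})-\mathbf{E}h(Z',\mathbb{Y})\big\vert\le C\Big(\mathbf{E}\big\vert\sigma^2-\langle D(-L)^{-1}X,DX\rangle\big\vert+\sum_{j=1}^{d}\mathbf{E}\big\vert\langle D(-L)^{-1}X,DY_j\rangle\big\vert\Big)
\]
with $C=\sigma^{-1}\sqrt{\pi/2}$, uniformly in $h$. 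Passing to the mollified approximations $h_\varepsilon$ and taking the supremum over $h\in\mathcal{A}$ gives (\ref{ineq1}). The main technical obstacles are the mollification step — keeping a coordinatewise Lipschitz constant $\le1$ and ensuring convergence of the expectations — and checking that $f_h(X,\mathbb{Y})\in\mathbb{D}^{1,2}$ so that the integration by parts and the chain rule apply; both are routine given the a priori bounds on $f_h$ from Proposition \ref{pp11} and the hypothesis $X,Y_j\in\mathbb{D}^{1,2}$, but the regularization of $h$ is essential, since Proposition \ref{pp11} produces $f_h$ only for differentiable $h$.
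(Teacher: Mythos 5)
Your argument is correct and follows essentially the same route as the paper: the identity (\ref{14n-6}), the representation $X=\delta D(-L)^{-1}X$ with the chain rule and duality, the bounds (\ref{bb2})--(\ref{bb3}), and a final smoothing step to pass from differentiable test functions to the Lipschitz class $\mathcal{A}$. The only (immaterial) difference is that you regularize $h$ by convolution with a compactly supported mollifier, whereas the paper uses Gaussian smoothing $h_{\eps}(x,\mathbf{y})=\mathbf{E}\,h(x+\sqrt{\eps}N,\mathbf{y}+\sqrt{\eps}\mathbb{N})$ borrowed from \cite{Pi}; both preserve the coordinatewise Lipschitz bound and give the required convergence of expectations.
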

	\begin{proof}
		Let $h: \mathbb{R} ^{d+1}\to \mathbb{R}$  be continuously differentiable wth bounded derivatives and let $f_{h}$ be the corresponding solution to the Stein's equation (\ref{se}). By using the well-known formula $ X= \delta D (-L) ^{-1} X$ in (\ref{14n-6}), we obtain, by integrating by parts 
		\begin{eqnarray*}
			&&\int_{\mathbb{R} ^{d+1}} h(x, \mathbf{y})d\theta (x, \mathbf{y} )- \int_{\mathbb{R} ^{d+1}} h(x, \mathbf{y} )d\eta (x, \mathbf{y})\\
			&=& \sigma ^{2} \mathbf{E}\partial _{x} f_{h} (X,\mathbb{Y})-\mathbf{E} \delta D (-L) ^{-1}X f_{h} (X,\mathbb{Y})\\
			&=& \sigma ^{2} \mathbf{E} \partial _{x} f_{h} (X,\mathbb{Y})-\mathbf{E} \langle D(-L) ^{-1} X, Df_{h} (X,\mathbb{Y})\rangle\\
			&=& \mathbf{E} \partial _{x}  f_{h} (X, \mathbb{Y} )\left( \sigma ^{2} - \langle D(-L) ^{-1} X, DX \rangle \right) \\
			&&- \mathbf{E}\sum_{j=1} ^{d} \partial _{x_{j}} f_{h} (X, \mathbb{Y})\langle D(-L) ^{-1} X, DY_{j} \rangle.
		\end{eqnarray*}
		Hence, by using inequalities (\ref{bb2}) and (\ref{bb3}) in Proposition \ref{pp11}, 
		\begin{eqnarray}
			&&	\left| \int_{\mathbb{R} ^{d+1}} h(x, \mathbf{y} )d\theta (x, \mathbf{y} )- \int_{\mathbb{R} ^{d+1}} h(x, \mathbf{y} )d\eta (x, \mathbf{y} )\right| \nonumber \\
			&& \leq C \left( \mathbf{E} \left| \sigma ^{2} -\langle D(-L) ^{-1} X, DX\rangle \right| + \sum_{j=1} ^{d} \mathbf{E} \left| \langle D(-L) ^{-1} X, DY_{j}\rangle \right| \right).\label{14n-8}
		\end{eqnarray}
		To finish the proof, we borrow again an argument from \cite{Pi} (proof of Lemma 9 in this reference) to approximate a Lipschitz function by continuously differentiable functions with bounded derivatives. Indeed, if $ h \in \mathcal{A}$ and $\eps>0$, then consider 
		\begin{equation*}
			h_{\eps} (x,y_{1}...,y_{d})= \mathbf{E} h \left( x+\sqrt{\eps} N,y_{1}+ \sqrt{\eps}N_{1},..., y_{d}+\sqrt{\eps} N_{d} \right), 
		\end{equation*}
		where $N, N_{1},...,N_{d}$ are independent standard normal random variables. Then $h_{\eps}$ is differentiable  and it safisfies
		\begin{equation*}
			\Vert h_{\eps} -h\Vert _{\infty} \to _{\eps \to 0}0, \hskip0.3cm \Vert \partial _{x} h_{\eps} \Vert _{\infty} \leq \Vert h_{\eps}\Vert _{Lip} \leq \Vert h\Vert _{ Lip} \leq 1
		\end{equation*}
		and
		\begin{equation*}
			\max_{j=1,...,d}\Vert \partial _{y_{j}} h_{\eps} \Vert _{\infty} \leq \Vert h_{\eps}\Vert _{Lip} \leq \Vert h\Vert _{ Lip} \leq 1.
		\end{equation*}
		Therefore, by (\ref{14n-8}), 
		\begin{eqnarray*}
			&&	\left| \int_{\mathbb{R} ^{d+1}} h(x, \mathbf{y} )d\theta (x, \mathbf{y})- \int_{\mathbb{R} ^{d+1}} h(x, \mathbf{y})d\eta (x, \mathbf{y})\right| \\
			&\leq &  2\Vert h_{\eps}-h\Vert _{\infty} + 	\left| \int_{\mathbb{R} ^{d+1}} h_{\eps}(x, \mathbf{y})d\theta (x, \mathbf{y})- \int_{\mathbb{R} ^{d+1}} h_{\eps}(x, \mathbf{y})d\eta (x, \mathbf{y})\right| \\
			&\leq & 2\Vert h_{\eps}-h\Vert _{\infty} + C \left( \mathbf{E} \left| \sigma ^{2} -\langle D(-L) ^{-1} X, DX\rangle \right| + \sum_{j=1} ^{d} \mathbf{E} \left| \langle D(-L) ^{-1} X, DY_{j}\rangle \right| \right)
		\end{eqnarray*}
		and we conclude by letting $\eps \to 0$.  
	\end{proof}\qed
	
	The corollary below is used to deal with random vectors with components in Wiener chaos. 
	\begin{corollary}\label{cor11}
		With the notation from Theorem \ref{tt1}, if $ X,Y_{1}..., Y_{d}\in \mathbb{D} ^{1,4}$, then
		\begin{equation}\label{31m-1}
			d_{W} (\theta, \eta) \leq C \left[ \left( \mathbf{E} \left| \sigma ^{2} -\langle D(-L) ^{-1} X, DX\rangle \right| ^{2}\right) ^{\frac{1}{2}}+ \sum_{j=1} ^{d} \left(\mathbf{E} \left| \langle D(-L) ^{-1} X, DY_{j}\rangle \right|^{2} \right) ^{\frac{1}{2}} \right]. 
		\end{equation}
	\end{corollary}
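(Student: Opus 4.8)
The plan is to obtain (\ref{31m-1}) from the inequality (\ref{ineq1}) of Theorem \ref{tt1} by a routine application of the Cauchy--Schwarz inequality, once we check that the additional hypothesis $X, Y_1,\dots,Y_d \in \mathbb{D}^{1,4}$ guarantees that the $L^2(\Omega)$ norms appearing on the right-hand side of (\ref{31m-1}) are finite, so that the bound is meaningful. First I would note that Theorem \ref{tt1} applies verbatim, since $\mathbb{D}^{1,4}\subset \mathbb{D}^{1,2}$ and $X$ is centered; this gives
\begin{equation*}
	d_{W}(\theta,\eta)\leq C\left(\mathbf{E}\left|\sigma^{2}-\langle D(-L)^{-1}X, DX\rangle\right| + \sum_{j=1}^{d}\mathbf{E}\left|\langle D(-L)^{-1}X, DY_{j}\rangle\right|\right).
\end{equation*}
Then, since for any integrable random variable $\xi$ one has $\mathbf{E}|\xi|\leq (\mathbf{E}\xi^{2})^{1/2}$, applying this to $\xi=\sigma^{2}-\langle D(-L)^{-1}X,DX\rangle$ and to each $\xi=\langle D(-L)^{-1}X,DY_{j}\rangle$ yields (\ref{31m-1}) directly.

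The one point that deserves a sentence of justification is the finiteness of the second moments, i.e. that the terms in (\ref{31m-1}) are not vacuously infinite. For this I would invoke the standard fact that $(-L)^{-1}$ maps $\mathbb{D}^{1,4}$ continuously into $\mathbb{D}^{1,4}$ (it acts as multiplication by $1/n$ on the $n$th chaos for $n\geq 1$, and $X$ centered means its projection on the $0$th chaos vanishes), so $D(-L)^{-1}X \in L^{4}(\Omega;H)$; since also $DX, DY_{j}\in L^{4}(\Omega;H)$, the Cauchy--Schwarz inequality in $H$ followed by Hölder's inequality in $\Omega$ gives $\langle D(-L)^{-1}X, DX\rangle \in L^{2}(\Omega)$ and $\langle D(-L)^{-1}X, DY_{j}\rangle \in L^{2}(\Omega)$ for each $j$. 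Hence the right-hand side of (\ref{31m-1}) is finite, and the displayed chain of inequalities above is valid.

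Since the whole argument is a one-line consequence of Theorem \ref{tt1} together with Cauchy--Schwarz, there is no genuine obstacle here; the only mildly non-trivial ingredient is the mapping property of $(-L)^{-1}$ on $\mathbb{D}^{1,4}$, which is classical and already implicitly used when Corollary \ref{cor11} is applied in the sequel (it is precisely what makes the corollary usable for random variables in a fixed Wiener chaos, where all $L^{p}$ norms are equivalent by hypercontractivity).
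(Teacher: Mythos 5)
Your proof is correct and follows essentially the same route as the paper: apply Theorem \ref{tt1} and then the inequality $\mathbf{E}|\xi|\leq(\mathbf{E}\xi^{2})^{1/2}$ term by term, noting that the hypothesis $X,Y_{1},\dots,Y_{d}\in\mathbb{D}^{1,4}$ ensures $\langle D(-L)^{-1}X,DX\rangle$ and $\langle D(-L)^{-1}X,DY_{j}\rangle$ lie in $L^{2}(\Omega)$. Your extra paragraph justifying that square-integrability via the mapping property of $(-L)^{-1}$ on $\mathbb{D}^{1,4}$ simply spells out what the paper asserts without detail.
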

	\begin{proof}The proof follows from Theorem \ref{tt1}, by using Cauchy-Schwarz's inequality in the right-hand side of (\ref{ineq1}) and by noticing that $ \langle D(-L) ^{-1} X, DY_{j}\rangle $ belongs to $L ^{2} (\Omega)$ when $X, Y_{j} \in \mathbb{D} ^{1,4}$, for $j=1,2,...,d$. \end{proof}\qed

	\begin{remark}
		As a particular case of relation (\ref{ineq1}) in Theorem \ref{tt1}, it follows that if $ X_{1} \sim N(0, \sigma ^{2})$ and    $\langle DX_{1}, DX_{2} \rangle =0$ almost surely, then $X_{1}$ is independent of $X_{2}$. In particular, this means that, if $X_{1}=I_{1}(h)$ and $X_{2}=\sum_{n\geq 0} I_{n}(g_{n})$ (with  $h\in H, g_{n}\in H ^{\odot n}$ for every $n\geq 1$), then $h\otimes _{1} g_{n}=0 $ almost everywhere on $H ^{\otimes n-1}$ implies the independence of $X_{1} $ and $ X_{2}$.  This is related to the independence criterion for multiple stochastic integrals in \cite{UZ}, which states that two random variables $I_{p}(f)$ and $I_{q}(q)$ (with $f\in H ^{\odot p}, g\in H ^{\odot q}$) are independent if and only if $ f\otimes _{1}g$ vanishes almost everywhere on $ H ^{\otimes p+q-2}$. 
	\end{remark}

	\section{Asymptotic independence on Wiener chaos}
	The variant of the Stein's method presented in Section \ref{sec2} lead to some  strong consequences when it is applied to sequences of multiple  stochastic integrals. Here we describe and discuss our main findings in the case of the Wiener chaos. The proofs will be detailed in the next section. 
	
	\subsection{Preliminary tools}\label{sec31}
	Let us start with some auxiliary results that will be used several times in the sequel. Recall that $H$ is a real and separable Hilbert space and $W=(W(h), h\in H)$ is an isonormal process on the probability space $\left( \Omega, \mathcal{G}, P\right)$, where $\mathcal{G}$ is the sigma-algebra generated by $W$.  The operators $D, L$ and the multiple stochastic integral $I_{p}, p\geq 1$ are all with respect to $W$. 
	
	This our first auxiliary result.  The contraction of two kernels has been defined in the appendix (see (\ref{contra})).
	\begin{lemma}\label{ll1}
		Let $ f_{1}, f_{3}\in H ^{\odot p}$ and $ f_{2}, f_{4}\in H ^{\odot q}$ with $p,q\geq 1$. Then, for every $r=0,..., p\wedge q$,
		\begin{equation*}
			\langle f_{1}\otimes _{r}f_{2}, f_{3}\otimes _{r} f_{4}\rangle  _{ H ^{\otimes p+q-2r}}= 	\langle f_{1}\otimes _{p-r}f_{3}, f_{2}\otimes _{q-r} f_{4} \rangle _{ H ^{\otimes 2r}}.
		\end{equation*}
	\end{lemma}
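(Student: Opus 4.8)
The plan is to pass to a concrete $L^{2}$-realization of $H$, recognize both sides as one and the same fourfold integral, and rearrange it via Fubini's theorem. Since $H$ is real and separable it is isometrically isomorphic to $L^{2}(T,\mathcal{B},\mu)$ for some $\sigma$-finite measure space (one may even take $H\cong\ell^{2}$, i.e. $T=\mathbb{N}$ with counting measure); under this isomorphism $H^{\otimes n}\cong L^{2}(T^{n},\mu^{\otimes n})$, the contraction (\ref{contra}) becomes the usual contraction of kernels, and inner products are preserved. So I would assume $H=L^{2}(T,\mu)$ and regard $f_{1},f_{3}$ as square-integrable symmetric kernels on $T^{p}$ and $f_{2},f_{4}$ as such on $T^{q}$.

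Writing generic points of $T^{p-r}$, $T^{q-r}$ and of the two copies of $T^{r}$ as $t$, $s$, $u$, $v$, the definition of $\otimes_{r}$ gives $(f_{1}\otimes_{r}f_{2})(t,s)=\int_{T^{r}}f_{1}(t,u)f_{2}(s,u)\,d\mu^{\otimes r}(u)$ and similarly for $f_{3}\otimes_{r}f_{4}$; substituting into the left-hand side and expanding the inner product in $H^{\otimes p+q-2r}$ over $(t,s)$ produces
\begin{equation*}
\int_{T^{p-r}}\!\int_{T^{q-r}}\!\int_{T^{r}}\!\int_{T^{r}} f_{1}(t,u)\,f_{2}(s,u)\,f_{3}(t,v)\,f_{4}(s,v)\,d\mu^{\otimes r}(u)\,d\mu^{\otimes r}(v)\,d\mu^{\otimes(q-r)}(s)\,d\mu^{\otimes(p-r)}(t).
\end{equation*}
Integrating first in $t$ and then in $s$: by the symmetry of $f_{1},f_{3}$ the $t$-integral is exactly $(f_{1}\otimes_{p-r}f_{3})(u,v)\in H^{\otimes 2r}$, and the $s$-integral is $(f_{2}\otimes_{q-r}f_{4})(u,v)$, so the remaining integral over $(u,v)\in T^{r}\times T^{r}$ is precisely $\langle f_{1}\otimes_{p-r}f_{3},\,f_{2}\otimes_{q-r}f_{4}\rangle_{H^{\otimes 2r}}$, the right-hand side. (The degenerate cases $r=0$ and $r=p\wedge q$ are covered by this computation, some of the $T^{k}$ then being a single point.)

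The only point requiring care — and the sole (mild) obstacle — is the legitimacy of this reshuffling of the integration order, i.e. the absolute integrability of the fourfold integrand; here one uses that $f_{1},f_{3},f_{2},f_{4}$ are square-integrable kernels. For fixed $(u,v)$, Cauchy--Schwarz bounds $\int|f_{1}(t,u)f_{3}(t,v)|\,d\mu^{\otimes(p-r)}(t)\le\|f_{1}(\cdot,u)\|_{L^{2}}\,\|f_{3}(\cdot,v)\|_{L^{2}}$ and analogously for the $f_{2},f_{4}$ factor, and a final Cauchy--Schwarz in $(u,v)$ bounds the total mass by $\|f_{1}\|\,\|f_{2}\|\,\|f_{3}\|\,\|f_{4}\|<\infty$. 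An equivalent, representation-free route expands each kernel in the orthonormal basis of $H^{\otimes p}$ (resp. $H^{\otimes q}$) induced by one of $H$: both members of the identity then collapse to the same absolutely convergent multiple series in the coefficients, the convergence again coming from Cauchy--Schwarz, and the symmetry of the kernels is precisely what lets one move the contracted indices freely.
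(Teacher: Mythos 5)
Your proof is correct. Note, however, that the paper does not prove this identity at all: its ``proof'' is a one-line citation to Lemma 4.4 of the reference \cite{T2}, so your argument is necessarily a different (namely, an actual) route. What you do is the standard direct verification: realize $H$ as $L^{2}(T,\mu)$, write both sides as the same absolutely convergent fourfold integral of $f_{1}(t,u)f_{2}(s,u)f_{3}(t,v)f_{4}(s,v)$, and regroup the integrations by Fubini, with the Cauchy--Schwarz bound $\Vert f_{1}\Vert\,\Vert f_{2}\Vert\,\Vert f_{3}\Vert\,\Vert f_{4}\Vert$ justifying the interchange and the symmetry of the kernels justifying the placement of the contracted variables (the paper's definition (\ref{contra}) contracts the \emph{first} $r$ arguments, so symmetry is indeed needed, as you observe). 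Your treatment of the endpoint cases $r=0$ and $r=p\wedge q$ and your alternative basis-expansion remark are also fine. What your write-up buys is self-containedness and an explicit check of the only delicate point (integrability for Fubini), at the cost of fixing an $L^{2}$-realization of $H$ --- a reduction the paper itself uses without comment in the proof of Lemma \ref{llkey}, so this is consistent with the paper's conventions.
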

	\noindent {\bf Proof: } This is e.g. Lemma 4.4 in \cite{T2}. \qed
	
	The following well-known result allows to express the $L^{2}$-norm of $\langle D(-L) ^{-1}X, DY\rangle $ when $X$ and $Y$ are multiple stochastic integrals. 
	\begin{lemma}\label{ll2}
		Let $X= I_{p}(f)$ and $Y= I_{q}(g) $ with $p,q\geq 1$ and $f\in H ^{\odot p}, g\in H ^{\odot q}$. Then
		\begin{equation*}
			\mathbf{E}	\langle D(-L)^{-1}X, DY\rangle ^{2} _{H} = (\mathbf{E}(XY))^{2} 1_{p=q}+ \sum_{r=1}^{p\wedge q} c(r,p,q) \Vert f\widetilde{\otimes }_{r} g\Vert ^{2} _{H ^{\otimes p+q-2r}},
		\end{equation*}
		where $c(r,p,q) $ are strictly positive combinatorial contants for $r=1,..., (p\wedge q)-1$ and 
		$$
		c(p\wedge q, p,q)=\begin{cases}
			0, \mbox{ if } p=q\\
			>0, \mbox {if } p\not=q.
		\end{cases}$$
	\end{lemma}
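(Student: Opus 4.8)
The identity to be proved is the classical formula expressing the $L^2$-norm of $\langle D(-L)^{-1}X, DY\rangle_H$ in terms of contractions; this is essentially Lemma 5.2.4 (or the computations in Chapter 6) of \cite{NP-book}, so the plan is to reproduce that computation in the present notation. First I would write $X = I_p(f)$, $Y = I_q(g)$ and recall that $(-L)^{-1}X = \frac1p I_p(f)$, so that $D(-L)^{-1}X = I_{p-1}(f)$ (acting as an $H$-valued random variable, with the kernel $f(\cdot, \star)$ seen as a symmetric function of $p-1$ variables taking values in $H$), while $DY = q\, I_{q-1}(g)$. Hence
\begin{equation*}
\langle D(-L)^{-1}X, DY\rangle_H = q \int_H I_{p-1}\big(f(\cdot,t)\big)\, I_{q-1}\big(g(\cdot,t)\big)\, dt,
\end{equation*}
and the first real step is to apply the product formula for multiple integrals,
\begin{equation*}
I_{p-1}(\phi) I_{q-1}(\psi) = \sum_{s=0}^{(p-1)\wedge(q-1)} s!\binom{p-1}{s}\binom{q-1}{s} I_{p+q-2-2s}\big(\phi \widetilde{\otimes}_s \psi\big),
\end{equation*}
to the integrand with $\phi = f(\cdot,t)$, $\psi = g(\cdot,t)$, then integrate the $dt$ inside. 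Relabeling $r = s+1$ turns the $H$-integral plus the $s$-contraction into an honest $r$-contraction $f\otimes_r g$, so that
\begin{equation*}
\langle D(-L)^{-1}X, DY\rangle_H = \sum_{r=1}^{p\wedge q} q\,(r-1)!\binom{p-1}{r-1}\binom{q-1}{r-1}\, I_{p+q-2r}\big(f\widetilde{\otimes}_r g\big).
\end{equation*}

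**Taking the second moment.** Next I would square and take expectations, using the orthogonality of multiple integrals of different orders and the isometry $\mathbf{E}\, I_m(h)^2 = m!\,\|\widetilde{h}\|^2_{H^{\otimes m}}$. Only the diagonal terms survive, giving
\begin{equation*}
\mathbf{E}\,\langle D(-L)^{-1}X, DY\rangle_H^2 = \sum_{r=1}^{p\wedge q} \Big(q\,(r-1)!\binom{p-1}{r-1}\binom{q-1}{r-1}\Big)^2 (p+q-2r)!\, \big\|f\widetilde{\otimes}_r g\big\|^2_{H^{\otimes p+q-2r}}.
\end{equation*}
This already has the announced shape; it remains only to analyze the two endpoints of the sum. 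Define $c(r,p,q)$ to be the square of that combinatorial prefactor times $(p+q-2r)!$. For $1 \le r \le (p\wedge q)-1$ every factor is strictly positive, so $c(r,p,q) > 0$.

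**The boundary term $r = p\wedge q$.** The term $r = p\wedge q$ needs a separate look, and this is the one genuinely delicate point. When $p \neq q$, say $p < q = p\wedge q$... no: $p\wedge q = \min$, so if $p<q$ then $r=p$; the prefactor $q\,(p-1)!\binom{p-1}{p-1}\binom{q-1}{p-1}(q-p)!$ is strictly positive, hence $c(p\wedge q,p,q) > 0$. When $p = q$, the top contraction $f\widetilde{\otimes}_p g$ is a scalar, namely $\langle f, g\rangle_{H^{\otimes p}}$ (up to symmetrization, which is trivial for a scalar), and $I_0$ of it is just that constant; moreover $\mathbf{E}(XY) = p!\langle f,g\rangle_{H^{\otimes p}}$. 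So instead of absorbing this term into the sum with a combinatorial constant, I would pull it out explicitly: the $r = p$ contribution equals $\big(p!\,\langle f,g\rangle_{H^{\otimes p}}\big)^2 = (\mathbf{E}(XY))^2$ when $p=q$, which is exactly the term $(\mathbf{E}(XY))^2\mathbf{1}_{p=q}$ in the statement, and correspondingly we set $c(p\wedge q,p,q) = 0$ in the $p=q$ case so as not to double-count. When $p\ne q$, there is no $I_0$ term at all, $\mathbf{E}(XY)=0$, the indicator vanishes, and the $r=p\wedge q$ term stays in the sum with its strictly positive constant. Matching the prefactors I computed above against the normalization in the statement gives the precise values of $c(r,p,q)$, completing the proof. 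The only thing to be careful about is bookkeeping: making sure the symmetrization $\widetilde{\otimes}_r$ versus $\otimes_r$ is handled consistently (the isometry sees only the symmetrization, and $f\widetilde{\otimes}_s\psi$ in the product formula is already symmetrized), and that the $r=p\wedge q$ scalar term is attributed to $(\mathbf{E}(XY))^2$ rather than to the contraction sum.
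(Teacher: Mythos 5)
Your proposal is correct and follows exactly the standard computation that the paper itself invokes: the paper gives no independent proof of Lemma \ref{ll2} but simply cites Lemma 6.2.1 of \cite{NP-book}, and your derivation (write $D(-L)^{-1}I_p(f)=I_{p-1}(f(\cdot,t))$, $DI_q(g)=qI_{q-1}(g(\cdot,t))$, apply the product formula, integrate to turn $s$-contractions into $(s+1)$-contractions, then use orthogonality and the isometry, treating the $r=p\wedge q$ term separately according to whether $p=q$) is precisely that argument. The only cosmetic point is that the intermediate integral should be over the parameter space $T$ (after identifying $H=L^2(T,\mathcal{B},\nu)$, as the paper does elsewhere), not over $H$; everything else, including the attribution of the $I_0$ term to $(\mathbf{E}(XY))^2$ with $c(p\wedge q,p,q)=0$ when $p=q$, is handled correctly.
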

	\noindent {\bf Proof: }See e.g. \cite{NP-book}, Lemma 6.2.1. \qed 
	
	We will also need the celebrated Fourth Moment Theorem proven in \cite{NuPe}. See also \cite{NOT} for point 4. below. 
	
	\begin{theorem} (\cite{NuPe} and  \cite{NP-book})\label{4mom}
		Fix an integer $n\geq 1$. Consider a sequence $(F_k=I_n(f_k), k \geq 1)$ of square integrable random variables in the $n$th Wiener chaos.
		Assume that
		\begin{equation}
			\lim _{k\rightarrow \infty} \mathbf{E}[F_k^2] = \lim _{k\rightarrow \infty}n!  \| f_k\| ^2_{H^{\odot n}} =1.
		\end{equation}
		Then, the following statements are equivalent.
		\begin{enumerate}
			\item \label{4momO1} The sequence of random variables $(F_k=I_n(f_k), k\geq 1)$ converges to the standard normal law in distribution as $k\rightarrow \infty$.
			\item \label{4momO2} $\lim _{k\rightarrow \infty}\mathbf{E}[F_k^4]=3$.
			\item \label{4momO3} $\lim _{k\rightarrow \infty}\| f_k\otimes _l f_k\| _{H^{\otimes 2(n-l)}} =0$ for $l=1,2,\dots ,n-1$.
			\item \label{4momO4} $\| DF_k\| _H^2$ converges to $n$ in $L^2(\Omega)$ as $k\rightarrow \infty$.
		\end{enumerate}
	\end{theorem}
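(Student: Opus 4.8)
The plan is to establish the cycle of implications $(1)\Rightarrow(2)\Rightarrow(3)\Rightarrow(4)\Rightarrow(1)$. The one analytic tool behind almost every step is the product formula for multiple integrals,
\[
I_n(\phi)\,I_n(\psi)=\sum_{r=0}^{n} r!\binom{n}{r}^{2}\, I_{2n-2r}\big(\phi\,\widetilde{\otimes}_r\,\psi\big),\qquad \phi,\psi\in H^{\odot n},
\]
together with the isometry $\mathbf{E}[I_m(\alpha)I_m(\beta)]=m!\,\langle\alpha,\beta\rangle_{H^{\otimes m}}$ and the fact that $D$ and $L$ act diagonally on chaoses, $(-L)^{-1}I_n(f)=\tfrac1n I_n(f)$ and $D_tI_n(f)=nI_{n-1}(f(\cdot,t))$.

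\emph{$(1)\Rightarrow(2)$.} This is the only ``soft'' step, and I would argue it by uniform integrability. Since all the $F_k$ live in the fixed $n$th chaos, hypercontractivity of the Ornstein--Uhlenbeck semigroup gives a constant $c_n$ with $\|F_k\|_{L^8}\le c_n\|F_k\|_{L^2}$; as $\mathbf{E}[F_k^2]\to1$ the sequence $(F_k^4)_k$ is bounded in $L^2(\Omega)$, hence uniformly integrable, and combined with $F_k\to^{(d)}N\sim N(0,1)$ this yields $\mathbf{E}[F_k^4]\to\mathbf{E}[N^4]=3$.

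\emph{$(2)\Rightarrow(3)$.} Expanding $F_k^2=I_n(f_k)^2$ by the product formula and computing $\mathbf{E}[(F_k^2)^2]$ through the isometry produces an exact identity in which $\mathbf{E}[F_k^4]-3(\mathbf{E}[F_k^2])^2$ is a \emph{non-negative} linear combination, with explicit strictly positive combinatorial coefficients, of the quantities $\|f_k\otimes_r f_k\|^2_{H^{\otimes 2(n-r)}}$ and $\|f_k\,\widetilde{\otimes}_r f_k\|^2_{H^{\otimes 2(n-r)}}$ for $r=1,\dots,n-1$. Since $\mathbf{E}[F_k^2]^2\to1$ and $\mathbf{E}[F_k^4]\to3$, this combination tends to $0$, so each term tends to $0$; in particular $\|f_k\otimes_r f_k\|\to0$ for every $r=1,\dots,n-1$, which is $(3)$.

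\emph{$(3)\Rightarrow(4)$.} Writing $\|DF_k\|_H^2=n^2\,\langle I_{n-1}(f_k(\cdot,\cdot)),I_{n-1}(f_k(\cdot,\cdot))\rangle_H$ and applying the product formula once more (using that integrating $f_k(\cdot,t)\otimes_r f_k(\cdot,t)$ in $t$ produces $f_k\otimes_{r+1}f_k$) gives
\[
\|DF_k\|_H^2=n\cdot n!\,\|f_k\|_{H^{\odot n}}^2+n^2\sum_{r=0}^{n-2} r!\binom{n-1}{r}^{2}\, I_{2n-2-2r}\big(f_k\,\widetilde{\otimes}_{r+1}\,f_k\big).
\]
Taking $L^2(\Omega)$-norms, $\mathrm{Var}(\|DF_k\|_H^2)$ is a non-negative combination of the $\|f_k\,\widetilde{\otimes}_s f_k\|^2\le\|f_k\otimes_s f_k\|^2$, $s=1,\dots,n-1$, which tends to $0$ by $(3)$, while the mean $n\cdot n!\|f_k\|^2\to n$ because $n!\|f_k\|^2\to1$; hence $\|DF_k\|_H^2\to n$ in $L^2(\Omega)$, i.e.\ $(4)$. (Lemma~\ref{ll1}, which gives $\|f_k\otimes_r f_k\|_{H^{\otimes 2(n-r)}}=\|f_k\otimes_{n-r}f_k\|_{H^{\otimes 2r}}$ and controls symmetrizations, can also be used to read off the reverse implications and see that $(2),(3),(4)$ are pairwise equivalent, not merely linked in a cycle.)

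\emph{$(4)\Rightarrow(1)$.} Since $\langle D(-L)^{-1}F_k,DF_k\rangle_H=\tfrac1n\|DF_k\|_H^2$, the Stein--Malliavin bound for normal approximation in total variation (e.g.\ from \cite{NP-book}) gives $d_{TV}(F_k,N)\le2\,\mathbf{E}\big|1-\tfrac1n\|DF_k\|_H^2\big|$, and the right-hand side tends to $0$ by $(4)$, so $F_k\to^{(d)}N$.

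I expect the main obstacle to be the exact combinatorial bookkeeping in the two product-formula expansions (of $\mathbf{E}[F_k^4]$ and of $\|DF_k\|_H^2$): identifying precisely which contraction indices appear, and verifying that all the coefficients multiplying $\|f_k\otimes_r f_k\|^2$ and $\|f_k\,\widetilde{\otimes}_r f_k\|^2$ for $1\le r\le n-1$ are strictly positive. This positivity, together with the elementary inequality $\|f_k\,\widetilde{\otimes}_r f_k\|\le\|f_k\otimes_r f_k\|$, is exactly what makes $(2)$, $(3)$ and $(4)$ equivalent rather than merely implying one another; everything else is routine once the product formula and the isometry are in place.
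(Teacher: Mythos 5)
This theorem is stated in the paper with a citation to \cite{NuPe} and \cite{NP-book} rather than proved, and your cycle $(1)\Rightarrow(2)\Rightarrow(3)\Rightarrow(4)\Rightarrow(1)$ is exactly the standard argument from those references: hypercontractivity (equivalence of $L^p$-norms on a fixed chaos) for the moment convergence, the product-formula expansion of $\mathbf{E}[F_k^4]$ whose excess over $3(\mathbf{E}[F_k^2])^2$ is a positive combination of both the symmetrized and unsymmetrized contraction norms, the chaos decomposition of $\|DF_k\|_H^2$ whose variance is controlled by the same contractions, and the Stein--Malliavin bound combined with $\langle D(-L)^{-1}F_k,DF_k\rangle_H=\tfrac1n\|DF_k\|_H^2$. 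Your proof is correct, and the only points needing care are precisely the ones you flag (strict positivity of the combinatorial coefficients and $\|f_k\widetilde{\otimes}_r f_k\|\le\|f_k\otimes_r f_k\|$), both of which are standard.
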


	\subsection{Main result}
	
	In this paragraph, we state our main findings and we discuss some  consequences. The main result of this work states as follows.  The notation  $d_{W}$ below stands for the Wasserstein distance, see (\ref{dw}).
	
	\begin{theorem}\label{tt2}
		Let us consider the integer numbers $p\geq2$, $d\geq 1$. Let $(X_{k}, k\geq 1)$ be a sequence of random variables such that for every $k\geq 1$, $X_{k}= I_{p} (f_{k})$ with $f_{k} \in H ^{\odot p}$. Assume that 
		\begin{equation}
			\label{cc1}
			X_{k}\to ^{(d) }_{k\to \infty} Z \sim N(0, \sigma ^{2}).
		\end{equation}
		Let $(\mathbb{Y} _{k}, k\geq 1)= \left( (Y_{1,k},..., Y_{d,k}), k\geq 1\right) $ be a sequence of random vectors such that,  for every $j=1,...,d,$ the random variable $Y_{j,k}$ belongs to $\mathbb{D} ^{1,2}$, and it admits the chaos expansion
		\begin{equation*}
			Y_{j,k}= \sum_{n=0} ^{\infty} I_{n} (g_{n,k} ^{(j)}) \mbox{ with } g_{n,k} ^{(j)} \in H ^{\odot n}
		\end{equation*}
		and 
		\begin{equation}
			\label{cc4}
			\sup_{k\geq 1}	\sum_{n=M+1}^{\infty}  n!  \Vert g_{n,k} \Vert ^{2}  _{ H ^{\otimes n}} \to _{M\to \infty }0.
		\end{equation}
		Suppose that there exists a random vector $\mathbb{U}$ in $\mathbb{R} ^{d}$ such that 
		\begin{equation}
			\label{cc2}
			\mathbb{Y}_{k} \to _{k\to \infty} \mathbb{U} \mbox{ in } L ^{2}(\Omega).
		\end{equation}
		Then, if 
		\begin{equation}\label{cc3}
			\mathbf{E} X_{k} Y_{j,k} \to_{k\to \infty}0 \mbox { for every } j=1,...,d
		\end{equation}	
		we have 
		\begin{equation}\label{n2}
			(X_{k}, \mathbb{Y}_{k}) \to ^{(d) }_{k\to \infty} (Z', \mathbb{U}),
		\end{equation}
		where $ Z'\sim N(0, \sigma ^{2})$ and $Z'$ is independent by the random vector $\mathbb{U}$. Moreover, for every $k\geq 1$,
		\begin{eqnarray}
			\label{est}
			&&	d_{W} \left( P_{ (X_{k}, Y_{k})}, P_{Z'}\otimes P_{\mathbb{U}}\right) \\
			&&\leq C \left[  \mathbf{E} \left|  \sigma ^{2}- \langle D(-L)^{-1}X_{k}, DX_{k} \rangle  \right|  + \sum_{j=1} ^{d} \mathbf{E} \left| \langle D(-L) ^{-1}X_{k}, DY_{j,k} \rangle _{H} \right| \right] + d_{W} (\mathbb{Y}_{k}, \mathbb{U}).\nonumber
		\end{eqnarray}
	\end{theorem}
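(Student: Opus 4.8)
The strategy is to deduce the joint convergence \eqref{n2} from the quantitative bound \eqref{est}, so the main task is to prove \eqref{est} and then to show that each term on its right-hand side tends to $0$ as $k\to\infty$. The bound \eqref{est} itself should follow by a triangle inequality: writing $\theta_k = P_{(X_k,\mathbb{Y}_k)}$, $\eta_k = P_{Z'}\otimes P_{\mathbb{Y}_k}$ and $\eta = P_{Z'}\otimes P_{\mathbb{U}}$, one has
\begin{equation*}
	d_W\bigl(P_{(X_k,\mathbb{Y}_k)}, P_{Z'}\otimes P_{\mathbb{U}}\bigr) \le d_W(\theta_k,\eta_k) + d_W(\eta_k,\eta).
\end{equation*}
The first term is estimated directly by Theorem \ref{tt1} applied to $X = X_k$ (which is centered since $p\ge 2$) and $\mathbb{Y}=\mathbb{Y}_k$, giving exactly the bracketed expression in \eqref{est}. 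For the second term, since $Z'$ is common to $\eta_k$ and $\eta$ and independent of both $\mathbb{Y}_k$ and $\mathbb{U}$, one checks that $d_W(\eta_k,\eta)\le d_W(P_{\mathbb{Y}_k},P_{\mathbb{U}})$ by a standard coupling argument: given any coupling of $\mathbb{Y}_k$ and $\mathbb{U}$, augment it with an independent copy of $Z'$ to get a coupling of $\eta_k$ and $\eta$, and note the first coordinate contributes nothing to the transport cost. Since $L^2$-convergence \eqref{cc2} implies convergence in Wasserstein distance, $d_W(P_{\mathbb{Y}_k},P_{\mathbb{U}})\to 0$.

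It remains to show the bracketed term in \eqref{est} vanishes as $k\to\infty$. For the first piece, $\mathbf{E}\bigl|\sigma^2 - \langle D(-L)^{-1}X_k, DX_k\rangle\bigr|$: since $X_k = I_p(f_k)$, we have $(-L)^{-1}X_k = \tfrac1p X_k$, so $\langle D(-L)^{-1}X_k, DX_k\rangle = \tfrac1p\|DX_k\|_H^2$. By hypothesis \eqref{cc1} and the Fourth Moment Theorem \ref{4mom} (applied after normalizing, using that $\mathbf{E}[X_k^2]\to\sigma^2$), $\|DX_k\|_H^2$ converges in $L^2(\Omega)$, hence in $L^1(\Omega)$, to $p\sigma^2$; therefore $\tfrac1p\|DX_k\|_H^2 \to \sigma^2$ in $L^1$, which is exactly the vanishing of the first piece. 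Here one uses that $\mathbf{E}[X_k^2]=p!\|f_k\|^2\to\sigma^2$; this follows from \eqref{cc1} together with hypercontractivity on a fixed chaos, which gives uniform integrability of $X_k^2$, so convergence in law upgrades to convergence of second moments.

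The genuinely delicate piece, and the main obstacle, is controlling the cross terms $\mathbf{E}\bigl|\langle D(-L)^{-1}X_k, DY_{j,k}\rangle_H\bigr|$ under the weak hypotheses \eqref{cc4} and \eqref{cc3}. Using the chaos expansion $Y_{j,k}=\sum_{n\ge 0} I_n(g_{n,k}^{(j)})$, split $Y_{j,k} = Y_{j,k}^{\le M} + Y_{j,k}^{>M}$ at a truncation level $M$. For the tail $Y_{j,k}^{>M}$, condition \eqref{cc4} gives $\sup_k \mathbf{E}\bigl[(Y_{j,k}^{>M})^2\bigr]\to 0$ as $M\to\infty$, and combined with the $L^2$-boundedness of $DX_k$ (again from hypercontractivity / the Fourth Moment Theorem) a Cauchy–Schwarz estimate shows the tail contribution to the inner product is uniformly small in $k$ once $M$ is large. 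For the truncated part $Y_{j,k}^{\le M}$, which lives in the finite sum of the first $M$ chaoses, one expands $\langle D(-L)^{-1}X_k, DY_{j,k}^{\le M}\rangle = \sum_{n=1}^{M}\tfrac1n\langle DX_k, DI_n(g_{n,k}^{(j)})\rangle$ and estimates its $L^2$-norm via Lemma \ref{ll2}, which bounds it by a combination of $(\mathbf{E}[X_k I_n(g_{n,k}^{(j)})])^2\,1_{n=p}$ and the norms of contractions $\|f_k\widetilde\otimes_r g_{n,k}^{(j)}\|$. The contraction norms are handled by Lemma \ref{ll1} and the Fourth Moment criterion item \ref{4momO3}: since $\|f_k\otimes_\ell f_k\|\to 0$, a Cauchy–Schwarz-type bound of the form $\|f_k\widetilde\otimes_r g\|\le \|f_k\otimes_{p-r}f_k\|^{1/2}\|g\otimes_{n-r}g\|^{1/2}$ (or the companion estimate from Lemma \ref{ll1}) forces these to vanish, using that the $g_{n,k}^{(j)}$ have bounded norms. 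The only term that survives is the $n=p$ diagonal one, which is $(\mathbf{E}[X_k Y_{p,k}^{(j)}])^2$, and this is controlled because $\mathbf{E}[X_k Y_{j,k}]\to 0$ by \eqref{cc3} while the other chaotic components of $Y_{j,k}$ are orthogonal to $X_k$, so $\mathbf{E}[X_k I_p(g_{p,k}^{(j)})] = \mathbf{E}[X_k Y_{j,k}]\to 0$. Assembling these estimates — tail small uniformly in $k$, then each of the finitely many truncated terms small as $k\to\infty$ — gives the cross term $\to 0$. This is precisely where the technical Lemmas \ref{llkey1} alluded to in the introduction do the work, and the interchange of limits in $k$ and $M$ is the subtle point requiring the uniformity in \eqref{cc4}.
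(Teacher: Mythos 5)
Your derivation of the bound \eqref{est} is fine and coincides with the paper's: triangle inequality through $\eta_k=P_{Z'}\otimes P_{\mathbb{Y}_k}$, Theorem \ref{tt1} for $d_W(\theta_k,\eta_k)$, and $d_W(\eta_k,\eta)\le d_W(P_{\mathbb{Y}_k},P_{\mathbb{U}})$. But your route to the convergence \eqref{n2} --- showing that the bracketed term in \eqref{est} itself tends to zero --- has a genuine gap at the tail estimate. You claim that \eqref{cc4} plus Cauchy--Schwarz makes $\mathbf{E}\bigl|\langle D(-L)^{-1}X_k, DY_{j,k}^{>M}\rangle_H\bigr|$ uniformly small in $k$ for large $M$. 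Cauchy--Schwarz gives a factor $\bigl(\mathbf{E}\Vert DY_{j,k}^{>M}\Vert_H^2\bigr)^{1/2}$, and
\begin{equation*}
\mathbf{E}\Vert DY_{j,k}^{>M}\Vert_H^2=\sum_{n>M} n\, n!\,\Vert g_{n,k}^{(j)}\Vert^2_{H^{\otimes n}},
\end{equation*}
which carries an extra factor $n$ that \eqref{cc4} does not control: \eqref{cc4} bounds only $\sup_k\sum_{n>M} n!\Vert g_{n,k}^{(j)}\Vert^2$. One can have \eqref{cc4} hold and each $Y_{j,k}\in\mathbb{D}^{1,2}$ while $\sup_k\sum_{n>M} n\,n!\Vert g_{n,k}^{(j)}\Vert^2=\infty$ (take $n!\Vert g_{n,k}\Vert^2=n^{-2}$ for $n\le k$, $0$ otherwise), so the claimed uniformity fails. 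This is exactly why the paper does not try to prove that the cross terms with the full $Y_{j,k}$ vanish; the theorem only asserts the bound \eqref{est}, and \eqref{n2} is proved separately at the level of characteristic functions: the truncation error there is $\mathbf{E}\Vert\mathbb{Y}_k-\mathbb{Y}_k^M\Vert_{\mathbb{R}^d}$, which involves no Malliavin derivative and is precisely what \eqref{cc4} controls, while the Stein--Malliavin machinery (Proposition \ref{pp2}) is applied only to the truncated vector $\mathbb{Y}_k^M$.

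A second, lesser problem concerns your treatment of the truncated part. The inequality $\Vert f_k\otimes_r g\Vert^2\le\Vert f_k\otimes_{p-r}f_k\Vert\,\Vert g\otimes_{n-r}g\Vert$ only helps for $1\le r\le p-1$; for chaos components of order $n>p$ the contraction $\Vert f_k\otimes_p g_{n,k}^{(j)}\Vert$ is not covered by it, and boundedness of $\Vert g_{n,k}^{(j)}\Vert$ alone is insufficient --- the counter-example of Section \ref{counter} ($g_k=f_k\widetilde\otimes f_k$) has bounded kernels and the conclusion fails. What saves the argument is that the $L^2$-convergence \eqref{cc2} forces each kernel sequence $(g_{n,k}^{(j)})_k$ to converge in $H^{\otimes n}$, so that point 1 of Lemma \ref{llkey1} (built on the two-step approximation argument of Lemma \ref{llkey}) applies; this is the step your sketch glosses over. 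This part is fixable, but the tail issue in the first paragraph is not fixable within your scheme under the stated hypotheses, and you should switch to the paper's truncate-then-compare-characteristic-functions argument (or strengthen \eqref{cc4} to a $\mathbb{D}^{1,2}$-tail condition, which would prove a different statement).
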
 
	
	Let us make some comment around Theorem \ref{tt2}. 
	
	\begin{itemize}
		\item Condition (\ref{cc4}) is automatically verified when $X_{j,k}$ belongs to a finite sum of Wiener chaoses or when $Y_{j,k}= Y_{j}$ for every $k\geq 1$ (this is stated in Corollary \ref{cor1}). On the other hand, this case (when the components of $\mathbb{Y}_{k}$ are in a finite sum of Wiener chaoses) will be proven before the main result, as a step of the proof of Theorem \ref{tt2}. 
		
		\item In Proposition \ref{pp4}, we show that if the components of $ \mathbb{Y}_{k}$ belong to the sum of the first $q$ Wiener chaoses ($q\leq p$), then it is enough to assume, instead of (\ref{cc2}),  only the convergence in law of $(\mathbb{Y}_{k}, k\geq1)$ in order to obtain (\ref{n2}). 
		
		\item The assumption (\ref{cc4}) also appears in the paper \cite{HN}, in the context of the normal approximation of Wiener space (see also Theorem 6.3.1 in \cite{NP-book}). 
		
		\item The quantitative bound (\ref{est}) is a direct consequence of the results in Section \ref{sec2}. It will be actually used inside the proof of the main result (Theorem \ref{tt2}). 
		
		\item The uncorrelation condition (\ref{cc2}) is obviously crucial for the joint convergence of $	(X_{k}, \mathbb{Y}_{k}) $ in Theorem \ref{tt2}. Another interesting question is what happens if we assume, instead of (\ref{cc3}), that
		\begin{equation*}
			\mathbf{E} X_{k} Y_{j,k} \to_{k\to \infty} c_{j},
		\end{equation*}
		with $c_{j}\not=0 $ for $j=1,...,d$. Can we deduce the joint convergence of $	(X_{k}, \mathbb{Y}_{k}) $ to a random vector with marginals $Z$ and $\mathbb{U}$? In the case when $\mathbb{U}$ follows a Gaussian distribution, the answer is given by the main result in \cite{PeTu}.  In order to give a complete answer, we need to know how to characterize the law of the vector $(Z, \mathbb{U})$ when $Z\sim N(0,\sigma^{2})$ is not independent of $\mathbb{U}$ and the law of $\mathbb{U}$ is not Gaussian.

	\end{itemize}

	Let us state the following corollary of the above theorem.

	\begin{corollary}\label{cor1}
		Consider the sequence $(X_{k}, k\geq 1)$ as in Theorem \ref{tt2} and $\mathbb{Y}=(Y_{1},...,Y_{d})$ be a random vector in $\mathbb{R} ^{d}$. Assume that for every $j=1,...,d$,  $Y_{j} \in \mathbb{D} ^{1,2}$ .
		Also assume 
		\begin{equation}\label{cc6}
			\mathbf{E}X_{k}Y_{j}\to _{k\to \infty}0.
		\end{equation} 
		Then  
		\begin{equation}
			\label{17i-1}
			(X_{k}, \mathbb{Y})\to ^{(d)}(Z', \mathbb{Y})
		\end{equation}
		with $Z'\sim N(0, \sigma ^{2})$ independent of $\mathbb{Y}$  and for $k\geq 1$,  
		\begin{eqnarray}
			\label{est2}
			&&	d_{W} \left( P_{ (X_{k}, Y)}, P_{Z'}\otimes P_{\mathbb{Y}}\right) \\
			&&\leq C \left[  \mathbf{E} \left|  \sigma ^{2}- \langle D(-L)^{-1}X_{k}, DX_{k} \rangle  \right|  + \sum_{j=1} ^{d} \mathbf{E} \left| \langle D(-L) ^{-1}X_{k}, DY_{j} \rangle _{H} \right| \right].\nonumber
		\end{eqnarray}
	\end{corollary}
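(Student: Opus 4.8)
The plan is to derive Corollary \ref{cor1} as a direct specialization of Theorem \ref{tt2} to the case of a constant sequence $\mathbb{Y}_k = \mathbb{Y}$ for all $k \geq 1$. First I would verify that the hypotheses of Theorem \ref{tt2} are satisfied. The convergence (\ref{cc1}) is assumed; the uncorrelation hypothesis (\ref{cc3}) becomes exactly (\ref{cc6}) since $Y_{j,k} = Y_j$; the $L^2$-convergence (\ref{cc2}) holds trivially with $\mathbb{U} = \mathbb{Y}$ and in fact $d_W(\mathbb{Y}_k, \mathbb{U}) = 0$ for every $k$. The one point requiring a short argument is the tail condition (\ref{cc4}). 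Since each $Y_j \in \mathbb{D}^{1,2} \subset L^2(\Omega)$, it has a chaos expansion $Y_j = \sum_{n=0}^\infty I_n(g_n^{(j)})$ with $\sum_{n\geq 0} n!\,\Vert g_n^{(j)}\Vert^2_{H^{\otimes n}} = \mathbf{E}[Y_j^2] < \infty$. Because the sequence of kernels is independent of $k$, the quantity $\sup_{k\geq 1}\sum_{n=M+1}^\infty n!\,\Vert g_{n,k}\Vert^2_{H^{\otimes n}}$ is simply $\sum_{n=M+1}^\infty n!\,\Vert g_n^{(j)}\Vert^2_{H^{\otimes n}}$, which is the tail of a convergent series and hence tends to $0$ as $M \to \infty$. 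Summing over the finitely many $j=1,\dots,d$ preserves this, so (\ref{cc4}) holds.

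With all hypotheses of Theorem \ref{tt2} verified, the conclusion (\ref{n2}) immediately gives $(X_k, \mathbb{Y}) \to^{(d)} (Z', \mathbb{Y})$ with $Z' \sim N(0,\sigma^2)$ independent of $\mathbb{Y}$, which is (\ref{17i-1}). For the quantitative bound (\ref{est2}), I would plug $\mathbb{Y}_k = \mathbb{Y}$ and $\mathbb{U} = \mathbb{Y}$ into estimate (\ref{est}); the term $d_W(\mathbb{Y}_k, \mathbb{U})$ vanishes, leaving precisely the right-hand side of (\ref{est2}).

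There is essentially no obstacle here beyond bookkeeping: the only nontrivial point is the verification of (\ref{cc4}), and even that is just the observation that a constant-in-$k$ family of $L^2$ random variables has uniformly (indeed, identically) controlled chaos tails. If one prefers to avoid invoking Theorem \ref{tt2} as a black box, the bound (\ref{est2}) can alternatively be read off directly from Corollary \ref{cor11} applied to $(X_k, \mathbb{Y})$ (noting $X_k \in \mathbb{D}^{1,4}$ as an element of a fixed Wiener chaos, though for the $L^1$-version in Theorem \ref{tt1} only $\mathbb{D}^{1,2}$ is needed), together with the Fourth Moment Theorem \ref{4mom} to control $\mathbf{E}|\sigma^2 - \langle D(-L)^{-1}X_k, DX_k\rangle|$ and Lemma \ref{ll2} plus the uncorrelation (\ref{cc6}) to control the cross terms $\mathbf{E}|\langle D(-L)^{-1}X_k, DY_j\rangle|$; the convergence in law then follows since the right-hand side of (\ref{est2}) tends to $0$. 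I would present the short route via Theorem \ref{tt2} as the main proof and perhaps remark on the direct route.
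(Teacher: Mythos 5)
Your proposal is correct and follows exactly the paper's route: the paper proves Corollary \ref{cor1} as an immediate consequence of Theorem \ref{tt2}, noting that (\ref{cc4}) is obviously satisfied for a constant sequence $\mathbb{Y}_{k}=\mathbb{Y}$, which is precisely the specialization you carry out (your verification of the chaos-tail condition and the vanishing of $d_{W}(\mathbb{Y}_{k},\mathbb{U})$ just makes the paper's ``obviously'' explicit).
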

\begin{proof}It is an immediate consequence of Theorem \ref{tt2}, since (\ref{cc4}) is obviously satisfied.  \end{proof}
	
	\begin{remark}
		Corollary \ref{cor1} actually says that any sequence in the $p$th Wiener chaos with $p\geq 2$ is asymptotically independent of any (regular enough) $d$-dimensional random vector in $L ^{2}(\Omega, \mathcal{G}, P)$  (with components in $ \mathbb{D} ^{1,2}$)  if the  uncorrelation assumption (\ref{cc6}) is satisfied.  
		
		Let us give a possible explanation of this phenomenon. Since $(X_{k}, k\geq 1) $ satisfies (\ref{cc1}), it follows from Theorem \ref{4mom} that, for $r=1,...,p-1$,
		\begin{equation*}
			\Vert f_{k}\otimes _{r}f_{k} \Vert _{ H ^{\otimes 2p-2r}}\to_{k\to \infty}0.
		\end{equation*}
		Let $h\in H$. Then, by Lemma \ref{ll1} and Cauchy-Schwarz' inequality,
		\begin{eqnarray*}
			&&	\Vert f_{k} \otimes _{1} h\Vert _{ H ^{\otimes p-1}}=\langle f_{k}\otimes _{1} h, f_{k}  \otimes _{1}h\rangle_  { H ^{\otimes p-1}}\\
			&=& \langle f_{k} \otimes _{p-1}f_{k}, h\otimes h\rangle _{ H ^{\otimes 2}}\leq \Vert f_{k} \otimes _{p-1}f_{k}\Vert _{H ^{\otimes 2}} \Vert h\Vert _{H} ^{2}\to_{k\to \infty }0. 
		\end{eqnarray*}
		This intuitively means, taking into account the independence criterion of two multiple integrals proven in \cite{UZ}, that $ X_{k}= I_{p}(f_{k})$ and $W(h)= I_{1}(h)$ are asymptotically independent for any $h\in H$. Then $X_{k}$ is asymptotically independent by any functional of $W$ and by density by any random variable in $ L^{2} (\Omega, \mathcal{G}, P)$ (recall that $\mathcal{G}$ is the sigma-algebra generated by $W$). 
		
	\end{remark}

	\section{Proof of the main result}
	The proof of the main result will be done into several steps. We start with an (intriguing) technical lemma (Lemma \ref{llkey} below) which plays a crucial role in our proofs. Then we prove the result in the case when the components of $\mathbb{Y}_{k}$ belong each of them to a Wiener chaos of fixed order, we continue with the case when these components are in a finite sum of Wiener chaos and finally we conclude the proof of Theorem \ref{tt2}. Our arguments use intensively the auxiliary tools recalled in Section \ref{sec31}, the Lemma \ref{llkey} and the Stein-Malliavin bounds  (\ref{ineq1}),  (\ref{31m-1})  obtained in Section \ref{sec2}.

	\subsection{A key lemma }
	As mentioned, the below lemma is a central point in our  approach. 
	
	\begin{lemma} \label{llkey}Let $p\geq 2$ and $q\geq 1$ be two integer numbers.  Let $(X_{k}, k\geq 1)$ be that such for every $k\geq 1$, $X_{k}= I_{p} (f_{k})$ with $f_{k} \in H ^{\odot p}$.  Assume
		\begin{equation}\label{cc11}
			X_{k} \to ^{(d)}_{k\to \infty} Z\sim N(0, \sigma ^{2}).
		\end{equation}
		Then, for every $g\in H ^{\odot q}$, 
		\begin{equation*}
			\Vert f_{k} \otimes _{r} g\Vert _{ H ^{ p+q-2r}}\to_{k\to \infty}0 \mbox{ for every } \begin{cases} 
				r=1,...,p\wedge q \mbox{ if } p\not=q\\
				r=1,...,(p\wedge q)-1 \mbox{ if }p=q.
			\end{cases} 
		\end{equation*}
	\end{lemma}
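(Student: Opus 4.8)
The plan is to reduce everything to the Fourth Moment Theorem, separating the contractions into an ``easy'' range and one genuinely new index. First note that \ref{cc11} forces $\mathbf{E}[X_{k}^{2}]=p!\,\Vert f_{k}\Vert_{H^{\otimes p}}^{2}\to\sigma^{2}$: convergence in law of a sequence living in a fixed Wiener chaos upgrades, by hypercontractivity (which makes $\{X_{k}^{2}\}$ uniformly integrable), to convergence of the second moments. Hence $I_{p}(f_{k}/\sigma)$ satisfies the hypotheses of Theorem \ref{4mom}, and its third equivalent condition gives $\Vert f_{k}\otimes_{\ell}f_{k}\Vert_{H^{\otimes 2(p-\ell)}}\to 0$ for $\ell=1,\dots ,p-1$; this is exactly where the assumption $p\geq2$ enters (for $p=1$ there is no such $\ell$).

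For $1\leq r\leq p-1$ I would argue directly: by Lemma \ref{ll1} applied with $f_{1}=f_{3}=f_{k}$ and $f_{2}=f_{4}=g$,
$$\Vert f_{k}\otimes_{r}g\Vert_{H^{\otimes p+q-2r}}^{2}=\langle f_{k}\otimes_{p-r}f_{k},\,g\otimes_{q-r}g\rangle_{H^{\otimes 2r}}\leq\Vert f_{k}\otimes_{p-r}f_{k}\Vert_{H^{\otimes 2r}}\,\Vert g\Vert_{H^{\otimes q}}^{2},$$
using the elementary bound $\Vert g\otimes_{q-r}g\Vert\leq\Vert g\Vert^{2}$ and Cauchy--Schwarz; since $1\leq p-r\leq p-1$ the right side goes to $0$. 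When $q\leq p$ this range of $r$ already covers every index appearing in the statement (including the case $p=q$), so it only remains to treat $q>p$ and $r=p$.

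For that case Lemma \ref{ll1} with contraction index $r=p$ yields $\Vert f_{k}\otimes_{p}g\Vert_{H^{\otimes q-p}}^{2}=\langle f_{k}\otimes f_{k},\,g\otimes_{q-p}g\rangle_{H^{\otimes 2p}}$, and I claim this tends to $0$ once we know that $\langle f_{k},h^{\otimes p}\rangle_{H^{\otimes p}}\to0$ for every $h\in H$ (equivalently $f_{k}\rightharpoonup0$ weakly in $H^{\odot p}$, which makes sense because $\Vert f_{k}\Vert$ is bounded). Indeed, if $g=\sum_{i=1}^{n}\alpha_{i}h_{i}^{\otimes q}$ then $g\otimes_{q-p}g=\sum_{i,j}\alpha_{i}\alpha_{j}\langle h_{i},h_{j}\rangle^{q-p}h_{i}^{\otimes p}\otimes h_{j}^{\otimes p}$, so $\langle f_{k}\otimes f_{k},g\otimes_{q-p}g\rangle=\sum_{i,j}\alpha_{i}\alpha_{j}\langle h_{i},h_{j}\rangle^{q-p}\langle f_{k},h_{i}^{\otimes p}\rangle\langle f_{k},h_{j}^{\otimes p}\rangle\to0$; for a general $g\in H^{\odot q}$ one picks such a $g_{\varepsilon}$ with $\Vert g-g_{\varepsilon}\Vert_{H^{\otimes q}}<\varepsilon$ (finite linear combinations of $h^{\otimes q}$ are dense in $H^{\odot q}$) and uses $\Vert f_{k}\otimes_{p}(g-g_{\varepsilon})\Vert_{H^{\otimes q-p}}\leq\Vert f_{k}\Vert_{H^{\otimes p}}\Vert g-g_{\varepsilon}\Vert_{H^{\otimes q}}\leq C\varepsilon$ to get $\limsup_{k}\Vert f_{k}\otimes_{p}g\Vert\leq C\varepsilon$, then lets $\varepsilon\to0$.

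It remains to prove $\langle f_{k},h^{\otimes p}\rangle\to0$, which is the heart (and the ``intriguing'' part) of the lemma. Assuming $\Vert h\Vert=1$, I would contract one slot, $|\langle f_{k},h^{\otimes p}\rangle|=|\langle f_{k}\otimes_{1}h,\,h^{\otimes(p-1)}\rangle_{H^{\otimes(p-1)}}|\leq\Vert f_{k}\otimes_{1}h\Vert_{H^{\otimes(p-1)}}$, and then invoke Lemma \ref{ll1} again (with $f_{1}=f_{3}=f_{k}$, $f_{2}=f_{4}=h$, $r=1$) to obtain $\Vert f_{k}\otimes_{1}h\Vert_{H^{\otimes(p-1)}}^{2}=\langle f_{k}\otimes_{p-1}f_{k},\,h\otimes h\rangle_{H^{\otimes 2}}$. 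Regarding the kernel $f_{k}\otimes_{p-1}f_{k}\in H^{\otimes2}$ as a self-adjoint positive Hilbert--Schmidt operator $T_{k}$ on $H$ (positivity: $\langle T_{k}h,h\rangle=\Vert f_{k}\otimes_{1}h\Vert^{2}\geq0$), one has $\Vert T_{k}\Vert_{\mathrm{op}}\leq\Vert T_{k}\Vert_{\mathrm{HS}}=\Vert f_{k}\otimes_{p-1}f_{k}\Vert_{H^{\otimes 2}}\to0$, hence $\langle f_{k}\otimes_{p-1}f_{k},h\otimes h\rangle\leq\Vert f_{k}\otimes_{p-1}f_{k}\Vert_{H^{\otimes2}}\to0$ uniformly over unit $h$, which finishes the proof. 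The step I expect to require the most care is precisely this last chain: seeing that the a priori problematic index $r=p$ is controlled, through Lemma \ref{ll1}, by the top self-contraction $\Vert f_{k}\otimes_{p-1}f_{k}\Vert$ — which the Fourth Moment Theorem already annihilates — via the passage from the Hilbert--Schmidt norm to the operator norm; after that, only bookkeeping with Lemma \ref{ll1} and the density argument remain, and nothing beyond Theorem \ref{4mom} is needed.
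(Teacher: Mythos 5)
Your proof is correct and takes essentially the same route as the paper: the range $1\le r\le p-1$ is handled by the identity $\Vert f_k\otimes_r g\Vert^2=\langle f_k\otimes_{p-r}f_k,\,g\otimes_{q-r}g\rangle$ together with point 3 of the Fourth Moment Theorem, and the remaining index $r=p$ (for $q>p$) is reduced by a density argument to the bound $\Vert f_k\otimes_1 h\Vert^2=\langle f_k\otimes_{p-1}f_k,h\otimes h\rangle\le\Vert f_k\otimes_{p-1}f_k\Vert_{H^{\otimes 2}}\Vert h\Vert_H^2\to 0$, which is exactly the mechanism of the paper's Step 1. The only differences are cosmetic: you expand over powers $h^{\otimes q}$ (dense by polarization) and phrase the reduction as $\langle f_k,h^{\otimes p}\rangle\to 0$, whereas the paper works with symmetrized basis tensors in $L^2_S(T^q)$ after assuming $H=L^2(T,\mathcal{B},\nu)$, and your Hilbert--Schmidt/operator-norm remark is redundant since Cauchy--Schwarz already suffices.
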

\begin{proof} Without loss of generality, we can assume that $H= L^{2}(T, \mathcal{B}, \nu)$, where $\nu$ is a sigma-finite measure without atoms. 
	
	Let $p> q$. Then the conclusion follows easily from Lemma \ref{ll1} and point 3. in the Fourth Moment Theorem (Theorem \ref{4mom}). Indeed, for every $1\leq r\leq q<p$, 
	\begin{eqnarray}
		&&\Vert f_{k}\otimes _{r} g\Vert ^{2} _{H ^{\otimes p+q-2r}}= \langle  f_{k}\otimes _{r} g,  f_{k}\otimes _{r} g\rangle _{H ^{\otimes p+q-2r}}=\langle f_{k}\otimes_{ p-r}f_{k}, g\otimes _{q-r} g\rangle _{H ^{ \otimes 2r}}\nonumber\\
		&\leq & \Vert f_{k} \otimes _{p-r}f_{k}\Vert _{H ^{\otimes 2r}} \Vert g\otimes _{q-r} g \Vert _{H ^{\otimes 2r}}\label{10a-1}
	\end{eqnarray}
	and $\Vert f_{k} \otimes _{p-k}f_{k}\Vert _{H ^{2r}} \to_{k\to \infty}0 $ by Theorem \ref{4mom} since $1\leq p-r\leq p-1$.  We employ the same  argument holds when $p=q$ and $ 1\leq r\leq p-1.$
	
	\vskip0.2cm
	Assume now $p<q$. If $ 1\leq r\leq p-1$, then the above argument still holds, due to the inequality 
	\begin{equation*}
		\Vert f_{k}\otimes _{r} g\Vert ^{2} _{H ^{\otimes p+q-2r}}\leq \Vert f_{k} \otimes _{p-r}f_{k}\Vert _{H ^{\otimes 2r}} \Vert g\otimes _{q-r} g \Vert _{H ^{\otimes 2r}}
	\end{equation*}
	and of the fact that $1\leq p-r\leq p-1$.
	
	It remains to prove that, for $2\leq p<q$, 
	\begin{equation}
		\label{4a-1}
		\Vert f_{k} \otimes _{p} g\Vert  _{ L ^{2}(T ^{q-p})}\to_{k\to \infty }0. 
	\end{equation}
	To prove (\ref{4a-1}), we will proceed into two steps.
	
	\vskip0.3cm
	
	\noindent {\bf Step 1. } We show that for every $h_{1},..., h_{q} \in H=L ^{2}(T)$, we have 
	\begin{equation*}
		\Vert f_{k}\otimes _{p} \left( h_{1}\tilde{\otimes}.....\tilde{\otimes }h_{q}\right) \Vert   _{ L ^{2}(T ^{q-p})}\to_{k\to \infty }0. 
	\end{equation*}
	We have
	\begin{equation*}
		h_{1}\tilde{\otimes}.....\tilde{\otimes }h_{q}=\frac{1}{q!}\sum_{\sigma \in S_{q}}h_{\sigma(1)}\otimes...\otimes h_{\sigma (q)},
	\end{equation*}
	where $S_{q}$ is the set of permutations of $\{1,...,q\}$. Then, via the definition of the contraction (\ref{contra}), 
	\begin{eqnarray*}
		&&\left(  f_{k}\otimes _{p} \left( h_{1}\tilde{\otimes}.....\tilde{\otimes }h_{q}\right) \right) (t_{1},...,t_{	q-p}) \\
		&=& \frac{1}{q!}\sum_{\sigma \in S_{q}}\int_{T ^{p}} f_{k}(u_{1},...,u_{p}) \left( h_{\sigma(1)}\otimes...\otimes h_{\sigma (q)}\right) (u_{1},...,u_{p}, t_{1},...,t_{q-p}) du_{1}...du_{p}\\
		&=&\frac{1}{q!}\sum_{\sigma \in S_{q}} \left( h_{\sigma (p+1)}\otimes...\otimes h_{\sigma(q)}\right) (t_{1},...,t_{q-p}) \\
		&&\times \int_{T ^{p}}f_{k}(u_{1},...,u_{p})\left( h_{\sigma(1)}\otimes...\otimes h_{\sigma (p)}\right) (u_{1},...,u_{p})du_{1}...du_{p}
		\\
		&=& \frac{1}{q!}\sum_{\sigma \in S_{q}} \left( h_{\sigma (p+1)}\otimes...\otimes h_{\sigma(q)}\right) (t_{1},...,t_{q-p}) \\
		&&\times \int_{T ^{p-1}} \left( \int_{T} f_{k}(u_{1},...,u_{p}) h_{\sigma (1)}(u_{1}) du_{1}\right) \left( h_{\sigma(2)}\otimes...\otimes h_{\sigma (p)}\right) (u_{2},...,u_{p})du_{2}...du_{p}\\
		&=& \frac{1}{q!}\sum_{\sigma \in S_{q}} \left( h_{\sigma (p+1)}\otimes...\otimes h_{\sigma(q)}\right) (t_{1},...,t_{q-p}) \\
		&&\times  \int_{T ^{p-1}}(f_{k}\otimes _{1} h_{\sigma (1)})(u_{2},...,u_{p})\left( h_{\sigma(2)}\otimes...\otimes h_{\sigma (p)}\right) (u_{2},...,u_{p})du_{2}...du_{p}\\
		&=& 	\frac{1}{q!}\sum_{\sigma \in S_{q}} \left( h_{\sigma (p+1)}\otimes...\otimes h_{\sigma(q)}\right) (t_{1},...,t_{q-p}) \langle f_{k}\otimes _{1} h_{\sigma (1)}, h_{\sigma(2)}\otimes...\otimes h_{\sigma (p)}\rangle _{L ^{2}(T ^{p-1})}
	\end{eqnarray*}
	Therefore,
	\begin{eqnarray*}
		&&	\Vert f_{k}\otimes _{p} \left( h_{1}\tilde{\otimes}.....\tilde{\otimes }h_{q}\right) \Vert   _{ L ^{2}(T ^{q-p})}\\
		&\leq & 	\frac{1}{q!}\sum_{\sigma \in S_{q}} \Vert h_{\sigma(p+1)}\Vert _{L ^{2}(T)}....\Vert h_{\sigma(q)}\Vert _{L ^{2}(T)}\left|  \langle f_{k}\otimes _{1} h_{\sigma (1)}, h_{\sigma(2)}\otimes...\otimes h_{\sigma (p)}\rangle _{L ^{2}(T ^{p-1})}\right| 
		\\
		&\leq & 		\frac{1}{q!}\sum_{\sigma \in S_{q}} \Vert h_{\sigma(p+1)}\Vert _{L ^{2}(T)}....\Vert h_{\sigma(q)}\Vert _{L ^{2}(T)} \Vert f_{k}\otimes _{1} h_{\sigma (1)}\Vert _{L ^{2}(T ^{p-1})} \Vert h_{\sigma(2)}\otimes...\otimes h_{\sigma (p)}\Vert  _{L ^{2}(T ^{p-1})}\\
		&\leq & 	\frac{1}{q!}\sum_{\sigma \in S_{q}} \Vert h_{\sigma(1)}\Vert _{L ^{2}(T)}.... \Vert h_{\sigma(q)}\Vert _{L ^{2}(T)} \sqrt{ \Vert f_{k}\otimes _{p-1} \otimes f_{k}\Vert _{ L ^{2} (T ^{2})}},
	\end{eqnarray*}
	where we used Lemma \ref{ll1} and Cauchy-Schwarz's inequality. We obtained
	\begin{equation*}
		\Vert f_{k}\otimes _{p} \left( h_{1}\tilde{\otimes}.....\tilde{\otimes }h_{q}\right) \Vert   _{ L ^{2}(T ^{q-p})}\leq \left( \prod_{j=1}^{q} \Vert h_{j}\Vert _{L ^{2}(T)}\right)  \sqrt{ \Vert f_{k}\otimes _{p-1} \otimes f_{k}\Vert _{ L ^{2} (T ^{2})}},
	\end{equation*}
	and this goes to zero as $k\to \infty$ by point 3. in Theorem \ref{4mom}. 
	
	\vskip0.3cm
	
	\noindent {\bf Step 2. } We prove the claim (\ref{4a-1}) for $g\in L^{2}_{S}(T^{q})$ (the set of symmetric functions in $ L^{2}(T ^{q})$).  Consider the sequence $ (g^{M}, M\geq 1)$ given by 
	\begin{equation*}
		g^{M}=\sum_{j_{1},...,j_{q}=1}^{M}\langle g, h_{j_{1}}\otimes....h_{j_{q}}\rangle _{ L^{2}(T ^{q})}h_{j_{1}}\otimes....\otimes h_{j_{q}}=\sum_{j_{1},...,j_{q}=1}^{M}\langle g, h_{j_{1}}\otimes....h_{j_{q}}\rangle _{ L^{2}(T ^{q})}h_{j_{1}}\widetilde{\otimes}....\widetilde{\otimes} h_{j_{q}}
	\end{equation*}
	where $(h_{i}, i\geq 1) $ is an orthonormal basis of $H=L^{2}(T)$. Then $g^{M}$ are symmetric functions and $\Vert g^{M}-g\Vert _{L ^{2}(T^{q})} \to_{M\to \infty}0$.  We write 
	\begin{equation*}
		f_{k}\otimes _{p}g= f_{k}\otimes _{p} g ^{M} + f_{k}\otimes _{p}g-f_{k}\otimes _{p}g^{M}
	\end{equation*}
	and
	\begin{equation}\label{4a-2}
		\Vert 	f_{k}\otimes _{p}g\Vert _{L ^{2}(T ^{q-p})}\leq \Vert f_{k}\otimes _{p} g ^{M}  \Vert _{L ^{2}(T ^{q-p})}+\Vert f_{k}\otimes _{p}g-f_{k}\otimes _{p}g^{M}\Vert _{L ^{2}(T ^{q-p})}.
	\end{equation}
	Now, for every $M\geq 1$,
	\begin{eqnarray}
		&&	\Vert f_{k}\otimes _{p}g-f_{k}\otimes _{p}g^{M}\Vert _{L ^{2}(T ^{q-p})}=\Vert f_{k}\otimes _{p} (g-g^{M})\Vert  _{L ^{2}(T ^{q-p})}\nonumber \\
		&\leq & \Vert f_{k}\Vert  _{L ^{2}(T ^{p})}\Vert g-g ^{M}\Vert  _{L ^{2}(T ^{q})}\leq C \Vert g-g ^{M}\Vert  _{L ^{2}(T ^{q})}. \label{4a-3}
	\end{eqnarray}
	We used the fact that, by (\ref{cc11}), $q!\Vert f_{k}\Vert ^{2} _{L ^{2}(T ^{p})}\to_{k\to \infty} \sigma ^{2}$ so the sequence $(f_{k}, k\geq 1)$  is  bounded in $ L^{2}(T ^{p})$. 
	
	Let $\eps>0$. By (\ref{4a-3}),  there exists $M_{0}\geq 1$ such that for any $M\geq M_{0} $
	
	\begin{eqnarray}\label{4a-4}
		\Vert f_{k}\otimes _{p}g-f_{k}\otimes _{p}g^{M}\Vert _{L ^{2}(T ^{q-p})}\leq \frac{\eps}{2}.
	\end{eqnarray}
	Take $M\geq M_{0}$. Then 
	\begin{equation*}
		f_{k}\otimes _{p} g ^{M}=\sum_{j_{1},...,j_{q}=1}^{M}\langle g, h_{j_{1}}\otimes....h_{j_{q}}\rangle _{ L^{2}(T ^{q})}\left( f_{k}\otimes _{p} (h_{j_{1}}\widetilde{\otimes}....\widetilde{\otimes} h_{j_{q}})\right).
	\end{equation*}By Step 1, 
	\begin{eqnarray*}
		\Vert f_{k}\otimes _{p} g ^{M}  \Vert _{L ^{2}(T ^{q-p})}\to_{k\to \infty} 0,
	\end{eqnarray*}
	so for $k$ large enough,
	\begin{equation}\label{4a-5}
		\Vert f_{k}\otimes _{p} g ^{M}  \Vert _{L ^{2}(T ^{q-p})}\leq \frac{\eps}{2}. 
	\end{equation}
	By plugging (\ref{4a-4}) and (\ref{4a-5}) into (\ref{4a-2}), we get the claim (\ref{4a-1}). 
	\end{proof}
	
	We state an immediate consequence of Lemma \ref{llkey}

		\begin{lemma} \label{llkey1}Let $p\geq 2$ and $q\geq 1$ be two integer numbers.  Let $(X_{k}, k\geq 1)$ be that such for every $k\geq 1$, $X_{k}= I_{p} (f_{k})$ with $f_{k} \in H ^{\odot p}$.  Assume (\ref{cc1}). 
	
\begin{enumerate}
	\item 
	Let $(g_{k}, k\geq 1)$ (with $g_{k}\in H ^{\odot q}$ for every $k\geq 1$) be a sequence that converges in $ H ^{\odot q}$. Then 
		\begin{equation}\label{n1}
			\Vert f_{k} \otimes _{r} g_{k}\Vert _{ H ^{ p+q-2r}}\to_{k\to \infty}0 \mbox{ for every } \begin{cases} 
				r=1,...,p\wedge q \mbox{ if } p\not=q\\
				r=1,...,(p\wedge q)-1 \mbox{ if }p=q.
			\end{cases} 
		\end{equation}
	\item 	Let $(g_{k}, k\geq 1)$ (with $g_{k}\in H ^{\odot q}$ for every $k\geq 1$) be a sequence bounded in $ H ^{\otimes q}$.  Assume $ q\leq p$.  Then (\ref{n1})  holds true.
\end{enumerate}
	\end{lemma}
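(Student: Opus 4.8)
The plan is to read off both parts from Lemma~\ref{llkey}, Lemma~\ref{ll1} and the Fourth Moment Theorem, with no new idea required. Throughout I would use, exactly as in the proof of Lemma~\ref{llkey}, that (\ref{cc1}) forces $p!\,\Vert f_{k}\Vert_{H^{\otimes p}}^{2}\to\sigma^{2}$, so that $(f_{k})$ is bounded in $H^{\otimes p}$, together with the elementary contraction inequality $\Vert f\otimes_{r}h\Vert_{H^{\otimes a+b-2r}}\le\Vert f\Vert_{H^{\otimes a}}\Vert h\Vert_{H^{\otimes b}}$ (Cauchy--Schwarz applied to the inner integrals) valid for $f\in H^{\otimes a}$, $h\in H^{\otimes b}$, $0\le r\le a\wedge b$. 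I will also use that, applying Theorem~\ref{4mom} to $X_{k}/\sigma$, one has $\Vert f_{k}\otimes_{\ell}f_{k}\Vert_{H^{\otimes 2(p-\ell)}}\to 0$ for every $\ell=1,\dots,p-1$.

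For part~1, let $g=\lim_{k}g_{k}$ in $H^{\odot q}$ and fix an admissible $r$ as in (\ref{n1}), so that $r\le p\wedge q$. I would write
\begin{equation*}
f_{k}\otimes_{r}g_{k}=f_{k}\otimes_{r}g+f_{k}\otimes_{r}(g_{k}-g),
\end{equation*}
whence
\begin{equation*}
\Vert f_{k}\otimes_{r}g_{k}\Vert_{H^{\otimes p+q-2r}}\le\Vert f_{k}\otimes_{r}g\Vert_{H^{\otimes p+q-2r}}+\Vert f_{k}\Vert_{H^{\otimes p}}\Vert g_{k}-g\Vert_{H^{\otimes q}}.
\end{equation*}
The first term tends to $0$ by Lemma~\ref{llkey} applied to the fixed kernel $g$, and the second tends to $0$ because $(f_{k})$ is bounded in $H^{\otimes p}$ while $\Vert g_{k}-g\Vert_{H^{\otimes q}}\to 0$. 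This yields (\ref{n1}).

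For part~2 there is no fixed limit to compare against, so I would argue directly. The key observation is that, when $q\le p$, every index $r$ occurring in (\ref{n1}) satisfies $1\le r\le p-1$: if $p\ne q$ then $r\le p\wedge q=q\le p-1$, and if $p=q$ then $r\le(p\wedge q)-1=p-1$ by definition; hence also $1\le p-r\le p-1$. Applying Lemma~\ref{ll1} with $f_{1}=f_{3}=f_{k}\in H^{\odot p}$ and $f_{2}=f_{4}=g_{k}\in H^{\odot q}$ gives
\begin{equation*}
\Vert f_{k}\otimes_{r}g_{k}\Vert_{H^{\otimes p+q-2r}}^{2}=\langle f_{k}\otimes_{p-r}f_{k},\,g_{k}\otimes_{q-r}g_{k}\rangle_{H^{\otimes 2r}}\le\Vert f_{k}\otimes_{p-r}f_{k}\Vert_{H^{\otimes 2r}}\,\Vert g_{k}\Vert_{H^{\otimes q}}^{2},
\end{equation*}
using Cauchy--Schwarz and $\Vert g_{k}\otimes_{q-r}g_{k}\Vert_{H^{\otimes 2r}}\le\Vert g_{k}\Vert_{H^{\otimes q}}^{2}$. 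Since $(g_{k})$ is bounded in $H^{\otimes q}$ and $\Vert f_{k}\otimes_{p-r}f_{k}\Vert_{H^{\otimes 2r}}\to 0$ (as $1\le p-r\le p-1$), the right-hand side tends to $0$, which is (\ref{n1}).

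I do not expect a genuine obstacle, since the statement really is a corollary of Lemma~\ref{llkey}. The only points requiring care are bookkeeping: in part~1, that the ``remainder'' contraction $f_{k}\otimes_{r}(g_{k}-g)$ is dominated by a product of norms, so that mere $L^{2}$-boundedness of $(f_{k})$ suffices; and in part~2, that the hypothesis $q\le p$ is precisely what keeps the contraction order $p-r$ strictly between $1$ and $p-1$, so that Theorem~\ref{4mom} applies. Without $q\le p$, the borderline term $f_{k}\otimes_{p}g_{k}$ (the case $r=p<q$ handled by Steps~1--2 in the proof of Lemma~\ref{llkey}) would reappear, and boundedness of $(g_{k})$ alone would not control it.
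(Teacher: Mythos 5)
Your proposal is correct and follows essentially the same route as the paper: part 1 via the decomposition $f_{k}\otimes_{r}g_{k}=f_{k}\otimes_{r}g+f_{k}\otimes_{r}(g_{k}-g)$ together with boundedness of $(f_{k})$ and Lemma \ref{llkey}, and part 2 via the identity from Lemma \ref{ll1} and Cauchy--Schwarz, reducing to $\Vert f_{k}\otimes_{p-r}f_{k}\Vert\to 0$ from the Fourth Moment Theorem. Your explicit check that $q\leq p$ forces $1\leq p-r\leq p-1$ is exactly the point the paper uses implicitly.
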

\begin{proof}Denote by $g$ the limit in $ H ^{\otimes q}$ of the sequence $(g_{k}, k\geq 1)$. Then, for $r=1,...,p\wedge q $ (if $p\not=q$) or $r=1,..., p\wedge q -1$ (when $p=q$), we have
\begin{eqnarray*}
	\Vert f_{k} \otimes _{r} g_{k} \Vert _{H ^{\otimes p+q-2r}}&\leq & 	\Vert f_{k} \otimes _{r} g \Vert _{H ^{\otimes p+q-2r}}+ 	\Vert f_{k} \otimes _{r} (g_{k}-g) \Vert _{H ^{\otimes p+q-2r}}\\
	&\leq & 	\Vert f_{k} \otimes _{r} g \Vert _{H ^{\otimes p+q-2r}}+ \Vert f_{k}\Vert _{ H ^{\otimes p}}\Vert g_{k}-g\Vert _{H ^{\otimes q}}\\
	&\leq & 	\Vert f_{k} \otimes _{r} g \Vert _{H ^{\otimes p+q-2r}}+C\Vert g_{k}-g\Vert _{H ^{\otimes q}},
\end{eqnarray*}
since $(f_{k}, k\geq 1)$ is bounded in $ H ^{\otimes p}$. It suffices to apply Lemma \ref{llkey} to conclude point 1. Point 2. follows immediately from the bound (\ref{10a-1}) since
\begin{eqnarray*}
	&&\Vert f_{k}\otimes _{r} g_{k}\Vert ^{2} _{H ^{\otimes p+q-2r}}
		\leq  \Vert f_{k} \otimes _{p-r}f_{k}\Vert _{H ^{\otimes 2r}} \Vert g_{k}\otimes _{q-r} g_{k} \Vert _{H ^{\otimes 2r}}\\
		&&\leq  \Vert f_{k} \otimes _{p-r}f_{k}\Vert _{H ^{\otimes 2r}} \Vert g_{k}\Vert _{ H ^{\otimes q}}^{2} \leq C  \Vert f_{k} \otimes _{p-r}f_{k}\Vert _{H ^{\otimes 2r}}.
\end{eqnarray*}. \end{proof}

	\subsection{The proof of the main result when the components of $\mathbb{Y}_{k}$ belongs to a Wiener chaos}
	
	Let us make a first step to prove the main result, by dealing with the case when the random vector $\mathbb{Y}_{k}$ from the statement of Theorem \ref{tt2} has components that belong each of them in a Wiener chaos of fixed (but possibly different) order. 
	
	\begin{prop}
		\label{pp1}
		Let $p\geq 2$ and let $q_{1},...,.q_{d}\geq 1$ be integer numbers. Assume that $ (X_{k}, k\geq 1)$ is such that $ X_{k}= I_{p} (f_{k}), f_{k}\in H ^{\odot p}$ and (\ref{cc1}) holds true. Let $(\mathbb{Y} _{k}, k\geq 1)= \left( (Y_{1,k},..., Y_{d,k}), k\geq 1\right) $ be a sequence of random vectors such that
		for every $k\geq 1, j=1,...,d$, 
		\begin{equation*}
			Y_{j,k}= I_{q_{j}} (g_{j,k}) \mbox{ with } g_{j,k}\in H ^{\odot q_{j}}.
		\end{equation*}
		Suppose  (\ref{cc2}) and (\ref{cc3}). Then 
		\begin{equation}\label{6a-1}
			(X_{k}, \mathbb{Y}_{k}) \to ^{(d) }_{k\to \infty} (Z', \mathbb{U}),
		\end{equation}
		where $ Z'\sim N(0, \sigma ^{2})$ and $Z'$ is independent by $\mathbb{U}$. Moreover, we have the estimate (\ref{est}).
	\end{prop}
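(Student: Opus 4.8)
The plan is to reduce Proposition \ref{pp1} to an application of Corollary \ref{cor11} together with the key Lemma \ref{llkey1}, plus a standard tightness/characteristic-function argument to pass from the vector $(X_k,\mathbb{Y}_k)$ to its limit. First I would apply Corollary \ref{cor11} (valid since $X_k$ and the $Y_{j,k}$ are multiple integrals, hence in $\mathbb{D}^{1,4}$) to bound $d_W(P_{(X_k,\mathbb{Y}_k)},P_{Z}\otimes P_{\mathbb{Y}_k})$ by
\begin{equation*}
C\Bigl[\bigl(\mathbf{E}\,|\sigma^2-\langle D(-L)^{-1}X_k,DX_k\rangle|^2\bigr)^{1/2}+\sum_{j=1}^d\bigl(\mathbf{E}\,|\langle D(-L)^{-1}X_k,DY_{j,k}\rangle|^2\bigr)^{1/2}\Bigr].
\end{equation*}
For the first term I would invoke Lemma \ref{ll2} with $X=Y=X_k$: it gives $\mathbf{E}\langle D(-L)^{-1}X_k,DX_k\rangle^2=(\mathbf{E}X_k^2)^2+\sum_{r=1}^{p-1}c(r,p,p)\|f_k\widetilde\otimes_r f_k\|^2$, and since $X_k\to Z$ in law we have $\mathbf{E}X_k^2=p!\|f_k\|^2\to\sigma^2$ (convergence of second moments in a fixed chaos) and $\|f_k\otimes_r f_k\|\to0$ for $r=1,\dots,p-1$ by point 3 of the Fourth Moment Theorem (Theorem \ref{4mom}); hence this term tends to $0$.

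For the cross terms I would again use Lemma \ref{ll2}, now with $X=X_k=I_p(f_k)$ and $Y=Y_{j,k}=I_{q_j}(g_{j,k})$: the right-hand side is $(\mathbf{E}X_kY_{j,k})^2\mathbf{1}_{p=q_j}+\sum_{r=1}^{p\wedge q_j}c(r,p,q_j)\|f_k\widetilde\otimes_r g_{j,k}\|^2$. The first piece goes to $0$ by the uncorrelation hypothesis (\ref{cc3}). For the contraction norms, note that the hypothesis $\mathbb{Y}_k\to\mathbb{U}$ in $L^2(\Omega)$ forces, for each fixed $j$, that $q_j!\,\mathbf{E}Y_{j,k}^2=q_j!\|g_{j,k}\|^2$ stays bounded (indeed converges), so $(g_{j,k})_k$ is bounded in $H^{\otimes q_j}$; then point 2 of Lemma \ref{llkey1} applies when $q_j\le p$, giving $\|f_k\otimes_r g_{j,k}\|\to0$ for all the relevant $r$, so the cross term vanishes in the case $q_j\le p$. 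When $q_j>p$ the same Lemma \ref{llkey1} point 2 does not directly apply; here I would use instead that in fact the $L^2$-convergence $Y_{j,k}\to U_j$ implies $(g_{j,k})_k$ converges in $H^{\otimes q_j}$ (the projection of $Y_{j,k}$ onto the $q_j$th chaos must converge, since the whole sequence is Cauchy in $L^2$ and chaoses are orthogonal), and then point 1 of Lemma \ref{llkey1} handles all $r=1,\dots,p\wedge q_j$ (with the $r=p$ boundary being the hard case that Lemma \ref{llkey} was designed for). Either way, $d_W(P_{(X_k,\mathbb{Y}_k)},P_{Z}\otimes P_{\mathbb{Y}_k})\to0$, and this establishes the estimate (\ref{est}) once we also bound $d_W(P_Z\otimes P_{\mathbb{Y}_k},P_{Z}\otimes P_{\mathbb{U}})\le d_W(P_{\mathbb{Y}_k},P_{\mathbb{U}})$ by a coupling argument.

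Finally, to get the convergence in law (\ref{6a-1}) I would combine the three facts: $d_W(P_{(X_k,\mathbb{Y}_k)},P_Z\otimes P_{\mathbb{Y}_k})\to0$, $\mathbb{Y}_k\to\mathbb{U}$ in $L^2(\Omega)$ (hence in law, hence $d_W(P_{\mathbb{Y}_k},P_{\mathbb{U}})\to0$, using that $L^2$-convergence controls the Wasserstein-1 distance), and the triangle inequality for $d_W$ on $\mathbb{R}^{d+1}$, to conclude $d_W(P_{(X_k,\mathbb{Y}_k)},P_{Z'}\otimes P_{\mathbb{U}})\to0$ where $Z'\sim N(0,\sigma^2)$ is independent of $\mathbb{U}$; convergence in $d_W$ implies convergence in distribution. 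I expect the main obstacle to be the cross terms $\|f_k\widetilde\otimes_r g_{j,k}\|$ in the regime $r=p<q_j$: unlike the $r<p$ contractions, these cannot be dominated by $\|f_k\otimes_{p-r}f_k\|$ via Lemma \ref{ll1}, which is exactly why Lemma \ref{llkey} (with its two-step approximation of $g$ by tensor products of basis elements) is needed — and invoking it correctly requires first knowing that $(g_{j,k})$ converges, not merely that it is bounded, so I must be careful to extract that convergence from the $L^2(\Omega)$-convergence hypothesis (\ref{cc2}) rather than only from boundedness.
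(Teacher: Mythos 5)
Your proposal is correct and follows essentially the same route as the paper: the Stein--Malliavin bound (\ref{31m-1}), Lemma \ref{ll2} to expand the cross terms, Lemma \ref{llkey1} (fed by the convergence of the kernels $g_{j,k}$ extracted from the $L^{2}(\Omega)$ hypothesis (\ref{cc2})) to kill the contraction norms, the Fourth Moment Theorem for the term $\mathbf{E}(\sigma^{2}-\langle D(-L)^{-1}X_{k},DX_{k}\rangle)^{2}$, and the triangle inequality $d_{W}(P_{Z}\otimes P_{\mathbb{Y}_{k}},P_{Z}\otimes P_{\mathbb{U}})\leq \mathbf{E}\Vert \mathbb{Y}_{k}-\mathbb{U}\Vert_{\mathbb{R}^{d}}$ to conclude. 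Your explicit case split ($q_{j}\leq p$ via boundedness and point 2 of Lemma \ref{llkey1}, $q_{j}>p$ via convergence of the kernels and point 1) is only a more detailed bookkeeping of the same argument the paper runs uniformly through Lemma \ref{llkey1}.
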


\begin{proof}  We first notice that (\ref{est}) is a direct consequence of (\ref{31m-1}) of the triangle's inequality.  Indeed, for every $k\geq 1$,
	\begin{eqnarray}
		d_{W}\left( P_{(X_{k}, \mathbb{Y}_{k})}, P_{Z'}\otimes P_{\mathbb{U}}\right)&\leq& 	d_{W}\left( P_{(X_{k}, \mathbb{Y}_{k})},  P_{Z'}\otimes P_{ \mathbb{Y}_{k}}\right)+d_{W} \left(  P_{Z'}\otimes P_{ \mathbb{Y}_{k}}, P_{Z'}\otimes P_{\mathbb{U}}\right)\nonumber\\
		&\leq & d_{W}\left( P_{(X_{k}, \mathbb{Y}_{k})},  P_{Z'}\otimes P_{ \mathbb{Y}_{k}}\right)+ d_{W} \left( P_{\mathbb{Y}_{k}}, P_{\mathbb{U}}\right)\nonumber \\
		&\leq & d_{W}\left( P_{(X_{k}, \mathbb{Y}_{k})},  P_{Z'}\otimes P_{ \mathbb{Y}_{k}}\right)+ \mathbf{E} \Vert \mathbb{Y}_{k}-\mathbb{U}\Vert _{ \mathbb{R} ^{d}}.\label{n4}
	\end{eqnarray}
	and then we use (\ref{31m-1}). For the rest of the proof, we will again proceed into several steps.
	
	\vskip0.2cm 
	
	\noindent {\bf Step 1. }We prove that for every $j=1,...,d$,
	\begin{equation}\label{30m-4}
		\mathbf{E} \langle D(-L) ^{-1} X_{k}, DY_{j,k} \rangle ^{2}\to _{k\to \infty}0. 
	\end{equation}
	By Lemma \ref{ll2}, we have, for every $k\geq 1$ and $j=1,...,d$, 
	\begin{equation}\label{30m-1}
		\mathbf{E} \langle D(-L) ^{-1} X_{k}, DY_{j,k} \rangle ^{2}=(\mathbf{E}X_{k} Y_{j,k})^{2} 1_{p=q_{j}} + \sum_{r=1}^{p\wedge q_{j}}  c(r,p,q_{j}) \Vert f_{k}\widetilde{\otimes}_{r} g_{j,k} \Vert ^{2} _{ H ^{\otimes p+q_{j}-2r}},
	\end{equation}
	where $c(r,p,q_{j})$ are as in Lemma \ref{ll2}. In particular, recall that $c(p\wedge q_{j}, p, q_{j})=0$ if $p\not=q_{j}$.  By Lemma \ref{llkey1},
	\begin{equation}\label{6a-7}
		\Vert f_{k}\widetilde{\otimes}_{r} g_{j,k} \Vert ^{2} _{ H ^{\otimes p+q_{j}-2r}}\leq 	\Vert f_{k}\otimes_{r} g_{j,k} \Vert ^{2} _{ H ^{\otimes p+q_{j}-2r}}\to_{k\to \infty}0
	\end{equation}
	for every $r=1,..., p\wedge q_{j}$ (if $ p\not=q_{j}$)  and $r=1,...,(p\wedge q_{j})-1$ (if $p=q_{j}$).  The relation (\ref{6a-7}) and the assumption (\ref{cc3}) imply the conclusion (\ref{30m-4}) of this step.

	\vskip0.2cm 
	
	\noindent {\bf Step 2. }Let us use the notation 
	\begin{equation}
		\label{not}
		\theta_{k}= P_{ (X_{k}, \mathbb{Y}_{k})}, \hskip0.2cm \eta_{k}= P_{ Z}\otimes P_{ \mathbb{Y}_{k}}, \hskip0.2cm \eta =P_{ Z}\otimes P_{\mathbb{U}}.
	\end{equation}
	In this step, we prove that
	\begin{equation}\label{30m-5}
		d_{W} (\theta_{k}, \eta_{k}) \to _{k\to \infty}0. 
	\end{equation}
	We know from (\ref{31m-1}) that 
	\begin{equation}\label{30m-8}
		d_{W}(\theta_{k}, \eta _{k}) \leq C \left[ \left( \mathbf{E} \left( \langle D(-L) ^{-1} X_{k}, DX_{k}\rangle -\sigma ^{2} \right) ^{2} \right) ^ {\frac{1}{2}} 
		+\sum_{j=1}^{d}  \left( \mathbf{E} \langle D(-L) ^{-1} X_{k}, DY_{j,k} \rangle ^{2} \right) ^{\frac{1}{2}}\right] 
	\end{equation}
	The assumption  (\ref{cc1}) and the Fourth Moment Theorem implies that (see Section 5 in \cite{NP-book}),
	\begin{equation*}
		\mathbf{E} \left( \langle D(-L) ^{-1} X_{k}, DX_{k}\rangle -\sigma ^{2}\right) ^{2} \to_{k\to \infty}0.
	\end{equation*}
	This fact, together with  Step 1, implies (\ref{30m-5}).

The conclusion is obtained by Step 2 and the bound (\ref{n4}).
 \end{proof}

	\begin{remark}\label{rem3}
		It is possible to write a quantitative bound for $d_{W}(\theta_{k}, \eta)$ in terms of the norms of the contractions of the kernels $f_{k}$ and $g_{j, k}$ (with the notation from Proposition \ref{pp1}). Indeed, assume $d=1$ and $q_{1}=q$. Then, by using (\ref{est}), Lemma \ref{ll2}, the inequality (\ref{10a-1}) and the fact that a sequence of random variables that converges in distribution in bounded in $ L ^{r}(\Omega)$ for every $r>1$ (see \cite{Ja} or \cite{NP-book}),  we can write 
		\begin{eqnarray*}
			d_{W}(\theta_{k}, \eta)&\leq& C\left[ (\mathbf{E}X_{k}Y_{k})^{2} 1_{p=q} + \sum_{r=1} ^{p-1} \Vert f_{k}\otimes _{r} f_{k}\Vert ^{2}_{ H ^{\otimes 2p-2r}}+ \sum_{r=1}^{ (p\wedge q)-1} \Vert f_{k}\otimes _{p-r}f_{k}\Vert _{ H ^{\otimes 2r}} \right. \\
			&&\left. + \Vert f_{k}\otimes _{q}  f_{k}\Vert _{ H ^{\otimes p-q}}1_{p>q} + \Vert f_{k}\otimes _{p}g_{k} \Vert ^{2}_{ H ^{ q-p}}1_{p<q}.\right] ^{\frac{1}{2}}.
		\end{eqnarray*}
		Taking into account point 3. in Theorem \ref{4mom},we can also write, for $k$ large enough, 
		\begin{eqnarray}\label{10a-2}
			d_{W}(\theta_{k}, \eta)\leq C\left[  \langle f_{k}, g_{k} \rangle ^{2}_{ H ^{\otimes p}}1_{p=q} + \sum_{r=1} ^{p-1}  \Vert f_{k}\otimes _{r} f_{k}\Vert _{ H ^{\otimes 2p-2r}}+\Vert f_{k}\otimes_{p} g_{k} \Vert ^{2}_{ H ^{ q-p}}1_{p<q}\right] ^{\frac{1}{2}}.
		\end{eqnarray}
		The above bound may be not optimal in some cases (see Remark \ref{rem4} in Section \ref{sec5}).
	\end{remark}

	\subsection{The components of $\mathbb{Y}_{k}$ belong to a finite sum of Wiener chaoses}
	Let us first notice that if a sequence of random variables $ (Y_{k}, k\geq 1)$ converges in $ L^{2} (\Omega)$ as $k\to \infty$ and 
	\begin{equation*}
		Y_{k}= \sum _{n=0} ^{\infty} I_{n} (g_{n,k}),  \hskip0.3cm g_{n,k}\in H ^{\odot n},
	\end{equation*}
then for every $n\geq 1$, the sequence $(g_{n,k}, k\geq 1)$ converges in $ H ^{\otimes n}$. 

We make a further step to get the main result by extending the result in Proposition \ref{pp1}.
	
	\begin{prop}\label{pp2}
		Assume that the sequence $(X_{k}, k\geq 1)$ is as in Proposition \ref{pp1} and let $\mathbb{Y}_{k}= (Y_{1,k},...,Y_{d,k})$  be such that for every $j=1,...,d$ and for every $k\geq 1$,
		\begin{equation*}
			Y_{j,k}=\sum_{n=0} ^{N_{0}} I_{n}(g_{n,k}^{(j)}) ,
		\end{equation*}
		with $N_{0}\geq 1$, $g_{n,k}^{(j)}\in H ^{\odot n}$ for $n\geq 0, k\geq 1$ and $=1,...,d$. Assume (\ref{cc2}) and (\ref{cc3}). Then
		\begin{equation}\label{5a-5}
			(X_{k}, \mathbb{Y}_{k})\to ^{(d)} _{k\to \infty} (Z', \mathbb{U})
		\end{equation}
		where $Z\sim N(0, \sigma ^{2}) $ and $Z',\mathbb{U}$ are independent. Moreover, the estimate (\ref{est}) holds true.
	\end{prop}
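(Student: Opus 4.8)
The plan is to mirror the proof of Proposition \ref{pp1}, treating separately each chaos level of $\mathbb{Y}_{k}$ and exploiting the bilinearity of the Malliavin expressions. As in that proof, the bound (\ref{est}) is a direct consequence of the Stein--Malliavin estimate of Theorem \ref{tt1} (equivalently (\ref{31m-1})) together with the triangle inequality: writing $\theta_{k}=P_{(X_{k},\mathbb{Y}_{k})}$, $\eta_{k}=P_{Z}\otimes P_{\mathbb{Y}_{k}}$ and $\eta=P_{Z}\otimes P_{\mathbb{U}}$ as in (\ref{not}), one has
\[
d_{W}(\theta_{k},\eta)\leq d_{W}(\theta_{k},\eta_{k})+d_{W}(P_{\mathbb{Y}_{k}},P_{\mathbb{U}})\leq d_{W}(\theta_{k},\eta_{k})+\mathbf{E}\Vert\mathbb{Y}_{k}-\mathbb{U}\Vert_{\mathbb{R}^{d}},
\]
and the last term tends to $0$ by (\ref{cc2}). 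Thus it is enough to prove $d_{W}(\theta_{k},\eta_{k})\to 0$, for which, by (\ref{31m-1}) and Cauchy--Schwarz, it suffices to establish, for each $j=1,\dots,d$,
\[
\mathbf{E}\left(\langle D(-L)^{-1}X_{k},DX_{k}\rangle-\sigma^{2}\right)^{2}\to_{k\to\infty}0\qquad\text{and}\qquad\mathbf{E}\langle D(-L)^{-1}X_{k},DY_{j,k}\rangle^{2}\to_{k\to\infty}0 .
\]

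The first limit follows from (\ref{cc1}) and the Fourth Moment Theorem (Theorem \ref{4mom}), exactly as in Step 2 of the proof of Proposition \ref{pp1}. For the second, I would use the linearity of $D$ and of $D(-L)^{-1}$: since the degree-zero term $I_{0}(g_{0,k}^{(j)})$ is constant and hence has vanishing Malliavin derivative,
\[
\langle D(-L)^{-1}X_{k},DY_{j,k}\rangle=\sum_{n=1}^{N_{0}}\langle D(-L)^{-1}X_{k},DI_{n}(g_{n,k}^{(j)})\rangle ,
\]
so by Minkowski's inequality in $L^{2}(\Omega)$ it is enough to show $\mathbf{E}\langle D(-L)^{-1}X_{k},DI_{n}(g_{n,k}^{(j)})\rangle^{2}\to 0$ for every fixed $n\in\{1,\dots,N_{0}\}$. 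By Lemma \ref{ll2} this quantity equals $(\mathbf{E}(X_{k}I_{n}(g_{n,k}^{(j)})))^{2}1_{p=n}+\sum_{r=1}^{p\wedge n}c(r,p,n)\Vert f_{k}\widetilde{\otimes}_{r}g_{n,k}^{(j)}\Vert^{2}_{H^{\otimes p+n-2r}}$. Orthogonality of the Wiener chaoses gives $\mathbf{E}(X_{k}I_{n}(g_{n,k}^{(j)}))=0$ when $n\neq p$, while for $n=p$ one has $\mathbf{E}(X_{k}I_{p}(g_{p,k}^{(j)}))=\mathbf{E}(X_{k}Y_{j,k})\to 0$ by (\ref{cc3}); moreover when $n=p$ the index $r=p\wedge n$ has constant $c(p,p,p)=0$ and so does not occur. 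For the remaining contraction terms, recall (the observation preceding the statement) that $\mathbb{Y}_{k}\to\mathbb{U}$ in $L^{2}(\Omega)$ forces each sequence $(g_{n,k}^{(j)},k\geq1)$ to converge in $H^{\otimes n}$; applying Lemma \ref{llkey1}(1) with $q=n$ then yields $\Vert f_{k}\otimes_{r}g_{n,k}^{(j)}\Vert\to 0$ for $r=1,\dots,p\wedge n$ if $p\neq n$ and for $r=1,\dots,(p\wedge n)-1$ if $p=n$, and since $\Vert f_{k}\widetilde{\otimes}_{r}g_{n,k}^{(j)}\Vert\leq\Vert f_{k}\otimes_{r}g_{n,k}^{(j)}\Vert$ the claim follows.

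Putting the two limits into (\ref{31m-1}) gives $d_{W}(\theta_{k},\eta_{k})\to 0$, hence $d_{W}(\theta_{k},\eta)\to 0$ by the first display, which is precisely the joint convergence (\ref{5a-5}) to $(Z',\mathbb{U})$ with $Z'\sim N(0,\sigma^{2})$ independent of $\mathbb{U}$; the estimate (\ref{est}) was already obtained in the first step.

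I do not expect a serious obstacle: this is Proposition \ref{pp1} applied separately at each chaos level of $\mathbb{Y}_{k}$, which is legitimate precisely because the expansion is finite, so splitting by linearity and invoking Minkowski's inequality costs nothing. The points that require a little care are the bookkeeping of the cross terms $\mathbf{E}(X_{k}I_{n}(g_{n,k}^{(j)}))$ (zero by chaos orthogonality for $n\neq p$, and equal to $\mathbf{E}(X_{k}Y_{j,k})$ for $n=p$) and the remark that the degenerate contraction index is already suppressed by the vanishing combinatorial constant of Lemma \ref{ll2}, so that Lemma \ref{llkey1}(1) covers exactly the contraction norms that actually appear --- including the case $n>p$, whose genuine difficulty is absorbed into the key Lemma \ref{llkey}.
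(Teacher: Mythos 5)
Your proposal is correct and follows essentially the same route as the paper: decompose $\langle D(-L)^{-1}X_{k},DY_{j,k}\rangle$ chaos level by chaos level (legitimate since the expansion is finite), control the cross term via chaos orthogonality and (\ref{cc3}), control the contraction norms via Lemma \ref{llkey1} using that $L^{2}(\Omega)$-convergence of $\mathbb{Y}_{k}$ forces convergence of each kernel $g_{n,k}^{(j)}$ in $H^{\otimes n}$, and then pass from $d_{W}(\theta_{k},\eta_{k})\to 0$ to the conclusion via the triangle inequality (\ref{n4}) and (\ref{cc2}). The only cosmetic difference is that you work with the $L^{2}$ bound (\ref{31m-1}) plus Minkowski throughout, whereas the paper starts from the $L^{1}$ bound and then passes to second moments term by term; the content is identical.
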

	\begin{proof} Recall the notation (\ref{not}).  Again,  the Stein-Malliavin bound (\ref{est}) follows directly from (\ref{31m-1}). By using this estimate (\ref{31m-1}), 
		\begin{equation*}
			d_{W} (\theta_{k}, \eta_{k}) \leq C\left( \mathbf{E}\left| \sigma ^{2}- \langle D(-L)^{-1}X_{k}, DX_{k} \rangle _{H}\right| + \sum_{j=1} ^{d} \mathbf{E} \left| \langle D(-L) ^{-1}X_{k}, DY_{j,k} \rangle _{H} \right| \right).
		\end{equation*}
		We also have, for every $j=1,...,d$ and $k\geq 1$, 
		\begin{eqnarray*}
			&&\mathbf{E} \left| \langle D(-L) ^{-1}, X_{k}, DY_{j,k} \rangle _{H}\right| = \mathbf{E} \left| \langle D(-L) ^{-1} X_{k}, \sum_{n=0} ^{N_{0}}DI_{n}(g_{n,k}^{(j)})\rangle _{H}\right|   \\
			&\leq & \sum_{n=0} ^{N_{0}} \mathbf{E}\left| \langle D(-L) ^{-1} X_{k}, DI_{n}(g_{n,k}^{(j)})\rangle _{H}\right| \leq  \sum_{n=0} ^{N_{0}} \left( \mathbf{E}\left| \langle D(-L) ^{-1} X_{k}, DI_{n}(g_{n,k}^{(j)})\rangle _{H}\right| ^{2} \right) ^{\frac{1}{2}}
		\end{eqnarray*}
		We notice that (\ref{cc3}) and the isometry of multiple stochastic integrals (\ref{iso}) implies that
		\begin{equation}
			\label{6a-2}
			\mathbf{E} X_{k} I_{n}(g_{n,k}^{(j)})\to_{k\to \infty }0,
		\end{equation}
		for every $j=1,...,d$ and for every $n=0,...,N_{0}$.  We use  Lemma \ref{ll2} to express the quantity $\mathbf{E}\left| \langle D(-L) ^{-1} X_{k}, DI_{n}(g_{n,k}^{(j)})\rangle _{H}\right| ^{2} $, and then by using (\ref{6a-2}) and Lemma \ref{llkey1}, we deduce that 
		\begin{equation*}
			\mathbf{E}\left| \langle D(-L) ^{-1} X_{k}, DI_{n}(g_{n,k}^{(j)})\rangle _{H}\right| ^{2} \to_{k\to \infty}0,
		\end{equation*}
		for every $j=1,...,d$ and $n-0,1, ...N_{0}$. Thus 
		\begin{equation*}
			\mathbf{E} \left| \langle D(-L) ^{-1}, X_{k}, DY_{j,k} \rangle _{H}\right|  \to_{k\to \infty}0,
		\end{equation*}
		for every $j=1,...,d$ and this implies
		\begin{equation*}
			d_{W}(\theta_{k}, \eta_{k}) \to _{k\to \infty}0.
		\end{equation*}
		To deduce (\ref{5a-5}),  it suffices to apply (\ref{n4}) in  the proof of Proposition \ref{pp1} and to use the hypothesis (\ref{cc2}). \end{proof}\qed

	\subsection{Proof of the main result (Theorem \ref{tt2})}
	Let $\eps >0$.  For $M\geq 1$, let us define,
	\begin{equation*}
		Y_{j,k}^{M}= \sum_{n=0} ^{M}  I_{n} (g_{n,k}^{(j)}), \hskip0.4cm j=1,...,d,
	\end{equation*}
	and consider the random vector in $\mathbb{R} ^{d}$
	\begin{equation}\label{ym}
		\mathbb{Y}^{M}_{k}=(Y_{1,k} ^{M},...,Y _{d,k} ^{M}), \hskip0.4cm k\geq 1.
	\end{equation}
	Clearly, for every $k\geq 1$, 
	\begin{equation*}
		\mathbf{E} \Vert \mathbb{Y}_{k}^{M}- \mathbb{Y}_{k} \Vert ^{2} _{ \mathbb{R} ^{d}}\to_{M\to \infty}0.
	\end{equation*}
	Recall that by $\Vert \cdot\Vert _ {\mathbb{R}^{d}}$ and $\langle \cdot, \cdot\rangle  _{\mathbb{R}^{d}}$ we denote  the Euclidean norm and the Euclidean scalar product in $\mathbb{R} ^{d}$.  By (\ref{cc3}) and the orthogonality of multiple stochastic integrals of different orders (\ref{iso}), for every $j=1,...,d$ and for every $M\geq 1$,
	\begin{equation}
		\label{6a-3}
		\mathbf{E} X_{k} Y_{j,k}^{M} \to _{k\to \infty} 0.
	\end{equation}
	Now, for any $\lambda_{1} \in \mathbb{R}$ and $\boldsymbol{\lambda}\in \mathbb{R}^{d}$, 
	\begin{eqnarray}
		&&	\left| \mathbf{E}e ^{i\lambda _{1} X_{k}+i\langle \boldsymbol{\lambda},\mathbb{Y}_{k}\rangle _{\mathbb{R}^{d}}}- \mathbf{E}e ^{i\lambda _{1} Z'} \mathbf{E} e ^{i\langle \boldsymbol{\lambda},\mathbb{U}\rangle _{\mathbb{R}^{d}}}\right| \nonumber\\
		&\leq &\left|  \mathbf{E}e ^{i\lambda _{1} X_{k} +i\langle \boldsymbol{\lambda},\mathbb{Y}_{k} \rangle _{\mathbb{R}^{d}}}-\mathbf{E}e ^{i\lambda _{1} X_{k}+i\langle \boldsymbol{\lambda},\mathbb{Y}^{M}_{k}\rangle _{\mathbb{R}^{d}}}\right| + \left| \mathbf{E}e ^{i\lambda _{1} X_{k}+i\langle \boldsymbol{\lambda},\mathbb{Y}^{M}_{k}\rangle _{\mathbb{R}^{d}}}- \mathbf{E}e ^{i\lambda _{1} Z'} \mathbf{E}e^{i\langle \boldsymbol{\lambda},\mathbb{Y}_{k} ^{M}\rangle _{\mathbb{R}^{d}}}\right|\nonumber \\
		&&+\left| \mathbf{E}e ^{i\lambda _{1} Z'} \mathbf{E}e^{i\langle \boldsymbol{\lambda},\mathbb{Y}_{k} ^{M}\rangle _{\mathbb{R}^{d}}}- \mathbf{E}e ^{i\lambda _{1} Z'} \mathbf{E}e^{i\langle \boldsymbol{\lambda},\mathbb{U} \rangle _{\mathbb{R}^{d}}}\right| \nonumber\\
		&=&a_{M,k}+ b_{M,k}+c_{M,k}. \label{8a-3}
	\end{eqnarray}
	Let us estimate separately  the three summands from above.  
	
	\vskip0.2cm
	
	\noindent {\it Estimation of $a_{M,k}$. } By the mean value theorem, 
	\begin{eqnarray}
		a_{M,k}&\leq & \mathbf{E} \left|  e ^{  i\langle \boldsymbol{\lambda},\mathbb{Y}_{k} \rangle _{\mathbb{R}^{d}}}-e ^{  i\langle \boldsymbol{\lambda},\mathbb{Y}_{k}^{M}  \rangle _{\mathbb{R}^{d}}}\right| 
		\leq  \mathbf{E} \Vert \mathbb{Y}^{M}_{k}- \mathbb{Y}_{k} \Vert  _{ \mathbb{R} ^{d}}\nonumber\\
		&\leq& \sqrt{ \sum_{j=1} ^{d} \mathbf{E} \left( Y ^{M}_{j,k}- Y_{j,k} \right) ^{2} }= \sqrt{ \sum_{j=1}^{d} \sum_{ n=M+1}^{\infty}n! \Vert g_{n,k} ^{(j)} \Vert ^{2}_{H ^{\otimes n}}}\nonumber\\
		&\leq &  \sqrt{ \sum_{j=1}^{d} \sup_{k\geq 1}\sum_{ n=M+1}^{\infty}n! \Vert g_{n,k} ^{(j)} \Vert ^{2}_{H ^{\otimes n}}}\label{8a-1}
	\end{eqnarray}
	and the last quantity goes to zero as $M\to \infty$ due to (\ref{cc4}). So, for $M\geq M_{1}$ large, $a_{M,k} \leq \eps$. 
	
	\vskip0.2cm
	
	\noindent {\it Estimation of $b_{M,k}$. }  Basically, the convergence of this term follows from Proposition \ref{pp2}, since the components of $ \mathbb{Y}_{k} ^{M}$ belong to a finite sum of Wiener chaoses.  For $M\geq M_{1}$, we have
	\begin{eqnarray*}
		&&	d_{W} \left( P_{ (X_{k}, \mathbb{Y}_{k}^{M})}, P_{Z'} \otimes P_{ \mathbb{Y} _{k}^{M}}\right) \\
		&\leq & C \left[ \mathbf{E} \left| \sigma ^{2} - \langle D(-L) ^{-1}X_{k}, DX_{k}\rangle _{H}\right| + \sum_{j=1}^{d} \mathbf{E} \left| \sigma ^{2} - \langle D(-L) ^{-1}X_{k}, DY^{M}_{j,k}\rangle _{H}\right| \right]
	\end{eqnarray*}
	Using (\ref{6a-3}), as in Proposition \ref{pp2}, the both summands in the right-hand above converge to zero as $k\to \infty$. So, for $k$ large, $b_{M,k}\leq \eps. $
	\vskip0.2cm
	
	\noindent {\it Estimation of $c_{M,k}$. } First notice that
	
	\begin{equation*}
		c_{M,k}\leq 	\left|\mathbf{E} e ^{i\langle \boldsymbol{\lambda}, \mathbb{Y}_{k} ^{M} \rangle _{\mathbb{R} ^{d}}}-\mathbf{E}e ^{i\langle \boldsymbol{\lambda}, \mathbb{U} \rangle _{\mathbb{R} ^{d}}}\right| 
	\end{equation*}
	Let $\eps >0$. We show that for $M,k$ large enough, 
	\begin{equation}\label{5a-7}
		\left|\mathbf{E} e ^{i\langle \boldsymbol{\lambda}, \mathbb{Y}_{k} ^{M} \rangle _{\mathbb{R} ^{d}}}-\mathbf{E}e ^{i\langle \boldsymbol{\lambda}, \mathbb{U} \rangle _{\mathbb{R} ^{d}}}\right| \leq \eps. 
	\end{equation}
	We have
	\begin{eqnarray}
		\left| \mathbf{E}e ^{i\langle \boldsymbol{\lambda}, \mathbb{Y}_{k} ^{M} \rangle _{\mathbb{R} ^{d}}}-\mathbf{E}e ^{i\langle \boldsymbol{\lambda}, \mathbb{U} \rangle _{\mathbb{R} ^{d}}}\right| &\leq & \left| \mathbf{E}e ^{i\langle \boldsymbol{\lambda}, \mathbb{Y}_{k} ^{M} \rangle _{\mathbb{R} ^{d}}}-\mathbf{E}e ^{i\langle \boldsymbol{\lambda}, \mathbb{Y}_{k} \rangle _{\mathbb{R} ^{d}}}\right| +\left| \mathbf{E}e ^{i\langle \boldsymbol{\lambda}, \mathbb{Y}_{k} \rangle _{\mathbb{R} ^{d}}}-\mathbf{E} e ^{i\langle \boldsymbol{\lambda}, \mathbb{U} \rangle _{\mathbb{R} ^{d}}}\right|\nonumber\\
		&\leq & C\mathbf{E} \Vert \mathbb{Y} _{k} ^{M}- \mathbb{Y}_{k} \Vert _{\mathbb{R}^{d}}+ \left| \mathbf{E} e ^{i\langle \boldsymbol{\lambda}, \mathbb{Y}_{k} \rangle _{\mathbb{R} ^{d}}}-\mathbf{E}e ^{i\langle \boldsymbol{\lambda}, \mathbb{U} \rangle _{\mathbb{R} ^{d}}}\right|.\label{8a-2}
	\end{eqnarray}
	We use the estimate (\ref{8a-1})
	\begin{eqnarray*}
		&&\mathbf{E} \Vert \mathbb{Y} _{k} ^{M}- \mathbb{Y}_{k} \Vert _{\mathbb{R}^{d}}
		\leq   \sqrt{ \sum_{j=1}^{d} \sup_{k\geq 1}\sum_{ n=M+1}^{\infty}n! \Vert g_{n,k} ^{(j)} \Vert ^{2}_{H ^{\otimes n}}}
	\end{eqnarray*}
	and the last quantity goes to zero as $M\to \infty$ due to (\ref{cc4}). By using this inequality and (\ref{cc2}) in (\ref{8a-2}), we get (\ref{5a-7}).  Therefore, for $k,M$ large, $c_{M,k}\leq \eps$. 
	
	Consequently, the left-hand side of (\ref{8a-3}) goes to zero as $k\to \infty$. \qed 
	
	It is possible to assume only the convergence in law of the sequence $(\mathbb{Y}_{k}, k\geq 1)$ instead of (\ref{cc2}) if the components of $ \mathbb{Y} _{k}$ belongs to the sum of the first $q$ Wiener chaos with $q\leq p$.

	\begin{prop}\label{pp4}
			Let us consider the integer numbers $p\geq2$, $d\geq 1$. Let $(X_{k}, k\geq 1)$ be a sequence of random variables such that for every $k\geq 1$, $X_{k}= I_{p} (f_{k})$ with $f_{k} \in H ^{\odot p}$ that satisfies (\ref{cc1}). 
	
		Let $(\mathbb{Y} _{k}, k\geq 1)= \left( (Y_{1,k},..., Y_{d,k}), k\geq 1\right) $ be a sequence of random vectors such that,  for every $j=1,...,d,$ the random variable $Y_{j,k}$ belongs to $\mathbb{D} ^{1,2}$, and it admits the chaos expansion
		\begin{equation*}
			Y_{j,k}= \sum_{n=0} ^{q} I_{n} (g_{n,k} ^{(j)}) \mbox{ with } g_{n,k} ^{(j)} \in H ^{\odot n}
		\end{equation*}
	with $q\leq p$. Suppose that there exists a random vector $\mathbb{U}$ in $\mathbb{R} ^{d}$ such that 
		\begin{equation}
			\label{cc22}
			\mathbb{Y}_{k} \to ^{(d)}_{k\to \infty} \mathbb{U}.
		\end{equation}
		Then, if (\ref{cc3}) holds true, 	we have 
		\begin{equation*}
			(X_{k}, \mathbb{Y}_{k}) \to ^{(d) }_{k\to \infty} (Z', \mathbb{U}),
		\end{equation*}
		where $ Z'\sim N(0, \sigma ^{2})$ and $Z'$ is independent by the random vector $\mathbb{U}$. Moreover, (\ref{est}) holds true.
	\end{prop}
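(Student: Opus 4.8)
The plan is to follow the proof of Proposition \ref{pp2} almost verbatim; the single genuine difference is that we are now given only the convergence in law (\ref{cc22}) of $(\mathbb{Y}_{k},k\ge1)$, so we must replace part 1 of Lemma \ref{llkey1}, which requires a convergent sequence of kernels, by part 2, which only asks for a bounded sequence and for this needs precisely the standing hypothesis $q\le p$. First I would note that, since each $Y_{j,k}$ lives in the fixed finite sum $\bigoplus_{n=0}^{q}$ of Wiener chaoses, the convergence in distribution of its marginals entails $\sup_{k\ge1}\mathbf{E}Y_{j,k}^{2}<\infty$ (a sequence converging in distribution is bounded in $L^{r}(\Omega)$ for every $r>1$; see \cite{Ja}, \cite{NP-book}); via the isometry $\mathbf{E}Y_{j,k}^{2}=\sum_{n=0}^{q}n!\,\Vert g_{n,k}^{(j)}\Vert_{H^{\otimes n}}^{2}$ this gives $\sup_{k\ge1}\Vert g_{n,k}^{(j)}\Vert_{H^{\otimes n}}<\infty$ for every $j$ and every $n=0,\dots,q$.

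Next, with the notation (\ref{not}) I would estimate $d_{W}(\theta_{k},\eta_{k})$ through the Stein--Malliavin bound (\ref{31m-1}). The term carrying $\sigma^{2}-\langle D(-L)^{-1}X_{k},DX_{k}\rangle$ tends to $0$ by (\ref{cc1}) and the Fourth Moment Theorem (Theorem \ref{4mom}), exactly as in Proposition \ref{pp1}. For the cross terms I would decompose $Y_{j,k}=\sum_{n=0}^{q}I_{n}(g_{n,k}^{(j)})$, bound $\mathbf{E}|\langle D(-L)^{-1}X_{k},DY_{j,k}\rangle|$ by $\sum_{n=0}^{q}(\mathbf{E}|\langle D(-L)^{-1}X_{k},DI_{n}(g_{n,k}^{(j)})\rangle|^{2})^{1/2}$, and apply Lemma \ref{ll2}: each summand is a finite combination of $(\mathbf{E}X_{k}I_{n}(g_{n,k}^{(j)}))^{2}\,1_{n=p}$ and of the contraction norms $\Vert f_{k}\widetilde{\otimes}_{r}g_{n,k}^{(j)}\Vert^{2}$, $1\le r\le p\wedge n$. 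The first quantity vanishes in the limit by the orthogonality of multiple integrals of distinct orders together with the uncorrelation assumption (\ref{cc3}) (it equals $(\mathbf{E}X_{k}Y_{j,k})^{2}$ when $n=p\le q$, and there is no such term when $p>q$); the contraction norms vanish by part 2 of Lemma \ref{llkey1}, applicable because $n\le q\le p$ and $(g_{n,k}^{(j)})_{k}$ is bounded in $H^{\otimes n}$ by the previous step. Hence $d_{W}(\theta_{k},\eta_{k})\to0$.

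Finally, I would upgrade this to the joint convergence (\ref{n2}) by a soft argument. Since $d_{W}$ dominates the bounded-Lipschitz metric $d_{BL}$, Step 2 gives $d_{BL}(\theta_{k},\eta_{k})\to0$; on the other hand $\eta_{k}=P_{Z'}\otimes P_{\mathbb{Y}_{k}}$ converges weakly to $\eta=P_{Z'}\otimes P_{\mathbb{U}}$ by (\ref{cc22}) (for bounded continuous $h$ one has $\mathbf{E}h(z,\mathbb{Y}_{k})\to\mathbf{E}h(z,\mathbb{U})$ for each $z$, then integrate over $z\sim Z'$ by dominated convergence), so $d_{BL}(\eta_{k},\eta)\to0$; combining, $d_{BL}(\theta_{k},\eta)\to0$, which is (\ref{n2}). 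The bound (\ref{est}) is then immediate from (\ref{31m-1}) and the triangle inequality, exactly as in (\ref{n4}). The main obstacle is the passage in Step 1 from convergence in law to $L^{2}$-boundedness of the kernels: this is where the hypothesis that the components of $\mathbb{Y}_{k}$ lie in a \emph{finite} sum of chaoses is essential, and it is what lets part 2 of Lemma \ref{llkey1} replace part 1, but only under the constraint $q\le p$ (so that the bound (\ref{10a-1}) involves only the self-contractions $f_{k}\otimes_{r}f_{k}$ with $1\le r\le p-1$ controlled by the Fourth Moment Theorem).
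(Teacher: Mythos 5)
Your proposal is correct and follows essentially the same route as the paper: it mimics the proof of Proposition \ref{pp2}, replacing point 1 of Lemma \ref{llkey1} by point 2 (which is where $q\le p$ enters), showing $d_{W}(\theta_{k},\eta_{k})\to 0$ via (\ref{30m-8}), and then combining this with the weak convergence $\eta_{k}\to\eta$ coming from (\ref{cc22}). Your explicit justification of the boundedness of the kernels $g_{n,k}^{(j)}$ in $H^{\otimes n}$ (via $L^{2}$-boundedness of sequences converging in law inside a fixed finite sum of chaoses) is a detail the paper leaves implicit, but it does not change the argument.
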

\begin{proof}
	The proof can be done by following the lines of the proof of Proposition \ref{pp2}, by using point 2. in Lemma \ref{llkey1}. We use the notation (\ref{not}). Via the bound (\ref{30m-8}) and point 2. in Lemma \ref{llkey1}, we  obtain that
	\begin{equation}\label{n3}
		d_{W}(\theta_{k}, \eta_{k}) \to _{k\to \infty} 0.
	\end{equation}
 Let $ f: \mathbb{R} ^{d+1} \to \mathbb{R}$ be a continuous and bounded function. By using the triangle's inequality, we have 
\begin{eqnarray*}
	&&\left| \int_{\mathbb{R} ^{d+1}} f(x)d\theta_{k} (x)-  \int_{\mathbb{R} ^{d+1}} f(x)d\eta (x)\right| \\
	&&\leq \left| \int_{\mathbb{R} ^{d+1}} f(x)d\theta_{k} (x)-  \int_{\mathbb{R} ^{d+1}} f(x)d\eta_{k}  (x)\right| + \left| \int_{\mathbb{R} ^{d+1}} f(x)d\eta_{k} (x)-  \int_{\mathbb{R} ^{d+1}} f(x)d\eta (x)\right|.
\end{eqnarray*}
The first summand in the right-hand side converges to zero as $k\to \infty$ by (\ref{n3}). The second summand in the right-hand side also goes to zero as $k$ tends to infinity due to the assumption (\ref{cc22}). Then, the conclusion  is obtained. 
	\end{proof}

\subsection{A counter-example}\label{counter}
Assume $(X_{k}= I_{p}(f_{k}), k\geq 1)$ with $ f_{k} \in H ^{\odot p}$ be such that $ X_{k}\to _{k\to \infty} Z\sim N(0, \sigma ^{2})$.  Let $(Y_{k}, k\geq 1)$ be a sequence in the $q$th Wiener chaos, $Y_{k}= I_{q}(g_{k}), g_{k} \in H ^{\odot q}$. Assume that $q>p$
\begin{equation*}
	Y_{k} \to _{k\to \infty} U.
\end{equation*}
	Can we deduce the joint convergence of $ (X_{k}, Y_{k}) $ to $(Z', U)$ where $ Z'\sim N(0, \sigma ^{2}) $ and $Z', U$ are independent? By Theorem \ref{tt2} and Proposition \ref{pp4}, the conclusion is true if the convergence of $ (Y_{k}, k\geq 1)$ holds in $ L ^{2}(\Omega)$ or if $p\geq q$ (and (\ref{cc3}) holds). For $q>p$, the answer is negative as illustrated by the following example. Let
	\begin{equation*}
		g_{k}= f_{k} \widetilde{\otimes} f_{k}, \hskip0.5cm k\geq 1,
	\end{equation*}
	and $Y_{k}= I_{2p} (g_{k}), k\geq 1$. Then, by the product formula (\ref{prod}),
	\begin{equation*}
		X_{k} ^{2} -\mathbf{E} X_{k}^{2}= Y_{k}+ R_{k}, 
	\end{equation*}
	where $ R_{k} \to _{k\to \infty} 0$ in $ L ^{2} (\Omega)$ (this comes from point 3. in Theorem \ref{4mom}). Consequently, 
	\begin{equation*}
		(X_{k}, Y_{k}) \to ^{(d)} _{k\to \infty} ( Z, Z ^{2}- \sigma ^{2}),
	\end{equation*}
and obviously the components of the limit vector are not independent. 
	
	\section{Applications}
	We illustrate our results by four examples. In the first example, we deduce from Proposition \ref{pp1} the joint convergence  of the Hermite variations of $d+1$ correlated fractional Brownian motions. The second example constitutes an application of Theorem \ref{tt2}, by considering a random variable with infinite chaos expansion. In the third  example, we treat a  two -dimensional sequence in Wiener chaos, one component being asymptotically Gaussian and the second component satisfying a non-central limit theorem. Such estimates are new in the literature and they cannot be obtained via the standard Stein method.  Finally, in the last example, to evaluate the dependence structure between  the solution a stochastic differential equation and the random noise.

	\subsection{Hermite variations of correlated fractional Brownian motions}
	Let $ (W_{t}, t\geq 0) $ be a Wiener process  and  for $ H\in (0,1), t\geq 0$ consider the kernel
	\begin{equation*}
		f_{t, H} (s)= d(H) \left( (t-s)_{+} ^{H-\frac{1}{2}} -(-s) _{+}^{H-\frac{1}{2}}\right), \hskip0.3cm s\in \mathbb{R}.
	\end{equation*}
	where $d(H)$ is a normalizing constsnt that ensures that $\int_{\mathbb{R} }f_{t,H}(s) ^{2}ds = t ^{2H}.$ Let $ H_{0},H_{1},..., H_{d} \in (0,1)$ and define, for $i=0,1, ...,d$, 
	\begin{equation}\label{22i-1}
		B ^{H_{i}} _{t} = \int_{\mathbb{R}} f_{t, H_{i}} (s) dW_{s}, \hskip0.4cm t\geq 0. 
	\end{equation}
	Then , for $i=0,1,...,d$, $ \left( B^{H_{i}}, t\geq 0\right) $ are $d+1$ (correlated) fractional Brownian motions with Hurst parameters $H_{i}$. We write, for any integer number  $k\geq 0$,
	\begin{equation*}
		B ^{H_{i}}_{k+1}- B ^{H_{i}} _{k} = I_{1} (L_{k, H_{i}}), \hskip0.3cm i=0,1,...,d,
	\end{equation*}
	where $ I_{q}$ stands for the multiple stochastic integral of order $q\geq 1$ with respect to the Wiener process $W$ and for $k\geq 0$,
	\begin{equation}
		L_{k, H_{i}}= f_{k+1, H_{i}}- f_{k, H_{i}}.
	\end{equation}
	
	For $N\geq 1$ integer, we set 
	\begin{equation}
		\label{xn}
		X_{N}= \frac{1}{\sqrt{N}}\sum_{k=0} ^{N-1} I_{p} \left( L_{k, H_{0}}^{\otimes p}\right) = I_{p} (f_{N})
	\end{equation}
	and for $j=1,...,d$,
	\begin{equation}\label{yn}
		Y_{N,j} = N ^{q_{j}(1-H_{j})-1}\sum_{k=0} ^{N-1} I_{q} \left( L_{k, H_{j}}^{\otimes q_{j}}\right)= I_{q_{j}}(g_{N,j}). 
	\end{equation}
	We used the notation
	\begin{equation}
		\label{fg}
		f_{N}= \frac{1}{\sqrt{N}}\sum_{k=0} ^{N-1} L_{k, H_{0}}^{\otimes p} \mbox{ and } g_{N,j}= N ^{q_{j}(1-H_{j})-1}\sum_{k=0} ^{N-1} L_{k, H_{j}}^{\otimes q_{j}}
	\end{equation}
	From the  classical Breuer -Major theorem  (see \cite{BM}) we know  the limit  behavior in distribution of the sequence (\ref{xn}) while the Non-Central limit theorem (see e.g. \cite{Ta2})  gives the limit behavior of (\ref{yn}). More precisely, we have the following. 
	
	\begin{theorem}\label{tt3}
		Consider the sequences $(X_{N}, N\geq 1)$ and $ (Y_{N,j}, N\geq 1)$ given by (\ref{xn}), (\ref{yn}), respectively. Then
		\begin{enumerate}
			\item If $H_{0} \in \left(0, 1-\frac{1}{p}\right)$,
			\begin{equation*}
				X_{N} \to ^{(d)} _{N\to \infty} N (0, \sigma _{p, H_{0}}^{2}).
			\end{equation*}
			\item If $ H_{j} \in \left( 1-\frac{1}{2q_{j}}, 1\right)$ for $j=1,...,d$,
			\begin{equation*}
				Y_{N,j} \to ^{(d)} _{N\to \infty} c_{ q_{j}, H_{j}}R^{\gamma_{j}} _{1} ,
			\end{equation*}
			where $ R^{\gamma_{j}}_{1}$ is a Hermite random variable with  Hurst parameter $\gamma_{j}=1+q(H-1)$.  The explicit expression of the constants $\sigma_{p, H_{0}}, c_{q_{j}, H_{j}}>0$ can be found in e.g. \cite{BM}, \cite{Ta2}. 
		\end{enumerate}
	\end{theorem}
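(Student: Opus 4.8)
The plan is to observe that, once the deterministic normalisations are peeled off, $X_{N}$ is a Breuer--Major sum and each $Y_{N,j}$ is a non-central-limit-theorem sum built from a fractional Brownian motion, and then to quote the two classical theorems. First I would record that the constant $d(H_{i})$, chosen so that $\Vert f_{t,H_{i}}\Vert^{2}=t^{2H_{i}}$, together with the stationarity of the increments of $B^{H_{i}}$, forces $\Vert L_{k,H_{i}}\Vert^{2}=\mathbf{E}(B^{H_{i}}_{k+1}-B^{H_{i}}_{k})^{2}=1$, so that $\xi^{(i)}_{k}:=I_{1}(L_{k,H_{i}})=B^{H_{i}}_{k+1}-B^{H_{i}}_{k}$ is, for each fixed $i$, a centered stationary Gaussian sequence of unit variance. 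Using the identity $I_{m}(h^{\otimes m})=H_{m}(I_{1}(h))$, valid whenever $\Vert h\Vert=1$, with $H_{m}$ the $m$th Hermite polynomial, one gets $I_{m}(L_{k,H_{i}}^{\otimes m})=H_{m}(\xi^{(i)}_{k})$, hence
\begin{equation*}
X_{N}=\frac{1}{\sqrt{N}}\sum_{k=0}^{N-1}H_{p}(\xi^{(0)}_{k}),\qquad Y_{N,j}=N^{q_{j}(1-H_{j})-1}\sum_{k=0}^{N-1}H_{q_{j}}(\xi^{(j)}_{k}),
\end{equation*}
which are normalised Hermite variations of orders $p$ and $q_{j}$, the kernels $H_{p}$, $H_{q_{j}}$ having Hermite rank exactly $p$, $q_{j}$.

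For point 1, I would use that the covariance $\rho_{0}(m)=\mathbf{E}[\xi^{(0)}_{0}\xi^{(0)}_{m}]=\frac{1}{2}\big(\vert m+1\vert^{2H_{0}}-2\vert m\vert^{2H_{0}}+\vert m-1\vert^{2H_{0}}\big)$ satisfies $\rho_{0}(m)\sim H_{0}(2H_{0}-1)\vert m\vert^{2H_{0}-2}$ as $\vert m\vert\to\infty$, so that under the stated restriction on $H_{0}$ the series $\sum_{m\in\mathbb{Z}}\vert\rho_{0}(m)\vert^{p}$ converges. The Breuer--Major theorem, in the form recalled in \cite{BM}, then gives $X_{N}\to^{(d)}N(0,\sigma_{p,H_{0}}^{2})$ with $\sigma_{p,H_{0}}^{2}=p!\sum_{m\in\mathbb{Z}}\rho_{0}(m)^{p}$; alternatively one can check that $\Vert f_{N}\otimes_{r}f_{N}\Vert\to 0$ for $r=1,\dots,p-1$ (which again reduces to the summability of $\vert\rho_{0}\vert^{p}$) and invoke the Fourth Moment Theorem (Theorem \ref{4mom}).

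For point 2, in the long-memory regime $H_{j}\in\big(1-\frac{1}{2q_{j}},1\big)$ one has $\sum_{m}\vert\rho_{j}(m)\vert^{q_{j}}=\infty$, and a direct computation shows that $\mathbf{E}\big(\sum_{k=0}^{N-1}H_{q_{j}}(\xi^{(j)}_{k})\big)^{2}$ grows like $N^{2q_{j}(H_{j}-1)+2}$, precisely the order removed by the factor $N^{q_{j}(1-H_{j})-1}$ in (\ref{yn}). The non-central limit theorem of Taqqu and Dobrushin--Major (see \cite{Ta2}) then yields $Y_{N,j}\to^{(d)}c_{q_{j},H_{j}}R^{\gamma_{j}}_{1}$, where $R^{\gamma_{j}}$ is the Hermite process of order $q_{j}$ with Hurst parameter $\gamma_{j}=1+q_{j}(H_{j}-1)$ and $c_{q_{j},H_{j}}>0$ is the explicit constant of the cited references.

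I do not expect a genuine obstacle, since both theorems are among the most classical in Gaussian analysis; the only care needed is bookkeeping --- verifying that $d(H_{i})$ indeed makes $\Vert L_{k,H_{i}}\Vert=1$, matching the scaling exponents in (\ref{xn})--(\ref{yn}) so that the two limits have finite nonzero variance, and reading off $\sigma_{p,H_{0}}$ and $c_{q_{j},H_{j}}$ from the covariance series. If anything, the mildly delicate feature is that the two points require opposite behaviour of the covariance ($\vert\rho_{0}\vert^{p}$ summable in the central regime, $\vert\rho_{j}\vert^{q_{j}}$ divergent in the non-central one), which is exactly why the fractional Brownian motions in (\ref{22i-1}) are allowed to carry distinct Hurst indices $H_{0},H_{1},\dots,H_{d}$.
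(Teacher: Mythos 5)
Your proposal is correct and follows essentially the same route as the paper, which states Theorem \ref{tt3} as a recalled classical result and simply cites Breuer--Major \cite{BM} and Taqqu \cite{Ta2} after identifying $X_{N}$ and $Y_{N,j}$ as normalized Hermite variations of the fractional increments. The only caveat is notational: with the paper's convention for $H_{m}$ (carrying the $1/m!$ factor, as in the Appendix) one has $I_{m}(h^{\otimes m})=m!\,H_{m}(W(h))$ for $\Vert h\Vert=1$, so your identity holds for the leading-coefficient-one Hermite polynomials; this only shifts constants that are in any case delegated to the cited references.
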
 	
	Recall that the Hermite random variable has a non-Gaussian law (it actually lives in $q$th Wiener chaos) and it represents the value at time $t=1$ of a Hermite process. For more details on Hermite processes, see e.g. \cite{T}.

	Let 
	\begin{equation*}
		\mathbb{Y}_{N}=(Y_{N,1},\ldots, Y_{N,d}), \hskip0.5cm N\geq 1.
	\end{equation*}The purpose is to show the joint convergence of the two-dimensional random sequence $((X_{N}, \mathbb{Y}_{N}), N\geq 1)$. 	Let us recall some facts. For every integers $k,l\geq 1$ and for $i,j=0,1,...,d$ (see \cite{MaTu2}),
	\begin{equation*}
		\mathbf{E} (B ^{H_{i}}_{k+1}- B ^{H_{i}}_{k})(B ^{H_{j}}_{l+1}- B ^{H_{j}}_{l})= \langle L_{k, H_{i}}, L_{l, H_{j}}\rangle _{ L ^{2}(\mathbb{R})}= D(H_{i}, H_{j}) \rho_{\frac{ H_{i}+ H_{j}}{2}}(k-l),
	\end{equation*}
	where $D(H_{i}, H_{j})$ is a constant depending on $ H_{i}, H_{j}$ and  for $v\in \mathbb{Z}$, 
	\begin{equation}\label{rho}
		\rho_{H}(v)= \frac{1}{2} \left( \vert v+1\vert  ^{2H}+\vert v-1\vert -2\vert v\vert ^{2H}\right). 
	\end{equation}
	For $v$ sufficiently large, one has 
	\begin{equation}\label{bro}
		\vert \rho_{H}(v) \vert \leq C_{H} v ^{2H-2}
	\end{equation}
	
	We have the following result. 
	
	\begin{prop}\label{pp3}
		Let $p\geq 1, q_{1},....,q_{d}\geq 2$  be  integer numbers such that $p\geq \max(q_{1},..., q_{d})$ and assume that  for $j=1,...,d$, 
		\begin{equation}\label{31m-3}
			0<	H_{1} < 1-\frac{1}{2p} \mbox{ and } 1-\frac{1}{2q_{j}}< H_{j} < 1.
		\end{equation}
		Consider the sequences $(X_{N}, N\geq 1)$ and $(\mathbb{Y}_{N}, N\geq 1)$ given by (\ref{xn}) and (\ref{yn}), respectively. Then 
		\begin{equation*}
			(X_{N}, \mathbb{Y} _{N}) \to ^{(d)} _{N\to \infty} (Z, c_{q_{j},H_{j}}R^{\gamma_{j}}_{1}, j=1,...,d), 
		\end{equation*} 
		where $Z\sim N (0, \sigma _{p, H_{0}}^{2})$ and $R^{\gamma_{j}}_{1} $ stands for a Hermite random variable (with Hurst index $\gamma_{j}$) independent of $Z$. The constants $\sigma _{p, H_{0}}$ and $ c_{q_{j},H_{j}}$ are those from Theorem \ref{tt3}.
	\end{prop}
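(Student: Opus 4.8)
The plan is to deduce Proposition \ref{pp3} from Proposition \ref{pp1}. Each component $Y_{N,j}=I_{q_j}(g_{N,j})$ lives in the fixed $q_j$th Wiener chaos and $X_N=I_p(f_N)$ lives in the $p$th chaos with $p\ge\max(q_1,\dots,q_d)\ge 2$, so the structural hypotheses of Proposition \ref{pp1} are satisfied; what remains is to check its three analytic assumptions: the marginal convergence (\ref{cc1}) of $X_N$, the $L^2(\Omega)$ convergence (\ref{cc2}) of $\mathbb{Y}_N$, and the asymptotic uncorrelation (\ref{cc3}).

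First I would record the marginal behaviours. Under (\ref{31m-3}), Theorem \ref{tt3}(1) gives $X_N\to^{(d)}Z\sim N(0,\sigma_{p,H_0}^2)$, which is (\ref{cc1}), and Theorem \ref{tt3}(2) gives $Y_{N,j}\to c_{q_j,H_j}R_1^{\gamma_j}$ for each $j$. In the long-range regime $H_j>1-\frac{1}{2q_j}$ the convergence of the normalized Hermite variation is in fact in $L^2(\Omega)$ (this is the standard form of the Non-Central Limit Theorem, see \cite{Ta2}, \cite{T}), and since all the $Y_{N,j}$ and their limits are functionals of the single Wiener process $W$, the vector $\mathbb{U}:=(c_{q_1,H_1}R_1^{\gamma_1},\dots,c_{q_d,H_d}R_1^{\gamma_d})$ is well defined on $(\Omega,\mathcal{G},P)$ and $\mathbb{Y}_N\to\mathbb{U}$ in $L^2(\Omega)$, i.e. (\ref{cc2}). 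Since moreover $q_j\le p$ for every $j$, Proposition \ref{pp4} could also be used in place of Proposition \ref{pp1}.

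The heart of the proof is (\ref{cc3}). If $p\ne q_j$ then $\mathbf{E}X_NY_{N,j}=0$ for all $N$ by orthogonality of multiple integrals of different orders, and there is nothing to do. If $p=q_j$, the isometry (\ref{iso}), the identity $\langle L_{k,H_0}^{\otimes p},L_{l,H_j}^{\otimes p}\rangle_{H^{\otimes p}}=\langle L_{k,H_0},L_{l,H_j}\rangle_H^{p}$, and the covariance formula recalled above (see \cite{MaTu2}) give
\[
\mathbf{E}X_NY_{N,j}=p!\,\langle f_N,g_{N,j}\rangle_{H^{\otimes p}}=\frac{D(H_0,H_j)^p\,p!\,N^{p(1-H_j)-1}}{\sqrt{N}}\sum_{k,l=0}^{N-1}\rho_{\frac{H_0+H_j}{2}}(k-l)^p .
\]
I would then bound the double sum by collecting the equal shifts $v=k-l$ and using the decay estimate (\ref{bro}), whose exponent here is $p(H_0+H_j-2)<0$: one gets $\sum_{k,l=0}^{N-1}|\rho_{(H_0+H_j)/2}(k-l)|^p\le C\,N$ in the summable regime $H_0+H_j<2-\frac1p$ and $\le C\,N^{\,2+p(H_0+H_j-2)}$ (up to a logarithmic factor at the borderline) otherwise. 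Substituting, $|\mathbf{E}X_NY_{N,j}|\le C\,N^{\,p(1-H_j)-1/2}$ in the first case and $|\mathbf{E}X_NY_{N,j}|\le C\,N^{\,p(H_0-1)+1/2}$ in the second; because $p=q_j$, the hypotheses $H_j>1-\frac{1}{2q_j}=1-\frac{1}{2p}$ and $H_0<1-\frac{1}{2p}$ in (\ref{31m-3}) make both exponents strictly negative, so $\mathbf{E}X_NY_{N,j}\to 0$ and (\ref{cc3}) holds.

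With (\ref{cc1})--(\ref{cc3}) verified, Proposition \ref{pp1} applies and yields $(X_N,\mathbb{Y}_N)\to^{(d)}(Z',\mathbb{U})$ with $Z'\sim N(0,\sigma_{p,H_0}^2)$ independent of $\mathbb{U}=(c_{q_j,H_j}R_1^{\gamma_j})_{1\le j\le d}$, which is the assertion. The only genuinely delicate point is the uncorrelation estimate in the case $p=q_j$: one must split into the summable and non-summable regimes of the sequence $(\rho_{(H_0+H_j)/2}(v)^p)_v$ and verify that in each regime the resulting power of $N$ is strictly negative, the margin being furnished exactly by the strict inequalities in (\ref{31m-3}). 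Everything else reduces to direct citations (Theorem \ref{tt3}, the $L^2$ form of the Non-Central Limit Theorem) and an application of Proposition \ref{pp1}.
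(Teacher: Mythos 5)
Your verification of the uncorrelation condition (\ref{cc3}) is correct and essentially identical to the paper's computation: orthogonality of chaoses when $p\not=q_{j}$, and, when $p=q_{j}$, the isometry together with the decay (\ref{bro}) and the split into the summable and non-summable regimes of $\rho_{(H_{0}+H_{j})/2}(\cdot)^{p}$, the strict inequalities in (\ref{31m-3}) making the resulting powers of $N$ negative. The gap is in your treatment of the convergence of $\mathbb{Y}_{N}$. The claim that $Y_{N,j}\to c_{q_{j},H_{j}}R^{\gamma_{j}}_{1}$ in $L^{2}(\Omega)$ is false: the $L^{2}$ form of the non-central limit theorem concerns the rescaled statistic built on the in-fill increments $B^{H_{j}}_{(k+1)/N}-B^{H_{j}}_{k/N}$ (the quantity $Y'_{N,j}$ in the paper's proof), which coincides with $Y_{N,j}$ only in law for each fixed $N$. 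The unit-increment sequence $Y_{N,j}$ itself is not even Cauchy in $L^{2}$: writing $\gamma=\gamma_{j}\in(\tfrac12,1)$ and using $\rho_{H_{j}}(v)^{q_{j}}\sim c\vert v\vert ^{2\gamma-2}$, a direct computation gives $\mathbf{E}[Y_{N,j}Y_{2N,j}]\to c'\,2^{\gamma}/(2\gamma(2\gamma-1))$ while $\mathbf{E}[Y_{N,j}^{2}]\to c'\,2/(2\gamma(2\gamma-1))$, so $\mathbf{E}(Y_{2N,j}-Y_{N,j})^{2}$ stays bounded away from zero. Hence hypothesis (\ref{cc2}) of Proposition \ref{pp1} is simply not available for the sequence $\mathbb{Y}_{N}$ as defined in (\ref{yn}).

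The fallback you mention in passing, Proposition \ref{pp4}, is indeed the right tool (this is exactly what the hypothesis $p\geq \max(q_{1},\dots,q_{d})$ is for), but it requires the \emph{joint} convergence in law (\ref{cc22}) of the vector $\mathbb{Y}_{N}$ to the specific vector $\mathbb{U}=(c_{q_{j},H_{j}}R^{\gamma_{j}}_{1})_{j\leq d}$, whereas Theorem \ref{tt3}(2) only provides the marginal convergences; your sole argument for joint convergence was the erroneous $L^{2}$ claim. The paper closes precisely this point by a scaling argument: since all the $B^{H_{i}}$ are defined through the same Wiener process via (\ref{22i-1}), the self-similarity of $W$ gives, for each fixed $N$, the equality in law of the whole vector $\mathbb{Y}_{N}$ with the in-fill vector $(Y'_{N,1},\dots,Y'_{N,d})$, and the latter converges componentwise in $L^{2}(\Omega)$, hence jointly, to $\mathbb{U}$; this yields (\ref{21i-1}), the convergence in law of the vector, after which the conclusion follows from Proposition \ref{pp4} (equivalently, one may apply Proposition \ref{pp1} to the rescaled pair and transfer back by equality of laws). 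So your architecture and your (\ref{cc3}) estimate are fine, but the $L^{2}$ step must be replaced by this scaling argument (or another proof of joint convergence in law of $\mathbb{Y}_{N}$).
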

	\begin{proof}First, we notice that, as $N\to \infty$,
		\begin{equation}
			\label{21i-1}
			\mathbb{Y}_{N}\to ^{(d)} (c_{q_{1}, H_{1}}R ^{\gamma_{1}}_{1},..., c_{q_{d}, H_{d}}R^{\gamma_{d}}_{1}).
		\end{equation}
		The above claim can be argued in the following way: for every $c>0$, we have the scaling property 
		\begin{equation*}
			\left( B ^{H_{1}}_{ct},..., B ^{H_{d}}_{ct}, t\geq 0\right)\equiv ^{(d)}\left( c ^{H_{1}} B ^{H_{1}}_{t},..., c ^{H_{d}} B ^{H_{d}}_{t}, t\geq 0\right),
		\end{equation*}
		where $"\equiv ^{(d)}" $ means the equivalence of finite dimensional distributions. This is a consequence of (\ref{22i-1}) and of the scaling property of the Wiener process $W$. Then, for all $N\geq 1$,  we have the equality in law
		\begin{equation*}
			(Y_{N,1},..., Y_{N,d})= ^{(d)} ( Y '_{N,1},..., Y '_{N,d})
		\end{equation*}
		where, for every $j=1,...,d$, 
		\begin{equation*}
			Y '_{N, j} =q_{j}! N ^{q_{j}-1} \sum_{k=0} ^{N-1} H_{q_{j}}\left( B ^{H_{j}}_{ \frac{k+1}{N}}-  B ^{H_{j}}_{ \frac{k}{N}}\right)
		\end{equation*}
		with $H_{q}$ the Hermite polynomial of degree $q$. On the other hand, for every $j=1,...,d$, the sequence $ (Y '_{N,j}, N\geq 1)$ converges in $ L^{2} (\Omega)$, as $N\to \infty$, to $ c_{q_{j}, H_{j}}R^{\gamma_{j}}_{1}$ (see e.g. \cite{NP-book}). This implies (\ref{21i-1}). 
		
		In order to apply Proposition \ref{pp1}, we just need to check (\ref{cc3}). Obviouly, this holds for $p\not=q_{j}$, since in this situation $ \mathbf{E} X_{N} Y_{N,j}=0$ for all $N\geq 1$ and for all $j=1,...,d$. We calculate $\mathbf{E}X_{N} Y_{N,j}$ for $p=q_{j}$.  We have, by the isometry formula (\ref{iso}),
		\begin{eqnarray*}
			\mathbf{E} X_{N} Y _{N,j} &=& p! N ^{ p(1-H_{j})-\frac{3}{2}}\sum_{k,l=0} ^{N-1} \langle L_{k, H_{0}}, L_{l, H_{j}}\rangle _{ L ^{2}(\mathbb{R})}^{p} \\
			&=& p! D(H_{0}, H_{j}) ^{p} N ^{ p(1-H_{j})-\frac{3}{2}}\sum_{k,l=0} ^{N-1}  \rho_{\frac{ H_{0}+ H_{j}}{2}}(k-l)^{p},
		\end{eqnarray*}
		and for $N$ large enough, by (\ref{bro}),
		\begin{eqnarray*}
			\vert \mathbf{E} X_{N} Y _{N,j} \vert &\leq & c(H_{0}, H_{j},p) N ^{ p(1-H_{j})-\frac{3}{2}}\left( 1+ \sum_{k=1}^{N} (N-k) k ^{ (H_{0}+ H_{j}-2)p}\right) \\
			&\leq & c(H_{0}, H_{j},p) N ^{ p(1-H_{j})-\frac{3}{2}}\left( 1+N\sum_{k=1}^{N}  k ^{ (H_{0}+ H_{j}-2)p}\right).
		\end{eqnarray*}
		
		Assume $ (H_{0}+ H_{j}-2)p<-1$. In this case, the series $\sum_{k\geq 1} k ^{ (H_{0}+ H_{j}-2)p}$ converges and we get 
		\begin{equation*}
			\vert 	\mathbf{E} X_{N} Y_{N,j} \vert \leq  c(H_{0}, H_{j},p) N ^{ p(1-H_{j})-\frac{1}{2}}\to_{N\to \infty }0
		\end{equation*}
		since $H_{j}>1-\frac{1}{2p}$.

		Assume  $ (H_{0}+ H_{j}-2)p>-1$. Then the sequence $\sum_{k=1}^{N}  k ^{ (H_{0}+ H_{j}-2)p}$ behaves as $ N ^{ (H_{0}+ H_{j}-2)p+1}$ for $N$ large and thus 
		\begin{eqnarray*}
			\vert \mathbf{E} X_{N} Y _{N,j}\vert &\leq &  c(H_{0}, H_{j},p) N ^{ p(1-H_{j})-\frac{3}{2}}\left( 1+ N ^{ (H_{0}+ H_{j}-2)p+1}\right)\\
			&=&  c(H_{0}, H_{j},p)\left( N ^{ p(1-H_{j})-\frac{3}{2}}+N ^{-p(1-H_{0})+ \frac{1}{2}}\right) \to _{N\to \infty}0,
		\end{eqnarray*}
		since $ H_{0}<1-\frac{1}{2p}$ and $H_{j}>1-\frac{1}{2p}$. 
		
		If $ (H_{0}+ H_{j}-2)p=-1$, then $\sum_{k=1}^{N}  k ^{ (H_{0}+ H_{j}-2)p}$ behaves as $\log (N)$ and 
		\begin{equation*}
			\vert \mathbf{E} X_{N} Y _{N,j}\vert \leq   c(H_{0}, H_{j},p) N ^{ p(1-H_{j})-\frac{1}{2}}\log(N)\to _{N\to \infty}0.
		\end{equation*}
		We obtained
		\begin{equation*}
			\vert \mathbf{E} X_{N} Y _{N,j}\vert \leq   c(H_{0}, H_{j},p)
			\begin{cases}
				N ^{ p(1-H_{j})-\frac{1}{2}} \mbox{ if } (H_{0}+H_{j}-2)p< -1\\
				N ^{ p(1-H_{j})-\frac{1}{2}}\log(N), \mbox{ if } (H_{0}+ H_{j}-2)p=-1\\
				N ^{ p(1-H_{j})-\frac{3}{2}}+N ^{-p(1-H_{0})+ \frac{1}{2}}\mbox{ if } (H_{0}+H_{j}-2)p>-1.
			\end{cases}
		\end{equation*}
		In particular $\mathbf{E}X_{N}Y_{N,j}\to _{N\to \infty}0$ and (\ref{cc3}) holds. The conclusion follows by Proposition \ref{pp1}. 
	\end{proof}\qed 
	
	\begin{remark}
		\begin{enumerate}
			
			\item A quantitative bound in Proposition \ref{pp3} can be obtained via (\ref{est}) or (\ref{10a-2}). 
			
			\item Let the above notation prevail.  It is also possible to apply Proposition \ref{pp3} to the estimation of the Hurst parameter $(H_{0},H_{1},..., H_{d})$ from the discrete observations $\left( B ^{H_{j}}_{\frac{i}{N}}, i=0,1,..., N, j=0,1,...,d\right)$. Denote, for $j=0,1,...,d$,
			\begin{equation*}
				S_{N,j}= \frac{1}{N} \sum_{i=0}^{N-1} \left( B ^{H_{j}}_{\frac{i+1}{N}}- B ^{H_{j}}_{\frac{i}{N}}\right) ^{2}.
			\end{equation*}
			Then  
			\begin{equation*}
				\widehat{H}_{N,j}=-\frac{ \log(S_{N,j})}{2\log(N)}, \hskip0.3cm j=0,1,...,d
			\end{equation*}
			are consitent estimators for the Hurst index $H_{j}$  and (see e.g. Section 5.5 in \cite{T})
			\begin{equation*}
				2\sqrt{N} (\widehat{H}_{N,0}- H_{0})= X_{N}+R_{N,0} 
			\end{equation*}
			and for $j=1,...,d$,
			\begin{equation*}
				2N ^{2-2H_{j}}(\widehat{H}_{N,j}- H_{j})=Y_{N,j}+ R_{N,j}
			\end{equation*}
			where $R_{N,j}, j=01,...,d$ converge almost surely to zero as $N\to \infty$. From Proposition \ref{pp3}, we get the joint convergence in law, as $N\to \infty$, of 
			\begin{equation*}
				\left( 2\sqrt{N} (\widehat{H}_{N,0}- H_{0}), 	2N ^{2-2H_{j}}(\widehat{H}_{N,j}- H_{j})\right)
			\end{equation*}
			to 
			\begin{equation*}
				\left( Z, c_{2, H_{j}}R ^{2H_{j}-1}_{1}, j=1,...,d\right),
			\end{equation*}
			$Z\sim N (0, \sigma _{p, H_{0}}^{2})$ and $Z$ is independent of $ R ^{2H_{j}-1}_{1}, j=1,...,d$.
		\end{enumerate}
		
	\end{remark}
	\subsection{Infinite chaos expansion }
	Let $(W(h), h\in H)$ be an isonormal process and let $(h_{i}, i\geq 1)$ be a family of elements of $H$ such that for every $i,j\geq 1$
	\begin{equation*}
		\langle h_{i}, h_{j} \rangle _{H}= \rho_{H} (i-j),
	\end{equation*}
	where $\rho_{H}$ is the auto-correlation function of the fractional noise given by (\ref{rho}).  Consider the sequence $(V_{N}, N\geq 1)$ given by 
	\begin{equation}\label{vn3}
		V_{N}= \frac{1}{\sqrt{N}} \sum_{k=1} ^{N} I_{p} (h_{k}^{\otimes p}).
	\end{equation} and let
	\begin{equation}
		\label{y}
		Y=e ^{W(h_{1})}=\sqrt{e}\sum_{n\geq 0} \frac{1}{n!}I_{n} (h_{1}^{ \otimes n}).
	\end{equation}
	Obviously $(V_{N}, N\geq 1)$ has the same finite-dimensional distribution as (\ref{xn}) (when $H=H_{0}$).  Assume 
	\begin{equation}
		\label{cond}
		0<H < 1-\frac{1}{2p}.
	\end{equation}
	By Theorem \ref{tt3},  if (\ref{cond}) holds true, then  $(V_{N}, N\geq 1)$ converges in law, as $ N\to \infty$, to $Z\sim N(0, \sigma _{p,H}^{2})$. Moreover, we have the following estimate for the Wasserstein distance (see \cite{NP1}): if $N$ is large, 
	\begin{equation}
		\label{5a-3}	d_{W}( V_{N}, Z) \leq C
		\begin{cases} n ^{-\frac{1}{2}}, \mbox{ if } H\in (0, \frac{1}{2}]\\
			n ^{H-1}, \mbox{ if } H\in [\frac{1}{2}, \frac{2p-3}{2p-2})\\
			n^{pH-p+\frac{1}{2}}, \mbox{ if } H \in [\frac{2p-3}{2p-2}, \frac{2p-1}{2p}).
		\end{cases}
	\end{equation}

	We  check the joint convergence  in law of the couple $(X_{N}, Y)$ when $N\to \infty$ and we evaluate the Wasserstein distance associated to it. 
	
	\begin{prop}
		Let $V_{N}, Y$ be given by (\ref{vn3}), (\ref{y}), respectively. Then
		\begin{equation*}
			(V_{N}, Y)\to ^{(d)} (Z, Y)
		\end{equation*}
		where $Z\sim N(0, \sigma _{p,H}^{2})$ is independent of $Y$. Moreover, for $N$ large
		\begin{equation}\label{14i-1}
			d_{W}( P_{(V_{N}, Y)}, P_{Z}\otimes P_{Y}) \leq C
			\begin{cases} n ^{-\frac{1}{2}}, \mbox{ if } H\in (0, \frac{1}{2}]\\
				n ^{H-1}, \mbox{ if } H\in [\frac{1}{2}, \frac{3}{4})\\
				n ^{H-1}+n^{pH-p+\frac{1}{2}}, \mbox{ if } H \in [\frac{3}{4}, \frac{2p-1}{2p}).
			\end{cases}
		\end{equation}
		
	\end{prop}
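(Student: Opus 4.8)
The plan is to obtain the statement as an instance of Corollary~\ref{cor1}, applied with $d=1$ and the ($N$-independent) vector $\mathbb{Y}=Y$, and then to make the resulting bound (\ref{est2}) quantitative by exploiting the explicit form of $Y=e^{W(h_1)}$. I would first record the elementary properties of $Y$: by the chain rule $Y\in\mathbb{D}^{1,2}$ with $DY=Yh_1$, and since $\|h_1\|_H^2=\rho_H(0)=1$ one has $\mathbf{E}Y^2=e^{2\|h_1\|_H^2}=e^{2}$. The only hypothesis of Corollary~\ref{cor1} still to be checked is the asymptotic uncorrelation (\ref{cc6}); by orthogonality of the Wiener chaoses only the $p$th term of (\ref{y}) contributes to $\mathbf{E}[V_N Y]$, so
\[
\mathbf{E}[V_N Y]=\frac{\sqrt{e}}{p!}\,\mathbf{E}\bigl[V_N\,I_p(h_1^{\otimes p})\bigr]=\frac{\sqrt{e}}{\sqrt{N}}\sum_{k=1}^{N}\rho_H(k-1)^{p},
\]
and condition (\ref{cond}) gives $(2H-2)p<-1$, hence $\sum_{v\ge 0}|\rho_H(v)|^{p}<\infty$ by (\ref{bro}); therefore $\mathbf{E}[V_NY]=O(N^{-1/2})\to 0$. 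As $(V_N)$ converges in law to $Z\sim N(0,\sigma_{p,H}^{2})$ by Theorem~\ref{tt3} (recall $(V_N)$ and (\ref{xn}) have the same law) and (\ref{cond}), Corollary~\ref{cor1} yields at once $(V_N,Y)\to^{(d)}(Z,Y)$ with $Z$ independent of $Y$, together with
\[
d_W\bigl(P_{(V_N,Y)},P_Z\otimes P_Y\bigr)\le C\Bigl[\mathbf{E}\bigl|\sigma_{p,H}^{2}-\langle D(-L)^{-1}V_N,DV_N\rangle\bigr|+\mathbf{E}\bigl|\langle D(-L)^{-1}V_N,DY\rangle_H\bigr|\Bigr].
\]

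For the first summand I would use Lemma~\ref{ll2} with $X=Y=V_N=I_p(f_N)$, $f_N=\frac{1}{\sqrt{N}}\sum_{k=1}^N h_k^{\otimes p}$, and the classical identity $\mathbf{E}\langle D(-L)^{-1}V_N,DV_N\rangle=\mathbf{E}V_N^2$, which together give $\mathbf{E}\bigl(\sigma_{p,H}^{2}-\langle D(-L)^{-1}V_N,DV_N\rangle\bigr)^2=\bigl(\sigma_{p,H}^{2}-\mathbf{E}V_N^2\bigr)^2+\sum_{r=1}^{p-1}c(r,p,p)\,\|f_N\widetilde{\otimes}_r f_N\|^2$; hence the first summand is at most $|\sigma_{p,H}^{2}-\mathbf{E}V_N^2|+C\sum_{r=1}^{p-1}\|f_N\otimes_r f_N\|_{H^{\otimes 2p-2r}}$. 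These are precisely the quantities estimated in \cite{NP1} for the sequence (\ref{xn}) (which has the same law as (\ref{vn3})), and they are of order $N^{-1/2}$, $N^{H-1}$ or $N^{pH-p+1/2}$ according to the range of $H$, i.e. they produce the bound (\ref{5a-3}).

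For the second summand I would use that $V_N$ lies in the $p$th chaos, so $D(-L)^{-1}I_p(h_k^{\otimes p})=I_{p-1}(h_k^{\otimes(p-1)})\,h_k$; combined with $DY=Yh_1$ and $\langle h_k,h_1\rangle_H=\rho_H(k-1)$ this yields the exact identity $\langle D(-L)^{-1}V_N,DY\rangle_H=Y\,T_N$ with
\[
T_N:=\frac{1}{\sqrt{N}}\sum_{k=1}^{N}\rho_H(k-1)\,I_{p-1}(h_k^{\otimes(p-1)}),
\]
so that no chaos expansion of $Y$ is needed. By Cauchy--Schwarz and $\mathbf{E}Y^2=e^{2}$ the second summand is at most $e\,(\mathbf{E}T_N^2)^{1/2}$, and since the multiple integrals $I_{p-1}(h_k^{\otimes(p-1)})$ all have the same order, the isometry property of multiple integrals gives the exact formula
\[
\mathbf{E}T_N^2=\frac{(p-1)!}{N}\sum_{k,l=1}^{N}\rho_H(k-1)\,\rho_H(l-1)\,\rho_H(k-l)^{p-1}.
\]

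The crux of the proof is then the sharp estimation of this double sum, which I would carry out by the multi-index power count that is standard for Breuer--Major-type expressions, using $|\rho_H(v)|\le C_H|v|^{2H-2}$ from (\ref{bro}). The interplay of the three decaying factors $\rho_H(k-1)$, $\rho_H(l-1)$ and $\rho_H(k-l)^{p-1}$ produces three regimes for $\mathbf{E}T_N^2$: order $N^{-1}$ when the relevant $\rho_H$-series are summable and the sum localises near the diagonal $k\approx l$; order $N^{2H-2}$ in an intermediate range; and growth of order $N^{2pH-2p+1}$ as $H\uparrow\frac{2p-1}{2p}$. Taking square roots, the second summand of (\ref{est2}) is then $O(N^{-1/2})$, $O(N^{H-1})$ and $O(N^{pH-p+1/2})$ on the corresponding ranges of $H$; combining with the first-summand rates of (\ref{5a-3}) and keeping only the dominant contribution in each range gives exactly (\ref{14i-1}) — in particular the breakpoint $\frac34$ appearing there is the summability threshold of $v\mapsto\rho_H(v)^2$. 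I expect the main obstacle to be precisely this sharp evaluation of the double sum over the whole interval $(0,\frac{2p-1}{2p})$ — the delicate power count that already underlies Berry--Esseen bounds for the Breuer--Major theorem — whereas the reduction to Corollary~\ref{cor1}, the uncorrelation check, and the identity $\langle D(-L)^{-1}V_N,DY\rangle_H=Y\,T_N$ are routine.
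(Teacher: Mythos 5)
Your reduction to Corollary \ref{cor1}, the uncorrelation check via the $p$th chaos component of $Y$ (giving $\mathbf{E}[V_NY]=O(N^{-1/2})$ under (\ref{cond})), and the treatment of the first summand of (\ref{est2}) through Lemma \ref{ll2} and the rates of \cite{NP1} all coincide with the paper, which simply invokes Theorem \ref{tt2} and the known bound (\ref{5a-3}). Where you genuinely diverge is the cross term: you write $\langle D(-L)^{-1}V_N,DY\rangle_H=Y\,T_N$ and decouple $Y$ by Cauchy--Schwarz, so that only $\mathbf{E}T_N^2=\frac{(p-1)!}{N}\sum_{k,l}\rho_H(k-1)\rho_H(l-1)\rho_H(k-l)^{p-1}$ needs to be estimated. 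The paper instead computes $\mathbf{E}\langle D(-L)^{-1}V_N,DY\rangle_H^2$ exactly, using the product formula (\ref{prod}) and the chaos expansion of $Y^2=e\sum_n\frac{2^n}{n!}I_n(h_1^{\otimes n})$, which produces the whole family of sums $T(r,p,N)=\frac1N\sum_{k,l}\rho_H(k-l)^r\rho_H(k-1)^{p-r}\rho_H(l-1)^{p-r}$, $r=0,\dots,p-1$; your single sum is precisely the term $r=p-1$. Your route is shorter and needs no information about $Y$ beyond $\mathbf{E}Y^2<\infty$, at the price of a factor $(\mathbf{E}Y^2)^{1/2}$; the paper's exact computation is what one would need if the constant or the precise dependence on $Y$ mattered, but for the rate in (\ref{14i-1}) both give the same order.

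One point to correct: the double sum is easier than you anticipate, and its behaviour is not the three-regime picture you describe. Splitting off the diagonal and the terms with $k=1$ or $l=1$, and using $\vert\rho_H(k-1)\vert\le\vert\rho_H(k-l)\vert$ for $k>l\ge 2$ together with (\ref{bro}) and the summability of $\vert\rho_H\vert^{p}$ under (\ref{cond}), the paper obtains $T(r,p,N)\le C\bigl(N^{-1}+N^{2H-2}\bigr)$ uniformly on the whole range $H\in\bigl(0,1-\frac{1}{2p}\bigr)$ (this is (\ref{5a-4})); in particular there is no regime of order $N^{2pH-2p+1}$ for this sum, and after taking square roots the second summand is always $O\bigl(N^{-1/2}+N^{H-1}\bigr)$ --- the term $N^{pH-p+\frac12}$ in the third branch of (\ref{14i-1}) comes solely from the first summand via (\ref{5a-3}). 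Since your claimed rate in that range is an over-estimate of the true one, it does not spoil the final bound, but as written the ``sharp power count'' you defer is both unnecessary and misstated; the elementary bound above closes the argument.
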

	\begin{proof} In order to get the joint convergence of $\left( (V_{N}, Y), N\geq 1\right)$, we need to check (\ref{cc6}).  We have 
		\begin{eqnarray*}
			\mathbf{E} (V_{N}Y)&=&\sqrt{e} \frac{1}{\sqrt{N}}\sum_{k=1}^{N} \mathbf{E} I_{p} (h_{k} ^{\otimes p})Y=\sqrt{e} \frac{1}{\sqrt{N}}\sum_{k=1}^{N} \mathbf{E} I_{p} (h_{k} ^{\otimes p})\frac{1}{p!} I_{p} ( h_{1} ^{\otimes p}) \\
			&=& \sqrt{e}\frac{1}{\sqrt{N}}\sum_{k=1}^{N} \langle h_{k}, h_{1} \rangle _{p} =\sqrt{e} \frac{1}{\sqrt{N}}\sum_{k=1}^{N} \rho_{H}(k-1) ^{p}. 
		\end{eqnarray*}
		By isolating the term with $k=1$, we have
		\begin{eqnarray*}
			\mathbf{E}( V_{N}Y)&=& \sqrt{e}\frac{1}{\sqrt{N}}\left( 1+ \sum_{k\geq 2} (k-1) ^{(2H-2)p}\right) \leq C \frac{1}{\sqrt{N}},
		\end{eqnarray*} 
		since the series $\sum_{k\geq 1} k^{(2H-2)p}$ is convergent due to (\ref{cond}).  Then, by Theorem \ref{tt2}, 
		\begin{equation}\label{5a-1}
			(V_{N}, Y) \to ^{(d)} _{N\to \infty} (Z, Y),
		\end{equation}
		where $ Z\sim N(0, \sigma_{p,H} ^{2})$ and $Z, Y$ are independent random variables.

		Let us evaluate the rate of convergence under the Wasserstein distance  for (\ref{5a-1}). We compute the quantity $\mathbf{E}\langle D(-L) ^{-1} V_{N}, DY\rangle _{H}^{2}$. We have

		\begin{equation*}
			D(-L) ^{-1} V_{N} = \frac{1}{ \sqrt{N}} \sum_{k=1} ^{N} I_{p-1} (h_{k}^{\otimes p-1}) h_{k}, \hskip0.3cm DY=Yh_{1}
		\end{equation*}
		and
		\begin{equation*}
			\langle D(-L) ^{-1} V_{N}, DY\rangle _{H}=  \frac{1}{ \sqrt{N}} \sum_{k=1} ^{N} I_{p-1} (h_{k}^{\otimes p-1}) Y \langle h_{k}, h_{1}\rangle _{H}.
		\end{equation*}
		Hence,
		\begin{eqnarray*}
			\mathbf{E}\langle D(-L) ^{-1} V_{N}, DY\rangle _{H}^{2}&=& \frac{1}{N} \sum_{k,l=1}^{N} I_{p-1} (h_{k}^{\otimes p-1})I_{p-1} (h_{l}^{\otimes p-1})Y ^{2} \langle h_{k}, h_{1}\rangle _{H}\langle h_{l}, h_{1}\rangle _{H}\\
			&=&  \frac{1}{N} \sum_{k,l=1}^{N} \sum_{r=0} ^{p-1} r! (C_{p-1}^{r}) ^{2} \mathbf{E} I_{2p-2r-2}\left(h_{k} ^{\otimes p-1} \otimes _{r} h_{l} ^{\otimes p-1}\right) Y^{2}\langle h_{k}, h_{1}\rangle _{H}\langle h_{l}, h_{1}\rangle _{H}, 
		\end{eqnarray*}
		where we applied the product formula (\ref{prod}). Since 
		\begin{equation*}
			Y^{2}= e ^{2W(h_{1})} =e \sum_{n\geq 0} \frac{ 2 ^{n}}{n!} I_{n} (h_{1} ^{\otimes n}),
		\end{equation*}
		we have, for $r=0,...,p-1$,
		\begin{eqnarray*}
			&&	\mathbf{E} I_{2p-2r-2}\left(h_{k} ^{\otimes p-1} \otimes _{r} h_{l} ^{\otimes p-1}\right) Y^{2}\\
			&=&e\frac{ 2 ^{2p-2r-2}}{(2p-2r-2)!} \mathbf{E}  I_{2p-2r-2}\left(h_{k} ^{\otimes p-1} \otimes _{r} h_{l} ^{\otimes p-1}\right) I_{2p-2r-2} ( h_{1} ^{\otimes 2r-2r-2})\\
			&=&e 2 ^{2p-2r-2} \langle (h_{k} ^{\otimes p-1} \widetilde{\otimes} _{r} h_{l} ^{\otimes p-1}, h_{1} ^{\otimes 2p-2r-2}\rangle _{ H ^{\otimes 2p-2r-2}}\\
			&=& e 2 ^{2p-2r-2} \langle h_{k}, h_{l} \rangle _{H}^{r}  \langle h_{k}, h_{1} \rangle _{H}^{p-r-1} \langle h_{l}, h_{1} \rangle _{H}^{p-r-1}.
		\end{eqnarray*}
		Consequently, 
		\begin{eqnarray*}
			\mathbf{E}\langle D(-L) ^{-1} V_{N}, DY\rangle _{H}^{2}
			&=&e \sum_{r=0} ^{p-1} r! (C_{p-1}^{r}) ^{2} 2 ^{2p-2r-2} T(r,p, N)
		\end{eqnarray*}
		with
		\begin{eqnarray}
			T(r,p,N)&=& \frac{1}{N} \sum_{k,l=1}^{N}  \langle h_{k}, h_{1} \rangle _{H}^{p-r} \langle h_{k}, h_{1} \rangle _{H}^{p-r} \langle h_{l}, h_{1} \rangle _{H}^{p-r}\nonumber \\
			&=&  \frac{1}{N} \sum_{k,l=1}^{N}  \rho_{H}(k-l) ^{r} \rho_{H} (k-1)^{p-r} \rho_{H}(l-1) ^{p-r}.\label{5a-2}
		\end{eqnarray}
		We now evaluate $	T(r,p,N)$ for $r=0,1,...,p-1$. We write 
		\begin{eqnarray*}
			T(r,p,N)&=& \frac{1}{N} \sum _{k=1} ^{N}\rho_{H}(k-1) ^{2(p-r)} + \frac{1}{N} \sum_{k,l=1; k\not=l} ^{N} \rho_{H}(k-l) ^{r} \rho_{H} (k-1)^{p-r} \rho_{H}(l-1) ^{p-r}\\
			&:=& T_{1}(r,p,N)+ T_{2}(r,p,N).
		\end{eqnarray*}
		Let us first treat the term $ T_{1}(r,p,N)$ with $r=0,1,..,p-1$. One has 
		\begin{eqnarray*}
			T_{1}(r,p,N)&=&\frac{1}{N} \left( 1+ \sum_{k\geq 2} \rho_{H}(k-1) ^{2(p-r)} \right) \leq C\frac{1}{N}\left( 1+ \sum_{k\geq 2} (k-1) ^{(2H-2) (2p-2r)}\right)\\
			&\leq & C\frac{1}{N} \left( 1+ \sum _{k\geq 1} k ^{2H-2}\right)\leq C\frac{1}{N} \left( 1+ N ^{2H-1}\right) \\
			&\leq & C\left( N ^{-1}+ N ^{2H-2}\right). 
		\end{eqnarray*}
		For $T_{2}(r,p,N)$, we can write
		
		\begin{eqnarray*}
			T_{2}(r,p,N)&=& 2\frac{1}{N} \sum_{k,l=1; k>l} ^{N} \rho_{H}(k-l) ^{r} \rho_{H} (k-1)^{p-r} \rho_{H}(l-1) ^{p-r}\\
			&\leq & C \frac{1}{N}\left( \sum_{k=2} ^{N}  \rho_{H}(k-l) ^{p} + \sum_{k>l\geq 2} (k-l) ^{(2H-2) r} (k-1) ^{(2H-2) (p-r)} (l-1) ^{(2H-2) (p-r)}\right)
		\end{eqnarray*}
		By (\ref{cond}), $ \sum_{k=2} ^{N}  \rho_{H}(k-l) ^{p} <\infty$ and so
		\begin{equation*}
			T_{2}(0,p,N)\leq C\frac{1}{N}\left( 1+\left(  \sum_{k\geq 2} (k-1) ^{(2H-2) p}\right) ^{2} \right) \leq C\frac{1}{N}
		\end{equation*}
		and for $r=1,...,p-1$, since $ (k-1)^{(2H-2)(p-r)}\leq (k-l)^{(2H-2)(p-r)}$, 
		\begin{eqnarray*}
			T_{2}(r,p,N) &\leq & C\frac{1}{N} \left( 1+ \sum _{k>l\geq 2} (k-l) ^{(2H-2)p} (l-1) ^{(2H-2)(p-r)}\right) \\
			&\leq & C\frac{1}{N} \left( 1+ \sum_{l=2}^{N} (l-1) ^{2H-2 } \sum_{k \geq 1} k ^{(2H-2)p}\right)\\
			&\leq & C\frac{1}{N} \left( 1+ N ^{2H-1}\right) \leq C (N ^{-1}+ N ^{2H-2}).
		\end{eqnarray*}
		From the above computations, we deduce that for $N$ sufficiently large,
		\begin{equation}\label{5a-4}
			\mathbf{E}\langle D(-L) ^{-1} V_{N}, DY\rangle _{H}^{2}\leq C (N ^{-1}+ N ^{2H-2}). 
		\end{equation}
		By combining (\ref{5a-3}) and (\ref{5a-4}), we get (\ref{14i-1}). 
		
	\end{proof}

	\subsection{Quantitative bounds in a central-noncentral  limit theorem}\label{sec5}
	
	Our approach allows to give qualitative bounds for the multidimensional sequences of multiple stochastic integral when only  one of these sequences converges to a normal distribution. Here we illustrate the method by treating a two -dimensional sequence in Wiener chaos, one component \\ being asymptotically Gaussian and the second component satisfying a non-central limit theorem. Such estimates are new in the literature and they cannot be obtained via the standard Stein method.   Let $ (B ^{H}_{t}, t\geq 0)$ be a fractional Brownian motion with Hurst index $H\in (0,1)$. For $N\geq 1$, define
	\begin{equation}
		\label{vn}
		V_{N}=q! \frac{1}{\sqrt{N}}\sum_{k=0} ^{N-1} H_{q} \left( B ^{H} _{k+1} - B ^{H} _{k}\right),  
	\end{equation}
	where $H_{q}$ is the Hermite polynomial of degree $q$. Then, the Breuer-Major theorem (see \cite{BM} or Theorem \ref{tt3}) states that, if $H\in \left(0, 1-\frac{1}{2q}\right) $ the sequence $ (V_{N}, N\geq 1)$ converges to a Gaussian random variable $Z\sim N(0, \sigma ^{2}_{q,H})$, where the variance $ \sigma ^{2}_{q,H} $ is explicily known. 
	
	On the other hand, the sequence $ (U_{N}, N\geq 1)$ given by 
	\begin{equation}
		\label{un}U_{N}= 2N ^{1-2H} \sum_{k=0} ^{N-1}H_{2} \left( B ^{H} _{k+1} - B ^{H} _{k}\right), \hskip03cm N\geq 1,
	\end{equation}
	converges in distribution, for $ H\in \left( \frac{3}{4}, 1\right)$, to $ c_{2,H} R ^{(2H-1)}$ where $ R ^{(2H-1)} $ is a Rosenblatt random variable with Hurst parameter $2H-1$ and again the constant $c_{2,H}>0$ is known. 
	
	Moreover, the random sequence $ (V_{N}, U_{N})$ converges in law, as $N\to \infty$, to $(Z, c_{2,H} R ^{(2H-1)})$, with $Z$ independent of $ R ^{(2H-1)}$. This can be obtained from the main findings in  \cite{NR} or \cite{NNP} but it also follows from our Theorem \ref{tt2}. The purpose is to find the rate of convergence, under the Wasserstein distance, for this two-dimensional limit theorem. 
	
	We have the following result. 
	\begin{prop}
		Let $V_{N}, U_{N}$ be given by (\ref{vn}, (\ref{un}), respectively. 
		Assume
		\begin{equation}\label{hyp}
			H\in \left( \frac{3}{4}, 1-\frac{1}{2q}\right) \Rightarrow q\geq 3. 
		\end{equation}
		Then
		\begin{equation*}
			(V_{N}, U_{N})\to ^{(d)} _{N \to \infty} (Z, c_{2,H} R ^{(2H-1)})
		\end{equation*}
		where $Z\sim N(0, \sigma ^{2}_{q,H})$ and $Z$ is independent from the Rosenblatt random variable $ R ^{(2H-1)}$.  Moreover
		\begin{equation}\label{27i-1}
			d_{W} \left( (V_{N}, U_{N}), (Z, c_{2,H} R ^{(2H-1)})\right) \leq c_{q, H} 
			\begin{cases}
				N ^{H-1}+ N ^{\frac{3}{2}-2H} \mbox { for } H\in \left( \frac{3}{4}, 1-\frac{1}{2(q-1)}\right)\\
				N ^{(H-1)q+\frac{1}{2}}+ N ^{\frac{3}{2}-2H}  \mbox{ for } \left( 1-\frac{1}{2(q-1)}, 1-\frac{1}{2q}\right).
			\end{cases}
		\end{equation}
		
	\end{prop}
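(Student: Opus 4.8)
The plan is to apply Proposition~\ref{pp1} with $d=1$, $X_N=V_N=I_q(f_N)$ (so the role of ``$p$'' there is played by our $q$) and $Y_{1,N}=U_N=I_2(g_N)$, and then to make the resulting bound (\ref{est}) quantitative. First I would record the chaos representations: writing $\varepsilon_k$ for the kernel of the increment $B^H_{k+1}-B^H_k$, so that $\Vert\varepsilon_k\Vert_H=1$ and $\langle\varepsilon_k,\varepsilon_l\rangle_H=\rho_H(k-l)$, one has
\begin{equation*}
V_N=I_q(f_N),\quad f_N=\frac{q!}{\sqrt N}\sum_{k=0}^{N-1}\varepsilon_k^{\otimes q},\qquad U_N=I_2(g_N),\quad g_N=2N^{1-2H}\sum_{k=0}^{N-1}\varepsilon_k^{\otimes 2}.
\end{equation*}
Since $U_N$ converges to a Rosenblatt limit we must have $H>\tfrac34$, and since $V_N$ converges to a Gaussian we must have $H<1-\tfrac1{2q}$; these are compatible only when $q\ge3$, in which case (\ref{hyp}) holds automatically. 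From now on $q\ge3$, so $V_N$ lives in the $q$th Wiener chaos with $q\ne2$, hence $\mathbf{E}(V_NU_N)=0$ for every $N$ and the uncorrelation condition (\ref{cc3}) is satisfied trivially. Theorem~\ref{tt3}(1) (with $H_0=H$, $p=q$) gives (\ref{cc1}) with $\sigma=\sigma_{q,H}$, and the non-central limit theorem gives the $L^2(\Omega)$ convergence $U_N\to c_{2,H}R^{(2H-1)}$, i.e.\ (\ref{cc2}) with $\mathbb{U}=c_{2,H}R^{(2H-1)}$. Proposition~\ref{pp1} then yields the asserted joint convergence, with $Z$ independent of the Rosenblatt limit, together with
\begin{equation*}
d_W\!\left(P_{(V_N,U_N)},\,P_Z\otimes P_{c_{2,H}R^{(2H-1)}}\right)\le C\Big[\mathbf{E}\big|\sigma_{q,H}^2-\langle D(-L)^{-1}V_N,DV_N\rangle\big|+\mathbf{E}\big|\langle D(-L)^{-1}V_N,DU_N\rangle\big|\Big]+d_W\!\left(U_N,\,c_{2,H}R^{(2H-1)}\right).
\end{equation*}

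It then remains to bound the three summands on the right. The first one is (a bound on) the quantity controlling the Wasserstein rate in the one-dimensional Breuer--Major theorem: by Cauchy--Schwarz and Lemma~\ref{ll2} applied to $X=Y=V_N$ (whose top contraction $r=q$ is absent since $p=q$) it is dominated by $|\mathbf{E}(V_N^2)-\sigma_{q,H}^2|+\sum_{r=1}^{q-1}\Vert f_N\widetilde{\otimes}_rf_N\Vert$, and estimating these contractions through the power sums $\sum_v\rho_H(v)^m$ exactly as in the derivation of (\ref{5a-3}) — with $p$ there replaced by $q$, using $H>\tfrac34>\tfrac12$ to stay out of the small-$H$ regime — gives $N^{H-1}$ for $H\in\left(\tfrac34,1-\tfrac1{2(q-1)}\right)$ and $N^{q(H-1)+\frac12}$ for $H\in\left[1-\tfrac1{2(q-1)},1-\tfrac1{2q}\right)$. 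The third summand, $d_W\!\left(U_N,c_{2,H}R^{(2H-1)}\right)$, is the rate of convergence in the Rosenblatt (non-central) limit theorem for quadratic variations of fractional Brownian motion, which is classical and of order $N^{\frac32-2H}$; I would quote it rather than reprove it. For the cross term, Cauchy--Schwarz and Lemma~\ref{ll2} applied to $X=V_N$ ($p=q$), $Y=U_N$ ($q=2$) — whose $1_{p=q}$ term vanishes because $q\ne2$ — give
\begin{equation*}
\mathbf{E}\big|\langle D(-L)^{-1}V_N,DU_N\rangle\big|\le\Big(\sum_{r=1}^{2}c(r,q,2)\,\Vert f_N\widetilde{\otimes}_rg_N\Vert_{H^{\otimes q+2-2r}}^2\Big)^{1/2}\le C\big(\Vert f_N\otimes_1g_N\Vert+\Vert f_N\otimes_2g_N\Vert\big),
\end{equation*}
and writing $f_N\otimes_1g_N$ and $f_N\otimes_2g_N$ explicitly in terms of the $\varepsilon_k^{\otimes(q-r)}$ and $\rho_H$, squaring, and bounding the resulting four-fold sums of products of powers of $\rho_H(k-l)\sim|k-l|^{2H-2}$ over the two ranges of $H$ shows that $\Vert f_N\otimes_1g_N\Vert$ and $\Vert f_N\otimes_2g_N\Vert$ are, in each range, at most of the order of the first summand, i.e.\ $O\!\left(N^{H-1}+N^{q(H-1)+\frac12}\right)$; so the cross term never slows down the rate. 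Adding the three bounds and retaining, in each of the two ranges of $H$, only the dominant term among $N^{H-1}$ (resp.\ $N^{q(H-1)+\frac12}$) and $N^{\frac32-2H}$ gives (\ref{27i-1}).

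The main obstacle is the explicit, regime-dependent estimation of all these contraction norms. After squaring, both the Breuer--Major contractions $\Vert f_N\otimes_rf_N\Vert$ and the mixed contractions $\Vert f_N\otimes_rg_N\Vert$ reduce to three- or four-fold sums of products of powers of $\rho_H(k-l)\sim|k-l|^{2H-2}$ over blocks of indices in $\{0,\dots,N-1\}$, subject to the linear relation between the four difference variables coming from $\langle\varepsilon_k,\varepsilon_{k'}\rangle$; one has to decide which difference variables carry summable weights — which is where the threshold $H=1-\tfrac1{2(q-1)}$ enters — and to keep track of the edge effects (e.g.\ $\sum_{k,k'}\rho_H(k-k')=N^{2H}$). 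Carrying this bookkeeping through, and in particular verifying that the cross term stays at or below the Breuer--Major rate, is the technical heart of the argument; the rest (invoking the known Breuer--Major and Rosenblatt rates) is standard.
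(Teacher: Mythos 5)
Your overall strategy coincides with the paper's (split the Wasserstein bound into a Breuer--Major term, a Rosenblatt-approximation term, and a cross term; quote the known rates for the first two; estimate the cross term by contractions of the kernels), but there are two problems. The first is a genuine error: you justify the joint convergence by Proposition \ref{pp1}, claiming that the non-central limit theorem gives $U_{N}\to c_{2,H}R^{(2H-1)}$ in $L^{2}(\Omega)$, i.e.\ hypothesis (\ref{cc2}). For the sequence (\ref{un}) this is false: the increments are taken at integer times, and the convergence holds only in distribution. Indeed the kernels $g_{N}=N^{1-2H}\sum_{k}1_{[k,k+1)}^{\otimes 2}$ do not converge in $\mathcal{H}^{\otimes 2}$; up to an $L^{2}$-negligible remainder $U_{N}$ equals $N^{1-2H}$ times the value at time $N$ of a Rosenblatt process, whose law is constant in $N$ by self-similarity but which has no $L^{2}$ (or a.s.) limit, exactly like $W_{N}/\sqrt{N}$ for a Brownian motion. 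This is precisely why the paper derives the joint convergence from Proposition \ref{pp4}, which needs only convergence in law of $U_{N}$ and applies because $U_{N}$ lives in the second chaos and $2\leq q$ (with (\ref{hyp}) forcing $q\geq 3$, so also $\mathbf{E}(V_{N}U_{N})=0$ and (\ref{cc3}) holds, as you noted). Your argument is repaired by substituting Proposition \ref{pp4} for Proposition \ref{pp1}; with only convergence in law available the last summand in the bound must indeed be $d_{W}\bigl(P_{U_{N}},P_{c_{2,H}R^{(2H-1)}}\bigr)$ -- the form you wrote down, and the one for which the Breton--Nourdin rate $N^{\frac{3}{2}-2H}$ is quoted. (Minor point: with the paper's normalization of Hermite polynomials the factors $q!$ and $2$ are already absorbed into $I_{q}$, $I_{2}$, so your kernels carry superfluous constants; this is immaterial.)

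The second problem is that the cross term, which is the technical heart of the proposition, is asserted rather than proved. The paper spends essentially the whole proof establishing $\mathbf{E}\langle DV_{N},DU_{N}\rangle^{2}\leq c_{q,H}N^{2H-2}$ for $H\in\left(\frac{3}{4},1-\frac{1}{2(q-1)}\right)$ and $\leq c_{q,H}N^{(2H-2)q+1}$ for $H\in\left(1-\frac{1}{2(q-1)},1-\frac{1}{2q}\right)$, by expanding the square into four-fold sums of products of powers of $\rho_{H}$ and running a careful case analysis over coincidences among the four indices; the delicate contribution (all indices distinct, respectively the term $\rho_{H}(i-j)\rho_{H}(i-l)\rho_{H}(j-l)$ handled by a Riemann-sum argument) is exactly where the threshold $H=1-\frac{1}{2(q-1)}$ and the sharp order $N^{2H-2}$ come from. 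Your reduction to $\Vert f_{N}\otimes_{1}g_{N}\Vert$ and $\Vert f_{N}\otimes_{2}g_{N}\Vert$ is equivalent to the paper's computation, and the outcome you claim is the correct one, but the sentence ``bounding the resulting four-fold sums \dots shows that'' is precisely the estimate that has to be carried out; without it the quantitative bound (\ref{27i-1}) is not established.
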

	\begin{proof} The joint convergence of $((V_{N}, U_{N}), N\geq 1)$ is obtained via Proposition \ref{pp4}. By Theorem \ref{tt2}, we have
		\begin{eqnarray*}
			&&d_{W} \left( P_{(V_{N}, U_{N})}, P_{Z}\otimes  P_{c_{2,H} R ^{(2H-1)}}\right)\\
			&\leq& C\left[ \left(\mathbf{E} \left( \sigma ^{2}- \langle DV_{N}, D(-L) ^{-1} V_{N} \rangle\right) ^{2} \right) ^{\frac{1}{2}}+ d_{W} (P_{U_{N}}, P_{c_{2,H}R ^{(2H-1)}})+  \sqrt{ \mathbf{E} \left( \langle DV_{N}, DU_{N} \rangle \right) ^{2}}\right].
		\end{eqnarray*}
		
		We know the rate of convergence to their limits for each of the sequences $(V_{N}, N\geq 1)$ and $(U_{N}, N\geq 1)$.  If  one assumes (\ref{hyp}), then (see Theorem 4.1 in \cite{NP1})
		\begin{equation}
			\left(\mathbf{E} \left( \sigma ^{2}- \langle DV_{N}, D(-L) ^{-1} V_{N} \rangle\right) ^{2} \right) ^{\frac{1}{2}}\leq C_{H,q} \begin{cases}
				N ^{H-1} \mbox{ if } H \in \left( \frac{3}{4}, \frac{2q-3}{2q-2}\right]\\
				N ^{qH-q+\frac{1}{2}} \mbox{ if } H \in \left[ \frac{2q-3}{2q-2}, \frac{2q-1}{2q}\right). 
			\end{cases}\label{16n-1}
		\end{equation}
		Moreover, for any $H$ satisfying (\ref{hyp}) (see \cite{BN} or \cite{NP-book}, relation (7.4.13))
		\begin{equation}\label{16n-2}
			d_{W} (U_{N}, c_{2, H} R ^{(2H-1)} )\leq C_{H} N ^{\frac{3}{2}-2H}. 
		\end{equation}
		In particular, if $q=3$, it follows from (\ref{16n-1}) and (\ref{16n-2}) that
		\begin{eqnarray}
			&&	d_{W} (V_{N}, Z)+ 	d_{W} (U_{N}, c_{2, H} R ^{(2H-1)} )\leq C_{H}  \left( N ^{\frac{3}{2}-2H}+ N ^{3H-\frac{5}{2}}\right) \nonumber \\
			&&\leq C_{H} \begin{cases}
				N ^{\frac{3}{2}-2H} \mbox{ if } H \in \left(\frac{3}{4}, \frac{4}{5}\right)\\N ^{3H-\frac{5}{2}} \mbox{ if }H \in \left[ \frac{4}{5}, \frac{5}{6}\right) 
			\end{cases}\label{33}
		\end{eqnarray}
		Let us estimate the quantity $ \sqrt{ \mathbf{E} \left( \langle DV_{N}, DU_{N} \rangle \right) ^{2}}$.  Denote by $\mathcal{H}$ the canonical Hilbert space associated to the fractional Brownian motion, defined as the closure of the set of step functions on the positive  real line with respect to the scalar product 
		\begin{equation*}
			\langle 1_{[0,t]}, 1_{[0,s]}\rangle _{\mathcal{H}}= \mathbf{E} B ^{H} _{t} B ^{H}_{s}=\frac{1}{2}( t ^{2H}+ s^{2H}-\vert t-s\vert ^{2H}).
		\end{equation*}
		We can write, if $I_{q}$ is the multiple stochastic integral with respect to the isonormal process generated by $ B ^{H}$, 
		\begin{equation*}
			V_{N}= I_{q} (f_{N}) \mbox{ with } f_{N} = \frac{1}{\sqrt{N}} \sum_{k=1}^{N} h_{k} ^{\otimes q}
		\end{equation*}
		and
		\begin{equation*}
			U_{N}= I_{2}(g_{N}) \mbox{ with } g_{N}= N ^{1-2H} \sum_{l=1}^{N} h_{l} ^{\otimes 2},   
		\end{equation*}
		where $h_{k}= 1_{ [k-1, k)}$ for $k=1,..., N$. In particular $\Vert h_{k}\Vert _{\mathcal{H}}=1$ and
		\begin{equation}\label{2d-1}
			\langle h_{k}, h_{l}\rangle _{\mathcal{H}}= \rho_{H} (k-l)
		\end{equation}
		with $\rho_{H}$ from (\ref{rho}). Thus
		\begin{eqnarray*}
			\langle DV_{N}, DU_{N}\rangle  &=& 2q N ^{\frac{1}{2}-2H} \sum_{k,l=1}^{N} I_{q-1} (h_{k}^{ \otimes (q-1)}  I_{1} (h_{l}) \langle h_{k}, h_{l} \rangle \\
			&=&  2q N ^{\frac{1}{2}-2H} \sum_{k,l=1}^{N} \left[ I_{q} ( h_{k} ^{\otimes (q-1)} \otimes h_{l}) + (q-1) I_{q-2} ( h_{k} ^{\otimes (q-1)}\otimes _{1}  h_{l})\right]  \langle h_{k}, h_{l}\rangle  \\
			&=& 2q N ^{\frac{1}{2}-2H} \sum_{k,l=1}^{N} \left[ I_{q} ( h_{k} ^{\otimes (q-1)} \otimes h_{l}) + (q-1) I_{q-2} ( h_{k} ^{\otimes (q-2)})\langle h_{k}, h_{l}\rangle \right] \langle h_{k}, h_{l}\rangle,
		\end{eqnarray*}
		where we applied the product formula (\ref{prod}). Consequently,
		\begin{eqnarray*}
			&&	\mathbf{E}	\langle DV_{N}, DU_{N} \rangle ^{2}\\ 
			&\leq &c_{q}N ^{1-4H} \left[ \sum_{i,j,k,l=1}^{N} \langle h_{i} ^{\otimes (q-1)}\tilde{\otimes} h_{j},  h_{k} ^{\otimes (q-1)}\tilde{\otimes} h_{l}\rangle  \langle h_{i}, h_{j} \rangle  \langle h_{k}, h_{l} \rangle  + \langle h_{i}, h_{k} \rangle ^{q-2} \langle h_{i}, h_{j} \rangle ^{2}  \langle h_{k}, h_{l} \rangle ^{2} \right]\\
			&\leq & c_{q} N ^{1-4H}\left[  \sum_{i,j,k,l=1}^{N} \langle h_{i}, h_{k} \rangle ^{q-1} \langle h_{i}, h_{j} \rangle  \langle h_{k}, h_{l} \rangle  \langle h_{j}, h_{l} \rangle +\sum_{i,j,k,l=1}^{N} \langle h_{i}, h_{k} \rangle ^{q-2} \langle h_{i}, h_{j} \rangle  \langle h_{k}, h_{l} \rangle  \langle h_{i}, h_{l} \rangle \langle h_{j}, h_{k} \rangle \right. \\
			&&+\left.  \sum_{i,j,k,l=1}^{N} \langle h_{i}, h_{k} \rangle ^{q-2} \langle h_{i}, h_{j} \rangle ^{2}  \langle h_{k}, h_{l} \rangle ^{2} \right]=: a_{1,N}+ a_{2, N} + a_{3, N}.
		\end{eqnarray*}
		We used  Lemma 4.5 in \cite{T} in order to expres the scalar product $ \langle h_{i} ^{\otimes (q-1)}\tilde{\otimes} h_{j},  h_{k} ^{\otimes (q-1)}\tilde{\otimes} h_{l}\rangle $.  Using the inequality 
		$$\langle h_{i}, h_{j} \rangle  \langle h_{k}, h_{l} \rangle  \langle h_{i}, h_{l} \rangle \langle h_{j}, h_{k} \rangle\leq \frac{1}{2} \left( \langle h_{i}, h_{j} \rangle ^{2}  \langle h_{k}, h_{l} \rangle ^{2}+ \langle h_{i}, h_{l} \rangle ^{2}  \langle h_{k}, h_{j} \rangle ^{2}\right),$$ we get
		$a_{2,N}\leq  a_{3, N} $ so we have to estimate $ a_{1,N} $ and $ a_{3, N}$.  Now, by (\ref{2d-1}),
		\begin{eqnarray*}
			a_{3,N} &=& c_{q} N ^{1-4H} \sum_{i,j,k,l=1}^{N} \rho_{H} (i-k) ^{q-2} \rho_{H} (i-j)^{2} \rho _{ H}(k-l) ^{2} \\
			&\leq & c_{q} N ^{1-4H} \sum_{i,k=1} ^{N}  \rho_{H} (i-k) ^{q-2}\left( \sum _{a=-N}^{N} \rho_{H} (a) ^{2} \right) ^{2}.
		\end{eqnarray*}
		By using the bound  $ \sum _{a=-N}^{N} \rho_{H} (a)^{2}\leq c_{H} N ^{4H-3}$ we obtain 
		\begin{eqnarray*}
			a_{3,N}&\leq & c_{q,H} N ^{4H-5}  \sum_{i,k=1} ^{N}  \rho_{H} (i-k) ^{q-2} \leq c_{q,H}N ^{4H-4} \sum _{k\geq 1} k ^{(2H-2)(q-2)}\\
			&\leq &  c_{q,H}N ^{4H-4}\begin{cases}
				1, \mbox{ if } H<1-\frac{1}{2(q-2)}\\
				\log (N) \mbox{ if } H=1-\frac{1}{2(q-2)}\\
				N ^{(2H-2)(q-2)+1} \mbox{ if } H \in \left( 1-\frac{1}{2(q-2)}, 1-\frac{1}{2q}\right).
			\end{cases}
		\end{eqnarray*}
		For $q=3$, we have for $H\in \left( \frac{3}{4}, \frac{5}{6}\right)$,
		\begin{equation}\label{31}
			a_{3,N} \leq c_{H} N ^{6H-5}
		\end{equation}
		Let us deal with
		\begin{eqnarray*}
			a_{1,N} &= & c_{q,H} N ^{1-4H} \sum_{i,j,k,l=1}^{N}\rho_{H} (i-k) ^{q-1} \rho_{H} (i-j)\rho_{H} (k-l) \rho_{H} (j-l).
		\end{eqnarray*}
		This summand is the most complicated. Similar quantities (but not exactly the same!) have been treated in e.g. \cite{NP1}, proof of Theorem 4.1. We decompose  the sum over $(i,j,k,l)\in \{1,...,N\}^{4} $ upon the following cases: 
		\begin{enumerate}
			\item $(i=j=k=l)$, 
			\item $\left(   (i=j=k, l\not=i), (i=j=l, k\not=i), (i=k=l, j\not=i), (j=k=l, i\not= j)\right)$,
			\item   $\left(( i=j, k=l, k\not=i), (i=k, j=l, j\not=i), (i=l, j=k, j\not=i)\right)$,
			\item 
			\begin{eqnarray*}
				&& \left( (i=j, k\not=i, k\not=l, l\not=i), (i=k, j\not=i, j\not=l, k\not=l), (i=l, k\not=i, k\not=j, j\not=i),\right. \\
				&&	\left.(j=k, k\not=i, k\not=l, l\not=i), (j=l, k\not=i, k\not=l, j\not=i), (k=l, k\not=i,k\not=j,j\not=i)\right).
			\end{eqnarray*}
			\item $i,j,k,l$ are all different.
		\end{enumerate}
		We denote by $ a_{1, N} ^{(j)}, j=1,2,3,4,5$ the sum of all the terms from the groups 1.-5. defined above. The first of these terms can be easily estimated since 
		\begin{equation}
			\label{a1}
			a_{1,N} ^{(1)}=c_{q,H} N ^{1-4H}  \sum _{i=1}^{N} \rho_{H}(0) ^{q+2}= c_{q,H} N ^{2-4H}.
		\end{equation}
		For, the first sum from point 2. 
		
		$$c_{q,H} N ^{1-4H} \sum_{i,l=1}^{N} \rho_{H} (i-l) ^{2} \leq c_{q,H} N ^{2-4H} \sum_{i=1}^{N} i ^{4H-4} \leq  c_{q,H} N ^{2-4H} N ^{4H-3} =c_{q,H} N ^{-1}$$
		while the second from point 2.
		$$c_{q,H} N ^{1-4H} \sum_{i,k=1}^{N} \rho_{H}(i-k) ^{q} \leq c_{q,H} N ^{2-4H} \sum_{ k\in \mathbb{Z}} \rho_{H}(k)^{q} \leq c_{q,H} N ^{2-4H}.$$
		So, by symmetry,
		\begin{equation}
			\label{a2}
			a_{2,N} ^{(2)} \leq c_{q,H} ( N^{-1}+ N ^{2-4H}) \leq c_{q,H} N ^{-1}.
		\end{equation}
		The sums from group 3. are similar to the those from group 2. and we get 
		\begin{equation}
			\label{a3}
			a_{1,N} ^{(3)}\leq c_{q,H} N ^{-1}.
		\end{equation}
		Let us with the summands corresponding to point 4. The first one in this set reads 
		\begin{eqnarray*}
			&&	c_{q, H} N ^{1-4H} \sum_{ i\not=k\not=l\not=i}\rho_{H} (i-k) ^{q-1} \rho_{H} (k-l) \rho_{H} (i-l) \\
			&&\leq  c_{q} N ^{2-4H} \sum_{a,b=-N}^{N} \vert \rho_{H}\vert (a-b) ^{q-1} \vert \rho_{H}\vert (a)  \vert \rho_{H}\vert (b)  \leq  c_{q} N ^{2-4H} \sum_{a,b=-N}^{N} \vert  \rho_{H}\vert (a-b) ^{q-1} \vert \rho_{H}\vert (a) ^{2}\\
			&&\leq  c_{q,H} N ^{2-4H} \sum_{a=-N}^{N} \vert a\vert ^{4H-4}\sum_{b=-2N}^{2N} \vert b\vert ^{(2H-2) (q-1)}.
		\end{eqnarray*}
		It follows that this term is less than
		$$c_{q,H}\begin{cases}  N ^{-1} \mbox{ if } H < 1-\frac{1}{2(q-1)}\\
			N ^{-1} \log N \mbox{ if } H=1-\frac{1}{2(q-1)}\\
			N ^{(2H-2) (q-1)+2} \mbox{ if } H \in \left( 1-\frac{1}{2(q-1)}, 1-\frac{1}{2q}\right).
		\end{cases}
		$$
		Regarding the second summant in 4., we can bound as follows
		\begin{eqnarray*}
			&&	c_{q,H} N ^{1-4H}\sum_{i\not=j\not=l\not=i} \rho_{H} (i-j) \rho_{H} (i-l) \rho_{H}(j-l)\\
			&&\leq c_{q,H} N ^{1-4H} N^{3} N ^{6H-6} \frac{1}{ N ^{3}}\sum_{i\not=j\not=l\not=i}\left( \frac{ \vert i-j\vert}{N} \right) ^{2H-2} \left(\frac{ \vert i-l\vert}{N} \right) ^{2H-2} \left(\frac{ \vert j-l\vert}{N} \right) ^{2H-2} \\
			&=& c_{q,H} N ^{2H-2}  \frac{1}{ N ^{3}}\sum_{i\not=j\not=l\not=i}\left( \frac{ \vert i-j\vert}{N} \right) ^{2H-2} \left(\frac{ \vert i-l\vert}{N} \right) ^{2H-2} \left(\frac{ \vert j-l\vert}{N} \right) ^{2H-2} \leq  c_{q,H} N ^{2H-2},
		\end{eqnarray*}
		since the quantity $ \frac{1}{ N ^{3}}\sum_{i\not=j\not=l\not=i}\left( \frac{ \vert i-j\vert}{N} \right) ^{2H-2} \left(\frac{ \vert i-l\vert}{N} \right) ^{2H-2} \left(\frac{ \vert j-l\vert}{N} \right) ^{2H-2} $ is a Riemann sum that converges to $\int_{[0,1]^{3}} \vert x-y\vert ^{2H-2} \vert y-z\vert ^{2H-2} \vert z-x\vert ^{2H-2}dxdydz <\infty$. We have similar bounds for the other terms and we get
		\begin{equation}\label{a4}
			a_{1, N} ^{(4)} \leq c_{q, H} N ^{2H-2}.
		\end{equation}
		Notice that the estimation of the dominant term, the second in this group is sharp. 
		
		For the only summand in group 5., we separate its analysis uopon all the possible orders: $i>j>k>l, i>j>l>k,......$. The first summand is treated as follows 
		\begin{eqnarray*}
			&& c_{q} N ^{1-4H}\sum_{i>j>k>l} \rho_{H} (i-k) ^{q-1} \rho_{H} (i-j)\rho_{H} (k-l) \rho_{H} (j-l)\\
			&\leq & c_{q,H} N ^{1-4H}\sum_{i>j>k>l} \vert i-k\vert ^{2H-2)(q-1)}\vert i-j\vert ^{2H-2} \vert k-l\vert ^{2H-2} \vert j-l\vert ^{2H-2}\\
			&\leq & c_{q,H} N ^{1-4H}\sum_{i>j>k>l} \vert i-k\vert ^{(2H-2)(q-1)}\vert i-j\vert ^{2H-2} \vert k-l\vert ^{4H-4}\\
			&\leq &  c_{q,H} N ^{1-4H}\sum_{i>j>k} \vert i-k\vert ^{(2H-2)(q-1)}\vert i-j\vert ^{2H-2} \sum_{ l=-N} ^{N} \vert l\vert ^{4H-4}\\
			&\leq & c_{q,H} N ^{-2} \sum_{i>j>k} \vert i-k\vert ^{(2H-2)(q-1)}\vert i-j\vert ^{2H-2}\\
			&\leq &c_{q,H} N ^{-2} \sum_{i>k} \vert i-k\vert ^{(2H-2)(q-1)}\sum_{ j=-N} ^{N} \vert j\vert ^{2H-2} \leq c_{q,H} N ^{2H-3}\sum_{i>k} \vert i-k\vert ^{(2H-2)(q-1)}\\
			&\leq & c_{q,H} N ^{2H-2} \sum_{k=1}^{N}k ^{(2H-2)(q-1)}.
		\end{eqnarray*}
		With analogous estimates for the other cases of point 5., we obtain 
		\begin{equation}\label{a5}
			a_{1,N}^{(5)}\leq c_{q,H} \begin{cases}
				N ^{2H-2} \mbox{ if } H< 1-\frac{1}{2(q-1)}\\
				N ^{2H-2} \log N \mbox{ if }=   1-\frac{1}{2(q-1)}\\
				N ^{(2H-2)q+1}\mbox{ if } H\in \left(  1-\frac{1}{2(q-1)},  1-\frac{1}{2q}\right).
			\end{cases}
		\end{equation}
		So, by (\ref{a1}), (\ref{a2}), (\ref{a3}), (\ref{a4}) and (\ref{a5})
		\begin{eqnarray*}
			a_{1,N}
			&\leq& c_{q,H} \begin{cases}
				N ^{2H-2} \mbox{ if } H \in \left(\frac{3}{4}, 1-\frac{1}{2(q-1)}\right)\\
				N ^{(2H-2)q+1} \mbox{ if } H\in \left( 1-\frac{1}{2(q-1)}, 1-\frac{1}{2q}\right).
			\end{cases}
		\end{eqnarray*}
		Thus
		\begin{equation}\label{3d-1}
			\mathbf{E}	\langle DV_{N}, DU_{N} \rangle ^{2}\leq c_{q,H} \begin{cases}
				N ^{2H-2} \mbox{ if } H \in \left(\frac{3}{4}, 1-\frac{1}{2(q-1)}\right)\\
				N ^{(2H-2)q+1} \mbox{ if } H\in \left( 1-\frac{1}{2(q-1)}, 1-\frac{1}{2q}\right),
			\end{cases},
		\end{equation}
		the bound on the first branch being immaterial for $q=3,4$.  If $q=3$, then 
		\begin{equation}\label{32}
			\mathbf{E}	\langle DV_{N}, DU_{N} \rangle ^{2}\leq c_{H}N ^{6H-5}.
		\end{equation}
		We then obtain (\ref{27i-1}). 
		
	\end{proof} \qed

	\begin{remark}\label{rem4}
		\begin{enumerate} 
			\item For $q=3$, we have from (\ref{33}), (\ref{31}) and (\ref{32}),
			\begin{equation}\label{27i-2}
				d_{W} \left( (V_{N}, U_{N}), (Z, c_{2,H} R ^{(2H-1)})\right) \leq C_{H}\begin{cases}
					N ^{\frac{3}{2}-2H} \mbox{ if } H \in \left(\frac{3}{4}, \frac{4}{5}\right)\\N ^{3H-\frac{5}{2}} \mbox{ if }H \in \left[ \frac{4}{5}, \frac{5}{6}\right).
				\end{cases}
			\end{equation}
			
			\item 	It follows from the above calculation that the quantity $ \left( \mathbf{E} \langle DV_{N}, DU_{N} \rangle ^{2} \right) ^{ \frac{1}{2}}$, which somehow measures the correlation between $ V_{N} $ and $ U_{N}$ has the same size, for $N$ large, as $	d_{W} (V_{N}, Z)$ (compare (\ref{16n-1}) and (\ref{3d-1})).
			
			\item A quantitative bound for the above limit theorem can be also obtained by using the estimate (\ref{10a-2}) in Remark \ref{rem3}.  Notice that (\ref{10a-2})gives 
			\begin{equation*}
				\mathbf{E} \langle DV_{N}, DU_{N} \rangle ^{2} \leq C_{H} \mathbf{E} \left( \Vert f_{N} \otimes _{1} f_{N} \Vert +  \Vert f_{N} \otimes _{2} f_{N} \Vert \right). 
			\end{equation*}
			By using the calculations in the proof of Theorem 4.1 in \cite{NP1} and since $ \mathbf{E}G_{N} \leq C_{H} $ (with $C_{H}>0$ not depending on $N$), we get 
			\begin{equation*}
				\mathbf{E} \langle DV_{N}, DU_{N} \rangle ^{2}\leq C_{H} \left( N ^{-\frac{1}{2}}+ N ^{H-1}+ N ^{1-q(1-H)}\right),
			\end{equation*}
			which is in general less good than (\ref{27i-1}). For instance, if $q=3$, we have 
			\begin{equation*}
				\mathbf{E} \langle DV_{N}, DU_{N} \rangle ^{2}\leq C_{H}\left( N ^{-\frac{1}{2}}+ N ^{H-1}+ N ^{ 3H-2}\right),
			\end{equation*}
			and leads, for $ H\in \left( \frac{3}{4}, \frac{5}{6}\right)$, to 
			\begin{equation*}
				d_{W} \left( (V_{N}, U_{N}), (Z, c_{2,H} R ^{(2H-1)})\right) \leq C_{H}N ^{ \frac{3H}{2}-1},
			\end{equation*}
			which clearly is less optimal than (\ref{27i-2}).
		\end{enumerate}
	\end{remark}
	
	\subsection{The evolution of the solution to a semilinear stochastic  equation}
	The theory developed in Section \ref{sec2} can also be applied to quantify the evolution of a stochastic system defined by a stochastic differential equation. We present here a very simple example (a more complex situation, in the KPZ context, has been treated in \cite{Pi}). Let $ \lambda \in \mathbb{R}$ and consider the stochastic equation
	\begin{equation}
		\label{29i-1}
		X^{\lambda}_{t} = X_{0} + \lambda \int_{0} ^{t} b(X^{\lambda} _{s})ds + W_{t}, \hskip0.5cm t\geq 0
	\end{equation} 
	where $(W_{t}, t\geq 0)$ is a Wiener process. We assume that the drift $b: \mathbb{R}\to \mathbb{R}$ is differentiable and satisfies $\vert b'(x)\vert \leq M$ for every $x\in \mathbb{R}$. Then (\ref{29i-1}) admits a unique solution which is Malliavin differentiable and (see e.g. Exercice 2.2.1 in \cite{N}) for $a<t$, 
	\begin{equation*}
		D_{a} X ^{\lambda }_{t}= e ^{ \int_{a} ^{t} b'(X^{\lambda}_{s})ds}. 
	\end{equation*}
	The solution to (\ref{29i-1}) is a Gaussian process for $\lambda =0$ and for $\lambda \not=0$, its law is non-Gaussian if $b$ is nonlinear. Theorem \ref{tt1} allows to quantify the dependence structure between the components of the vector $ ( X _{t}^{\lambda}, X_{t}^{0})$ at each time $t>0$.  Indeed, by Theorem \ref{tt1},
	\begin{eqnarray*}
		&&	d_{W} \left( P_{( X _{t}^{\lambda}, X_{t}^{0}) }, P_{ X ^{
				\lambda}_{t}}\otimes P_{ X^{0}_{t}}\right) \leq C \int_{0} ^{t} D_{a} X_{t} ^{\lambda} da  \\
		&&\leq  C \int_{0} ^{t} e ^{ \int_{a} ^{t} b'(X^{\lambda}_{s})ds}da \leq C\int_{0} ^{t} e ^{\lambda M (t-\lambda)}= \frac{C}{M\lambda} ( e ^{M\lambda t}-1):= g(\lambda).
	\end{eqnarray*}
	The function $g$ provides a quantitative estimate for the dependence between $ X^{\lambda}$ and $X ^{0}$ for any $\lambda$, at any time. This function converges to a constant when $\lambda \to 0$ and to infinity as $\lambda \to \infty$. When $\lambda$ tends to $-\infty$, $g(\lambda)$ converges to zero, i.e.  the drift forces  the solution to (\ref{29i-1}) to be independent of the noise at each time.

	\section{Appendix: Wiener-Chaos and Malliavin derivatives}

	Here we describe the elements from stochastic analysis that we will need in the paper. Consider $H$ a real separable Hilbert space and $(W(h), h \in H)$ an isonormal Gaussian process on a probability space $(\Omega, {\cal{A}}, P)$, which is a centered Gaussian family of random variables such that ${\bf E}\left[ W(\varphi) W(\psi) \right]  = \langle\varphi, \psi\rangle_{H}$. Denote by  $I_{n}$ the multiple stochastic integral with respect to
	$B$ (see \cite{N}). This mapping $I_{n}$ is actually an isometry between the Hilbert space $H^{\odot n}$(symmetric tensor product) equipped with the scaled norm $\frac{1}{\sqrt{n!}}\Vert\cdot\Vert_{H^{\otimes n}}$ and the Wiener chaos of order $n$ which is defined as the closed linear span of the random variables $H_{n}(W(h))$ where $h \in H, \|h\|_{H}=1$ and $H_{n}$ is the Hermite polynomial of degree $n \in {\mathbb N}$
	\begin{equation*}
		H_{n}(x)=\frac{(-1)^{n}}{n!} \exp \left( \frac{x^{2}}{2} \right)
		\frac{d^{n}}{dx^{n}}\left( \exp \left( -\frac{x^{2}}{2}\right)
		\right), \hskip0.5cm x\in \mathbb{R}.
	\end{equation*}
	The isometry of multiple integrals can be written as follows: for $m,n$ positive integers,
	\begin{eqnarray}
		\mathbf{E}\left(I_{n}(f) I_{m}(g) \right) &=& n! \langle \tilde{f},\tilde{g}\rangle _{H^{\otimes n}}\quad \mbox{if } m=n,\nonumber \\
		\mathbf{E}\left(I_{n}(f) I_{m}(g) \right) &= & 0\quad \mbox{if } m\not=n.\label{iso}
	\end{eqnarray}
	It also holds that
	\begin{equation*}
		I_{n}(f) = I_{n}\big( \tilde{f}\big)
	\end{equation*}
	where $\tilde{f} $ denotes the symmetrization of $f$ defined by the formula \\ 
	$$\tilde{f}
	(x_{1}, \ldots , x_{n}) =\frac{1}{n!} \sum_{\sigma \in {\cal S}_{n}}
	f(x_{\sigma (1) }, \ldots , x_{\sigma (n) } ).$$
	\\\\
	We recall that any square integrable random variable which is measurable with respect to the $\sigma$-algebra generated by $W$ can be expanded into an orthogonal sum of multiple stochastic integrals
	\begin{equation}
		\label{sum1} F=\sum_{n=0}^\infty I_{n}(f_{n})
	\end{equation}
	where $f_{n}\in H^{\odot n}$ are (uniquely determined)
	symmetric functions and $I_{0}(f_{0})=\mathbf{E}\left[  F\right]$.
	\\\\
	Let $L$ be the Ornstein-Uhlenbeck operator
	\begin{equation*}
		LF=-\sum_{n\geq 0} nI_{n}(f_{n})
	\end{equation*}
	if $F$ is given by (\ref{sum1}) and it is such that $\sum_{n=1}^{\infty} n^{2}n! \Vert f_{n} \Vert ^{2} _{{\cal{H}}^{\otimes n}}<\infty$.
	\\\\
	For $p>1$ and $\alpha \in \mathbb{R}$ we introduce the Sobolev-Watanabe space $\mathbb{D}^{\alpha ,p }$  as the closure of
	the set of polynomial random variables with respect to the norm
	\begin{equation*}
		\Vert F\Vert _{\alpha , p} =\Vert (I -L) ^{\frac{\alpha }{2}} F \Vert_{L^{p} (\Omega )}
	\end{equation*}
	where $I$ represents the identity. We denote by $D$  the Malliavin  derivative operator that acts on smooth functions of the form $F=g(W(h_1), \dots , W(h_n))$ ($g$ is a smooth function with compact support and $h_i \in H$)
	\begin{equation*}
		DF=\sum_{i=1}^{n}\frac{\partial g}{\partial x_{i}}(W(h_1), \ldots , W(h_n)) h_{i}.
	\end{equation*}
	The operator $D$ is continuous from $\mathbb{D}^{\alpha , p} $ into $\mathbb{D} ^{\alpha -1, p} \left( H\right).$ The adjoint of $D$ is the divergence integral, denoted by $\delta$. It acts from $\mathbb{D} ^{\alpha -1, p} \left( H\right)$ onto $\mathbb{D}^{\alpha , p} $.

	We will intensively use the product formula for multiple integrals.
	It is well-known that for $f\in H^{\odot n}$ and $g\in H^{\odot m}$
	\begin{equation}\label{prod}
		I_n(f)I_m(g)= \sum _{r=0}^{n\wedge m} r! \left( \begin{array}{c} n\\r\end{array}\right) \left( \begin{array}{c} m\\r\end{array}\right) I_{m+n-2r}(f\otimes _r g)
	\end{equation}
	where $f\otimes _r g$ means the $r$-contraction of $f$ and $g$ (see e.g. Section 1.1.2 in \cite{N}). This contraction is defined, when $ H= L^{2}(T, \mathbb{B}, \nu)$ (where $\nu$ is a sigma-finite measure without atoms)
	\begin{eqnarray}
	&&	\label{contra}
		(f \otimes _{r} g) (t_{1},...,t_{n+m-2r})\\
		&=& \int_{T^{r}} f(u_{1},...,u_{r}, t_{1},...,t_{n-r})g(u_{1},...,u_{r}, t_{n-r+1},...,t_{n+m-2r})du_{1}....du_{r},\nonumber
	\end{eqnarray}
	for $r=1,..., n\wedge m$ and $f\otimes _{0}g=f\otimes g$, the tensor product. It holds that $f\otimes _{r}g\in H ^{\otimes n+m-2r}= L^{2}(T ^{n+m-2r})$. In general, the contraction $f\otimes _{r}g$ is not symmetric and we denote by $f\widetilde{\otimes } _{r} g$ its symmetrization. 
	
	%Let $F:=I_n(f)$ and $G:=I_m(g)$ where $n,m\in {\mathbb N}$.
	%The second moment of $\langle DF,DG\rangle _H$ is written by $f$ and $g$ explicitly, as
	%\begin{equation}\label{eq:DFDG2}
	%\langle DF,DG\rangle _H = nm \int _T I_{n-1}(f(\cdot ,t)) I_{m-1}(g(\cdot ,t)) \nu (dt)
	%\end{equation}
	% (see Proposition 1.1.3 in \cite{N}).
	%and
	%\begin{align}
	%&\mathbf{E}[\langle DF,DG\rangle _H^2] \nonumber\\
	%& = \sum _{r=1}^{n\wedge m} \frac{(n!m!)}{(n-r)!\,(m-r)!\,(r-1)!} \| f\widetilde \otimes _r g\| _{H^{\odot (n+m-2r)}}^2 \label{eq:DFDG1}
	%\end{align}
	%where $f\widetilde \otimes _r g$ means the symmetrization of  $f\otimes _r g$ (see Lemma 2 in \cite{NuOr}).

\end{document}